\newcommand{\ee}{\varepsilon}
\newcommand{\N}{{\mathbb N}}
\newcommand{\R}{{\mathbb R}}
\newtheorem{thm}{Theorem}[section]
\newtheorem{lem}[thm]{Lemma}
\newtheorem{cor}[thm]{Corollary}
\newtheorem{prop}[thm]{Proposition}
\newtheorem{ex}[thm]{Example}
\newtheorem{prob}[thm]{Problem}
\newcommand{\relint}{{\mathrm{relint}}\,}
\newcommand{\cl}{{\mathrm{cl}}\,}
\newcommand{\conv}{{\mathrm{conv}}\,}
\newcommand{\lin}{{\mathrm{lin}}\,}
\newcommand{\di}{\diamondsuit}
\begin{document}
\hfill\today
\bigskip

\title{Symmetrization in geometry}
\author[Gabriele Bianchi, Richard J. Gardner, and Paolo Gronchi]
{Gabriele Bianchi, Richard J. Gardner, and Paolo Gronchi}
\address{Dipartimento di Matematica e Informatica ``U. Dini", Universit\`a di Firenze, Viale Morgagni 67/A, Firenze, Italy I-50134} \email{gabriele.bianchi@unifi.it}
\address{Department of Mathematics, Western Washington University,
Bellingham, WA 98225-9063, United States} \email{richard.gardner@wwu.edu}
\address{Dipartimento di Matematica e Informatica ``U. Dini", Universit\`a di Firenze, Piazza Ghiberti 27, Firenze, Italy I-50122} \email{paolo.gronchi@unifi.it}
\thanks{First and third author supported in part by the Gruppo
Nazionale per l'Analisi Matematica, la Probabilit\`a e le loro
Applicazioni (GNAMPA) of the Istituto Nazionale di Alta Matematica (INdAM).  Second author supported in
part by U.S.~National Science Foundation Grant DMS-1402929.}
\subjclass[2010]{Primary: 52A20, 52A39; secondary: 28B20, 52A38, 52A40} \keywords{convex body, Steiner symmetrization, Schwarz symmetrization, Minkowski symmetrization, central symmetrization}


\begin{abstract}
The concept of an $i$-symmetrization is introduced, which provides a convenient framework for most of the familiar symmetrization processes on convex sets.  Various properties of $i$-symmetrizations are introduced and the relations between them investigated. New expressions are provided for the Steiner and Minkowski symmetrals of a compact convex set which exhibit a dual relationship between them.  Characterizations of Steiner, Minkowski and central symmetrization, in terms of natural properties that they enjoy, are given and examples are provided to show that none of the assumptions made can be dropped or significantly weakened. Other familiar symmetrizations, such as Schwarz symmetrization, are discussed and several new ones introduced.
\end{abstract}

\maketitle

\section{Introduction}

Around 1836, Jakob Steiner introduced the process now known as Steiner symmetrization in attempting to prove the isoperimetric inequality.  His proof was incomplete, since he assumed the existence of the extremum, but a standard modern approach (see, for example, \cite[Chapter~9]{Gru07}) is still based on Steiner symmetrization.  Indeed, Steiner symmetrization remains an extremely potent technique in geometry, where it has found frequent use, for instance in the demonstration of a variety of powerful affine isoperimetric inequalities.  See, for example, \cite[Chapter~9]{Gar06}, \cite[Chapter~9]{Gru07}, \cite[Chapter~10]{Sch93}, and the references given there.  Beyond geometry, Steiner symmetrization plays an important role in several areas of mathematics, particularly analysis and PDEs.  The latter development was stimulated by the appearance of the classic text of P\'{o}lya and Szeg\"{o} \cite{PS}, which inspired a huge number of works.  See, for example,
\cite{B80, F00, Hay, Hen, Kaw, K06, KP, LL, Str}, and the references given in these texts.

Despite the vast literature surrounding Steiner symmetrization and its applications, we are not aware of a characterization of it, and one purpose of this paper is to provide some.  We also formulate a general framework for many symmetrizations: For $i\in \{0,\dots,n-1\}$ and an $i$-dimensional subspace $H$ in $\R^n$, we call a map $\di$, from a class $\mathcal{B}$ of nonempty compact sets in $\R^n$ to the subclass $\mathcal{B}_H$ of members of $\mathcal{B}$ that are $H$-symmetric (i.e., symmetric with respect to $H$), an {\em $i$-symmetrization} on $\mathcal{B}$.  With this terminology in place, we show that {\em Steiner symmetrization is the unique $(n-1)$-symmetrization on convex bodies in $\R^n$, $n\ge 2$, that is monotonic, volume preserving, and either invariant on $H$-symmetric spherical cylinders or projection invariant.}  (See Section~\ref{symm} for the definitions of these properties and Section~\ref{subsec:notations} for basic terminology and notation.)  The version assuming invariance on $H$-symmetric spherical cylinders is a consequence of a result we prove for Steiner symmetrization on compact sets in $\R^n$, $n\ge 2$.  Examples are given that suggest that the familiar generalization of Steiner symmetrization called Schwarz symmetrization may be difficult to classify in a nontrivial manner.

Another process familiar in geometry is now usually called Minkowski symmetrization, despite being introduced by Blaschke (see \cite[p.~174]{Gru07} and \cite[p.~181]{Sch93}), because up to a scaling factor it involves taking the Minkowski sum of a set and its reflection in a subspace.  The significance of the Minkowski symmetral of a compact convex set stems partly from the fact that it contains the Steiner symmetral of the set.  This relationship has been found particularly useful in studying the convergence of successive Steiner symmetrals.  See, for example, \cite{CouD14}, \cite[Chapter~9]{Gru07}, \cite[Notes for Sections~3.3 and 10.3]{Sch93}, and the references there for the many deep results on this topic by various authors.  We prove that {\em Minkowski symmetrization is the unique $(n-1)$-symmetrization on convex bodies (or on compact convex sets) in $\R^n$, $n\ge 2$, that is monotonic, mean width preserving, and either invariant on $H$-symmetric spherical cylinders or projection invariant.}

The paper is structured as follows. In Section~\ref{symm}, as well as introducing $i$-symmetrizations, we define the eight main properties of them that we find the most useful and indicate which of them are or are not enjoyed by the main known symmetrizations, namely, $p$th central symmetrization and Steiner, Schwarz, Minkowski, Minkowski-Blaschke, and Blaschke symmetrizations.  To this list we add another, fiber symmetrization, which includes Steiner and Minkowski symmetrization as special cases. We regard this as having essentially been introduced by McMullen \cite{McM}, though we substantially generalize the concept.  In Theorem~\ref{Blassym} we prove that when $n\geq3$ and $i\ge 1$, Blaschke symmetrization is not projection invariant.

Section~\ref{MS}, on projection covariant symmetrizations, may be regarded as a sequel to the investigation of Gardner, Hug, and Weil \cite{GHW, GHW2} into additions, such as Minkowski and $L_p$ addition, in convex geometry.  In fact, certain symmetrizations, such as central symmetrization, result from adding a set to its reflection in the origin.  Such procedures form a subclass of the $0$-symmetrizations (i.e., $i$-symmetrizations with $i=0$), and \cite[Section~8]{GHW} contains several results classifying members of this subclass.  In particular, \cite[Corollary~8.4]{GHW} (see also \cite{AG}) classifies central symmetrization, defined by (\ref{csdeff}) below.  Another new symmetrization, $M$-symmetrization, is introduced in Section~\ref{MS} and employs the notion of $M$-addition studied in \cite{GHW, GHW2}. There are many other symmetrization processes in geometry, such as those leading to the fundamental notions of projection body, intersection body, and centroid body (see \cite{Gar06, Sch93}).  These are examples of $0$-symmetrizations not covered by the results in \cite{GHW} or the present paper.  However, characterizations of these and related bodies have been obtained using valuation theory; see, for example \cite{L1, L3}.

Two more natural generalizations of Steiner and Minkowski symmetrization, that we call the inner and outer rotational symmetrizations, are defined in Section~\ref{further}, along with several others that are useful in showing that the properties we assume in our results cannot be omitted.

Section~\ref{relations} examines how the various properties of $i$-symmetrizations relate to each other.  A significant role is played by monotonicity and idempotence, the natural property that repeating the symmetrization with respect to the same subspace has no effect.  In Theorem~\ref{Newchar}, we also obtain new expressions for the Steiner and Minkowski symmetrals of a compact convex set that bring to light the dual relationship between them. The latter of these expressions is used in obtaining our characterization of Minkowski symmetrization mentioned above. Corollary~\ref{IdemGab} gives properties which ensure that an $i$-symmetral of a compact convex set contains its fiber symmetral and is contained in its Minkowski symmetral.  We also find properties which ensure that an $i$-symmetral of a compact convex set contains its inner rotational symmetral and is contained in its outer rotational symmetral; see Theorem~\ref{IdemGab2}.  Such results lead to others concerning the convergence of successive $i$-symmetrals.  For example, Corollary~\ref{Successivecor} gives sufficient conditions which guarantee that successive symmetrizations of a convex body will converge to a ball. No attempt is made in the short Section~\ref{Convergence} to obtain the best results of this type; the topic will be thoroughly investigated in a future paper.

Two characterizations of Minkowski symmetrization are given in Section~\ref{MinS}, the highlight being Theorem~\ref{MinChar}(i), the one mentioned earlier.  This follows from Theorem~\ref{SchwarzF}, which provides conditions (different to those in Corollary~\ref{IdemGab}) under which the $(n-1)$-symmetral of a compact convex set is contained in its Minkowski symmetral.  In Theorem~\ref{MinChar}(iii), we also obtain a new characterization of the central symmetral.

Section~\ref{SS} focuses on Steiner symmetrization, both on compact sets and on compact convex sets or convex bodies.  Among other results, the characterization of Steiner symmetrization on convex bodies referred to above may be found in Corollary~\ref{Steiner}.

Throughout the paper, we attempt to provide examples which show that none of our assumptions can be dropped or significantly weakened and succeed in this endeavor when $i=n-1$ (in particular, for the main results mentioned above) and with just three exceptions otherwise.  The final Section~\ref{problems} lists the corresponding open problems.  Here too we pose the intriguing Problem~\ref{prob0}, part of which asks whether there is a symmetrization on compact convex sets, which like Minkowski and Steiner symmetrization is monotonic and either invariant on $H$-symmetric spherical cylinders or projection invariant, but which preserves surface area instead of mean width or volume.

We thank a referee for a very thorough reading that led to many improvements.

\section{Preliminaries}\label{subsec:notations}

As usual, $S^{n-1}$ denotes the unit sphere and $o$ the origin in Euclidean $n$-space $\R^n$.  We assume throughout that $n\ge 2$.   The standard orthonormal basis for $\R^n$ is $\{e_1,\dots,e_n\}$.  The unit ball in $\R^n$ will be denoted by $B^n$. If $x,y\in \R^n$ we write $x\cdot y$ for the inner product and $[x,y]$ for the line segment with endpoints $x$ and $y$. If $x\in \R^n\setminus\{o\}$, then $x^{\perp}$ is the $(n-1)$-dimensional subspace orthogonal to $x$. Throughout the paper, the term {\em subspace} means a linear subspace.

If $X$ is a set,  we denote by $\lin X$, $\conv X$, $\cl X$, $\relint X$, and $\dim X$ the {\it linear hull}, {\it convex hull}, {\it closure}, {\it relative interior}, and {\it dimension} (that is, the dimension of the affine hull) of $X$, respectively.  If $H$ is a subspace of $\R^n$, then $X|H$ is the (orthogonal) projection of $X$ on $H$ and $x|H$ is the projection of a vector $x\in \R^n$ on $H$.

If $X$ and $Y$ are sets in $\R^n$ and $t\ge 0$, then $tX=\{tx:x\in X\}$ and
$$X+Y=\{x+y: x\in X, y\in Y\}$$
denotes the {\em Minkowski sum} of $X$ and $Y$.

When $H$ is a fixed subspace of $\R^n$, we use $X^{\dagger}$ for the {\em reflection} of $X$ in $H$, i.e., the image of $X$ under the map that takes $x\in \R^n$ to $2(x|H)-x$.  If $X^{\dagger}=X$, we say $X$ is {\em $H$-symmetric}. If $H=\{o\}$, we instead write $-X=(-1)X$ for the reflection of $X$ in the origin and {\it $o$-symmetric} for $\{o\}$-symmetric. A set $X$ is called {\em rotationally symmetric} with respect to $H$ if for all $x\in H$, $X\cap (H^{\perp}+x)=r_x(B^n\cap H^{\perp})+x$ for some $r_x\ge 0$.  If $\dim H=n-1$, then a compact convex set is rotationally symmetric with respect to $H$ if and only if it is $H$-symmetric.  The term {\em $H$-symmetric spherical cylinder} will always mean a set of the form $D_r(x)+s(B^n\cap H^{\perp})=D_r(x)\times s(B^n\cap H^{\perp})$, where $s>0$ and $D_r(x)\subset H$ is the ball with $\dim D=\dim H$, center $x$, and radius $r>0$.  Of course, $H$-symmetric spherical cylinders are rotationally symmetric with respect to both $H$ and $H^{\perp}$.

The phrase {\em translate orthogonal to $H$} means translate by a vector in $H^{\perp}$.

We write ${\mathcal H}^k$ for $k$-dimensional Hausdorff measure in $\R^n$, where $k\in\{1,\dots, n\}$. The notation $dz$  means integration with respect to ${\mathcal H}^k$ for the appropriate $k$.

The Grassmannian of $k$-dimensional subspaces in $\R^n$ is denoted by ${\mathcal{G}}(n,k)$.

We denote by ${\mathcal C}^n$ the class of nonempty compact subsets of $\R^n$.
Let ${\mathcal K}^n$ be the class of nonempty compact convex
subsets of $\R^n$ and let ${\mathcal K}^n_n$ be the class of {\em convex bodies}, i.e., members of ${\mathcal K}^n$ with interior points.  A subscript $s$ denotes the $o$-symmetric sets in these classes.  If $K\in {\mathcal K}^n$, then
$$
h_K(x)=\sup\{x\cdot y: y\in K\},
$$
for $x\in\R^n$, defines the {\it support function} $h_K$ of $K$.  The texts by Gr\"{u}ber \cite{Gru07} and Schneider \cite{Sch93} contain a wealth of useful information about convex sets and related concepts such as the {\em intrinsic volumes} $V_j$, $j\in\{1,\dots, n\}$ (see also \cite[Appendix~A]{Gar06}).  In particular, if $K\in {\mathcal K}^n$, then $V_1(K)$ and $V_{n-1}(K)$ are (up to constants independent of $K$) the {\em mean width} and {\em surface area} of $K$, respectively. If
$\dim K=k$, then $V_k(K)={\mathcal H}^k(K)$ and in this case we prefer to write $V_k(K)$.  By $\kappa_n$ we denote the volume $V_n(B^n)$ of the unit ball in $\R^n$.

If $M$ is an arbitrary subset of $\R^2$, we define the {\em $M$-sum} $K\oplus_M L$ of arbitrary sets $K$ and $L$ in $\R^n$ by
\begin{equation}\label{Mdef}
K\oplus_M L= \{ a x + b y : x\in K, y\in L, (a,b )\in M\}.
\end{equation}
See \cite{GHW} and \cite{Mes} for more information and historical remarks concerning $M$-addition.

Let $H\in {\mathcal{G}}(n,i)$, $i\in\{0,\dots,n\}$.  If $p\in \R^n$, write $p=(x,y)$, where $x\in H$ and $y\in H^{\perp}$ satisfy $p=x+y$.  Suppose that $s,t\in \R$ and $K,L\in {\mathcal K}^n$.  The {\em fiber combination} $(s\circ K)\nplus_H (t\circ L)$ of $K$ and $L$ relative to $H$, defined by
\begin{equation}\label{Fiberdef}
(s\circ K)\nplus_H (t\circ L)= \{(x,sy+tz) : (x,y)\in K, (x,z)\in L\},
\end{equation}
was introduced by McMullen \cite{McM}, who noted that $(s\circ K)\nplus_H (t\circ L)\in {\mathcal K}^n$,  $(s\circ K)\nplus_H (t\circ L)=sK+tL$ if $i=0$, and $K\nplus_H L=K\cap L$ if $i=n$.  (We have adapted the definition in \cite{McM} to suit our purposes.)

\section{$i$-Symmetrization: Properties and known examples}\label{symm}

Let $i\in\{0,\dots,n-1\}$ and let $H\in {\mathcal{G}}(n,i)$ be fixed. Let $\mathcal{B}\subset {\mathcal{C}}^n$ be a class of nonempty compact sets in $\R^n$ and let $\mathcal{B}_H$ denote the subclass of members of $\mathcal{B}$ that are $H$-symmetric. We call a map ${\di}:{\mathcal{B}}\rightarrow{\mathcal{B}}_H$ an {\em $i$-symmetrization} on $\mathcal{B}$ (with respect to $H$).  If $K\in {\mathcal{B}}$, the corresponding set ${\di}K$ is called a {\em symmetral}.  We consider the following properties, where it is assumed that the class ${\mathcal{B}}$ is appropriate for the properties concerned and that they hold for all $K,L\in {\mathcal{B}}$.  Recall that $K^{\dagger}$ is the reflection of $K$ in $H$.

\smallskip

1. ({\em Monotonicity} or {\em strict monotonicity}) \quad $K\subset L \Rightarrow \di K\subset \di L$ (or $\di K\subset \di L$ and $K\neq L \Rightarrow \di K\neq \di L$, respectively).

2. ({\em $F$-preserving}) \quad $F(\di K)=F(K)$, where $F:{\mathcal{B}}\to [0,\infty)$ is a set function.  In particular, we can take $F=V_j$, $j=1,\dots,n$, the $j$th intrinsic volume, though we generally prefer to write {\em mean width preserving}, {\em surface area preserving}, and {\em volume preserving}  when $j=1$, $n-1$, and $n$, respectively.

3. ({\em Idempotent}) \quad $\di^2 K=\di(\di K)=\di K$.

4. ({\em Invariance on $H$-symmetric sets})\quad $K^{\dagger}=K\Rightarrow\di K=K$.

5. ({\em Invariance on $H$-symmetric spherical cylinders})\quad If $K=D_r(x)+s(B^n\cap H^{\perp})$, where $s>0$ and $D_r(x)\subset H$ is the $i$-dimensional ball with center $x$ and radius $r>0$, then $\di K=K$.

6. ({\em Projection invariance}) \quad $(\di K)|H=K|H$.

7. ({\em Invariance under translations orthogonal to $H$ of $H$-symmetric sets}) \quad If $K$ is $H$-symmetric and $y\in H^{\perp}$, then $\di(K+y)=\di K$.

8. ({\em Projection covariance}) \quad  $(\di K)|T=\di(K|T)$ for all nontrivial subspaces $T$ contained in $H^{\perp}$.

\smallskip

Invariance under translations orthogonal to $H$ of $H$-symmetric sets is satisfied by most natural symmetrizations, so in discussing examples we shall only mention this property when it does not hold.

Clearly invariance on $H$-symmetric sets implies invariance on $H$-symmetric spherical cylinders and idempotence.  Other less obvious relations between the eight properties are proved in Theorems~\ref{Invariance} and \ref{Gab3}.   Projection invariance and projection covariance are really only relevant when ${\mathcal{B}}\subset {\mathcal{K}}^n$. Other useful properties are considered in \cite{GHW} but will not be needed here.

Two special cases are of particular importance: $i=0$ and $i=n-1$.

If $i=0$, then $H=\{o\}$ and $0$-symmetrization is the same as the $o$-symmetrization discussed in \cite{GHW}.  Note that in this case, the definition of projection covariance above is consistent with that used in \cite{GHW}, since $H^{\perp}=\R^n$ and then (8) is equivalent to $(\di K)|T=\di(K|T)$ for all $T\in {\mathcal{G}}(n,j)$, $j\in \{1,\dots,n-1\}$.  Also, when $i=0$, the projection invariance property is trivially satisfied.

There are several useful examples of $0$-symmetrization, such as
{\em $p$th central symmetrization}, given for $K\in {\mathcal{K}}^n$ and $p\ge 1$ by
$$\di K=\Delta_p K=\left(2^{-1/p}K\right)+_p\left(2^{-1/p}(-K)\right).$$
Here $+_p$ denotes the general $L_p$ addition introduced in \cite{LYZ} (see also \cite[Example~6.7]{GHW} for an alternative approach), and the nontrivial fact that $\Delta_p K\in {\mathcal{K}}^n_s$ (in particular the convexity of this set) for $K\in {\mathcal{K}}^n$ follows from \cite[Theorem~5.1]{GHW}.  The $p$th central symmetrization is strictly monotonic, invariant on $H$-symmetric sets, and  projection covariant, as is easily verified using the fact that the operation $+_p$ is projection covariant as defined in Section~\ref{MS} below.   By Firey's inequality \cite[(78), p.~394]{Gar02}, the $j$th intrinsic volume is generally increased (meaning not decreased and not always preserved) by $\Delta_p$ for $j\in\{1,\dots,n\}$, except when $p=1$, in which case it is mean width preserving.  Except when $p=1$, $\Delta_p$ is not invariant under translations orthogonal to $H$ of $H$-symmetric sets.  When $p=1$, the subscript is dropped and
\begin{equation}\label{csdeff}
\di K=\Delta K=\frac{1}{2}K+\frac{1}{2}(-K)
\end{equation}
defines {\em central symmetrization} (see \cite[p.~106]{Gar06}).  The central symmetral $\Delta K$ differs from the ubiquitous {\em difference body} $DK=K+(-K)$ only by a dilatation factor of $1/2$.  There are many other important $o$-symmetrizations in convex geometry, for example, the projection body, intersection body, and centroid body operators, usually denoted by $\Pi K$, $IK$, and $\Gamma K$, respectively.  See, for example, \cite{Gar06} and \cite[Chapter~10]{Sch93}.

The other case of particular importance is $i=n-1$.  The prime example of an $(n-1)$-symmetrization is {\em Steiner symmetrization}. If $K\in {\mathcal{K}}^n$, the {\em Steiner symmetral} of $K$ with respect to $H\in {\mathcal{G}}(n,n-1)$ is the set $S_HK$ such that for each line $G$ orthogonal to $H$ and meeting $K$, the set $G\cap S_HK$ is a (possibly degenerate) closed line segment with midpoint in $H$ and length equal to that of $G\cap K$.  In this definition we have followed \cite[p.~536]{Sch93}, where the same definition is used for compact sets $K$, with length replaced by ${\mathcal{H}}^1$-measure; see also \cite[p.~62]{Gar06} and \cite[p.~169]{Gru07}.   On ${\mathcal{K}}^n$, Steiner symmetrization is strictly monotonic, volume preserving, invariant on $H$-symmetric sets, and projection invariant.  However, for $j\in \{1,\dots,n-1\}$, it generally reduces the $j$th intrinsic volume $V_j$ (see \cite[Satz~XI, p.~260]{Had57} or \cite[p.~587]{Sch93}), and it is not projection covariant, since the containment $(S_HK)|H^{\perp}\subset S_H(K|H^{\perp})$ is in general proper.

As an example of $i$-symmetrization, $i\in \{1,\dots,n-2\}$, we recall that when $K\in {\mathcal{K}}^n$, the {\em Schwarz symmetral} of $K$ with respect to $H\in {\mathcal{G}}(n,i)$ is the set $S_HK$ such that for each $(n-i)$-dimensional plane $G$ orthogonal to $H$ and meeting $K$, the set $G\cap S_HK$ is a (possibly degenerate) $(n-i)$-dimensional closed ball with center in $H$ and $(n-i)$-dimensional volume equal to that of $G\cap K$.  See \cite[p.~62]{Gar06} and also \cite[p.~178]{Gru07} (where the process is referred to as Schwarz rounding).  It is convenient to use the same notation for Steiner and Schwarz symmetrizations. On ${\mathcal{K}}^n$, Schwarz symmetrization is monotonic, volume preserving, idempotent, invariant on $H$-symmetric spherical cylinders, and projection invariant, but not strictly monotonic, invariant on $H$-symmetric sets, or projection covariant. (On  ${\mathcal{K}}^n_n$, Schwarz symmetrization is strictly monotonic.) Since it can be viewed as a limit of a sequence of Steiner symmetrizations, Schwarz symmetrization also generally reduces the $j$th intrinsic volume $V_j$ for $j\in \{1,\dots,n-1\}$.

We shall consider {\em Minkowski symmetrization} in the following general form.  Let $i\in \{0,\dots,n-1\}$ and let $H\in {\mathcal{G}}(n,i)$. The {\em Minkowski symmetral} of $K\in {\mathcal{K}}^n$ is defined by
\begin{equation}\label{Minks}
M_HK=\frac12 K+\frac12 K^{\dagger},
\end{equation}
where $K^{\dagger}$ is the reflection of $K$ in $H$. (Note that the case $i=0$ corresponds to $K^{\dagger}=-K$ and $M_HK=\Delta K$, the central symmetral.)  Minkowski symmetrization is strictly monotonic, and, since Minkowski addition commutes with projections, $K^{\dagger}|H=K|H$, and $K^{\dagger}|T=(K|T)^{\dagger}$ for all subspaces $T\subset H^{\perp}$, it is projection invariant and projection covariant.  It is clearly also invariant on $H$-symmetric sets.  Since the first intrinsic volume $V_1$ is linear with respect to Minkowski addition, $M_H$ is mean width preserving, but for $j\in\{2,\dots,n\}$, it generally increases the $j$th intrinsic volume $V_j$, by the Brunn-Minkowski inequality for quermassintegrals \cite[(74), p.~393]{Gar02} (see also \cite[Satz~XI, p.~260]{Had57}).

There is an extension of Minkowski symmetrization analogous to Schwarz symmetrization that we shall call {\em Minkowski-Blaschke symmetrization}, though it has been referred to by other names. For example, Bonnesen and Fenchel \cite[pp.~79--80]{BF87} call it stiffening and attribute it to Blaschke \cite[p.~137]{Bla49}.  If $i\in \{1,\dots,n-2\}$ and $H\in {\mathcal{G}}(n,i)$, the support function $h_K(u)$ of $K\in {\mathcal{K}}^n$ at a point $u\in S^{n-1}$ is replaced by the average of $h_K$ over the subsphere of $S^{n-1}$ orthogonal to $H$ and containing $u$.  More precisely, if $\overline{M}_HK$ denotes the Minkowski-Blaschke symmetral of $K$ and $u\in (H^{\perp}+x)\cap S^{n-1}$ for some $x\in H$, then
$$
h_{\overline{M}_HK}(u)=\frac{1}{(n-i)\kappa_{n-i}}
\int_{(H^{\perp}+x)\cap S^{n-1}}h_K(v)\,dv.
$$
One can check that $\overline{M}_HK\in {\mathcal{K}}^n$ and $\overline{M}_HK$ is rotationally symmetric with respect to $H$.  Minkowski-Blaschke symmetrization is strictly monotonic, mean width preserving (as can be shown by integration in spherical coordinates), idempotent, invariant on $H$-symmetric spherical cylinders, and projection invariant, but it is not invariant on $H$-symmetric sets or projection covariant.

If $H\in {\mathcal{G}}(n,i)$, $i\in \{0,\dots,n-1\}$, we define the {\em fiber symmetral} $F_HK$ of $K\in {\mathcal{K}}^n$ with respect to $H$ by
\begin{equation}\label{Fiber}
F_HK=\left(\frac12\circ K\right)\nplus_H \left(\frac12 \circ K^{\dagger}\right),
\end{equation}
where the fiber combination is defined by (\ref{Fiberdef}).  Then $F_HK=M_HK=\triangle K$ when $i=0$ and $F_HK=S_HK$ when $i=n-1$.  The latter was observed by McMullen \cite{McM} and for this reason we regard the fiber symmetral as known, though the general definition (\ref{Fiber}) does not appear in \cite{McM}.

Observing that $F_HK$ is the compact convex set whose sections orthogonal to $H$ are the Minkowski symmetrals of the corresponding sections of $K$, we take the opportunity to generalize McMullen's construction, as follows. Let $H\in {\mathcal{G}}(n,i)$, $i\in \{0,\dots,n-1\}$, and let $G\in {\mathcal{G}}(n,j)$, $j\in \{0,\dots,i\}$, be contained in $H$.  Let $K\in {\mathcal{K}}^n$ and dollowedefine $F_{H,G}K$ by
\begin{eqnarray}\label{fhjk}
F_{H,G}K&=&\bigcup_{x\in G}\left(\frac12 (K\cap (G^{\perp}+x))+\frac12(K\cap (G^{\perp}+x))^{\dagger}\right)\nonumber\\
&=&\bigcup_{x\in G}\left(\frac12 (K\cap (G^{\perp}+x))+\frac12(K^{\dagger}\cap (G^{\perp}+x))\right),
\end{eqnarray}
where $\dagger$ denotes reflection in $H$, as usual. Using (\ref{fhjk}), it is straightforward to show that $F_{H,G}K\in {\mathcal{K}}^n$ and that $F_{H,G}K$ is $H$-symmetric.  Note that $F_HK=F_{H,H}K$.  We shall use the same term, fiber symmetrization, for the map that takes $K$ to $F_{H,G}K$.  Fiber symmetrization is strictly monotonic, invariant on $H$-symmetric sets, and projection invariant, but when $i\in \{1,\dots,n-1\}$ it is not  projection covariant, since the containment $(F_{H,G}K)|H^{\perp}\subset F_{H,G}(K|H^{\perp})$ is in general proper.

Finally, for $H\in {\mathcal{G}}(n,i)$, $i\in \{0,\dots,n-1\}$, we can define the {\em Blaschke symmetral} of $K\in {\mathcal{K}}^n_n$ by
$$
B_HK=\left(2^{-1/(n-1)} K\right)\,\sharp\,\left(2^{-1/(n-1)} K^{\dagger}\right).
$$
Here $\sharp$ denotes Blaschke addition and $K\,\sharp\,L$ is a convex body such that the surface area measures satisfy $S(K\,\sharp\,L,\cdot)=S(K,\cdot)+S(L,\cdot)$. Thus we may equivalently define $B_HK$ by
\begin{equation}\label{Blsym}
S(B_HK,\cdot)=\frac12 S(K,\cdot)+\frac12 S(K^{\dagger},\cdot).
\end{equation}
When $i=0$, we have $K^{\dagger}=-K$ and then the body $B_HK$ is often called the {\em Blaschke body} of $K$ and denoted by $\nabla K$; see, for example, \cite[p.~116]{Gar06}.  Of course, (\ref{Blsym}) only defines $B_HK$ up to translation; see \cite{GPS} for a discussion about the positions for the Blaschke sum chosen in the literature.  We define the Blaschke sum so that the centroids of $B_HK$ and $K|H$ coincide, in which case $B_H$ is invariant on $H$-symmetric sets.  When $n=2$, then up to translation and on ${\mathcal{K}}^n_n$, $B_H$ coincides with $\Delta$ ($i=0$) or $M_H$ ($i=1$), whose properties have already been discussed.  When $n\geq3$, this is not the case and $B_H$ is neither monotonic nor (except when $i=0$) projection invariant, regardless of the position chosen for the Blaschke sum, as we show in the next theorem.  Also, projection covariance is not defined since the domain of $B_H$ is ${\mathcal{K}}^n_n$.   Blaschke symmetrization preserves surface area. It is a consequence of the Kneser-S\"{u}ss inequality \cite[(B.32), p.~423]{Gar06}, \cite[Theorem~8.2.3]{Sch93} that Blaschke symmetrization generally increases volume.

\begin{thm}\label{Blassym}
Let $H\in {\mathcal{G}}(n,i)$, $i\in\{0,\dots,n-1\}$.  If $i=0$, Blaschke symmetrization $B_H$ in $\R^n$, $n\ge 3$, is not monotonic and if $i\in\{1,\dots,n-1\}$, it is not projection invariant (and therefore, by Theorem~\ref{Invariance} below, also not monotonic).
\end{thm}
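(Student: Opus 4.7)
The plan is to treat the cases $i=0$ and $i\in\{1,\dots,n-1\}$ via explicit polytopal counterexamples. The main tool in both cases is the defining identity
$$
S(B_HK,\cdot) = \frac{1}{2} S(K,\cdot) + \frac{1}{2} S(K^{\dagger},\cdot)
$$
together with Minkowski's existence theorem, which reconstructs $B_HK$ uniquely, up to translation, from this averaged surface area measure; the translation is then pinned down by the centering convention stated just before the theorem. For polytopal $K$ both $S(K,\cdot)$ and $S(K^{\dagger},\cdot)$ are finite sums of point masses, so the averaged measure is discrete and $B_HK$ is a polytope whose face normals are prescribed, reducing the computation of its support values to a small explicit system of face-area equations.

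For $i=0$ one has $K^{\dagger}=-K$ and $B_HK=\nabla K$. I would work in $\R^3$ and take $K=\conv\{0,e_1,e_2,e_3\}$, the standard simplex, and $L=\conv(K\cup\{(1,1,1)\})$, so that the face of $K$ with normal $(1,1,1)/\sqrt 3$ is destroyed and replaced in $L$ by three triangular faces with normals $(1,1,-1)/\sqrt 3$, $(1,-1,1)/\sqrt 3$, $(-1,1,1)/\sqrt 3$. Then $\nabla K$ is a centrally symmetric ``clipped cube'' $\{|x_i|\le a,\,|x_1+x_2+x_3|\le a\}$ with $a=1/(2\sqrt 2)$, while $\nabla L$ is the cube $[-1/2,1/2]^3$ with six of its eight corners chopped off by six slanted hyperplanes, both bodies being identified by solving the face-area equations in closed form. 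A direct comparison of support values at a direction such as $u=(-1,1,1)/\sqrt 3$---which is a face normal of $\nabla L$ but not of $\nabla K$---yields $h_{\nabla K}(u)>h_{\nabla L}(u)$, so $\nabla K\not\subset\nabla L$. For $n>3$ one takes the Cartesian product of $K$ and $L$ with a sufficiently large cube in the remaining $n-3$ coordinates; the strict inequality at $u$ persists.

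For $i\in\{1,\dots,n-1\}$ I would exhibit a $K$ with $(B_HK)|H\ne K|H$, whence non-monotonicity follows from Theorem~\ref{Invariance} since $B_H$ trivially satisfies that theorem's remaining hypotheses. Take $K=\conv\{0,e_1,e_2,e_3\}$ in $\R^3$ with $H=\mathrm{span}(e_1)$, so that $K$ is not $H$-symmetric. The averaged surface area measure is a discrete $H$-symmetric measure carried by the face normals of $K$ and of its $H$-reflection, and Minkowski's existence theorem forces $B_HK$ to be an $H$-symmetric polytope with seven faces of prescribed area. Parameterizing it by three support values---at $-e_1$, at the common value on $\pm e_2,\pm e_3$, and at the two slanted normals---the face-area identities pin these values down in closed form; the centering condition then locates $B_HK$ in $\R^3$, and one reads off that $(B_HK)|H$ is a segment along $e_1$ strictly different from $K|H=[0,e_1]$. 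Higher $i$ or $n$ are handled by embedding this tetrahedron in a three-dimensional coordinate subspace of $\R^n$ and taking products with cubes or cylinders aligned with the extra coordinates.

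The main obstacle throughout is that Minkowski's existence theorem is non-constructive, so there is no closed-form formula for support functions in terms of surface area measures. The remedy is the polytopal ansatz above, which forces $B_HK$ (and $B_HL$) into a fixed combinatorial type with only a few support parameters, so that the face-area identities become a small system solvable by hand. The non-trivial step is verifying, in each construction, that the ansatz is consistent (realized by a non-degenerate polytope of the assumed type) and that the key strict inequality of support values, or the mismatch of $H$-projections, survives the product construction used to pass from $n=3$ to general $n\ge 3$.
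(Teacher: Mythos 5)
Your overall strategy---reconstruct the Blaschke symmetral of explicit polytopes from the averaged surface area measure via Minkowski's existence and uniqueness theorem, then exhibit a failure of monotonicity or of projection invariance---is the same strategy the paper uses; the paper simply chooses rotationally symmetric witnesses (a cone, inscribed spherical cylinders, and the prism $[-1/2,1/2]\times T^{n-1}$) so that the computation can be carried out directly in $\R^n$ for every $n\ge 3$ and every $i$ at once. Your two three-dimensional base cases are essentially sound: for $i=0$ one does get $\nabla K=\{x:|x_j|\le a,\ |x_1+x_2+x_3|\le a\}$ with $a=1/(2\sqrt2)$, and the claimed inequality $h_{\nabla K}(u)>h_{\nabla L}(u)$ at $u=(-1,1,1)/\sqrt3$ is true; for $(n,i)=(3,1)$ the seven-facet polytope has width $1/\sqrt2\neq 1$ in the direction $e_1$, so $(B_HK)|H\neq K|H$. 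One asserted closed form is wrong, however: $\nabla L$ is not the truncated cube $[-1/2,1/2]^3$. Writing $\nabla L=\{x:|x_j|\le\alpha,\ |x\cdot v|\le s \text{ for } v=(1,1,-1),(1,-1,1),(-1,1,1)\}$ and $d=s-\alpha$, the face-area equations are $2\alpha d+d^2/2=1/4$ and $\alpha^2-d^2/2=1/4$, which force $\alpha=(1+\sqrt2)d$ and $d^2=1/(10+8\sqrt2)$, so $\alpha\approx 0.523$; with $\alpha=1/2$ the two equations are inconsistent. The conclusion survives the correction, since then $h_{\nabla L}(u)=s/\sqrt3\approx 0.43<\sqrt6/4\approx 0.61$, but as written the ``closed form'' you rely on does not solve the system.

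The genuine gaps are in the passage from $\R^3$ to the full statement. First, the Cartesian-product step is not a formality, because $B_H$ does not commute with products. What is true, and must be proved via Minkowski uniqueness, is that the symmetral of $P\times Q$ ($Q$ a box in the extra coordinates) is a product $tP'\times\mu Q$ of \emph{rescaled} factors, where $P'$ is the three-dimensional symmetral and $t,\mu$ solve $t^2\mu^{n-3}=1$ and $t^3\mu^{n-4}V_3(P')=V_3(P)$; in particular $t$ and $\mu$ depend on the body and are independent of the size of the box, so taking ``a sufficiently large cube'' buys nothing, and the strict support inequality (respectively the width mismatch) must be rechecked after the $K$-dependent and $L$-dependent rescalings. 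It does survive here---for the monotonicity example one needs $(\rho_K/\rho_L)^{(n-3)/(n-1)}>h_{\nabla L}(u)/h_{\nabla K}(u)$ with $\rho=V_3(\cdot)/V_3(\nabla\,\cdot)$, which holds numerically, and for the projection example $t\le 1<\sqrt2$ by Kneser--S\"uss---but none of this appears in your argument, and it is exactly the non-trivial step you yourself flag as unverified. Second, your reduction for $i\ge 1$ only reaches $i\le n-2$: embedding the $(n,i)=(3,1)$ configuration requires two coordinate directions of $H^\perp$ inside the three-dimensional subspace carrying the tetrahedron, so the case $i=n-1$ (in particular $n=3$, $i=2$) is not covered and needs a separate base computation, whereas the paper's single witness $[-1/2,1/2]\times T^{n-1}$ with $K^\dagger=-K$ handles all $i\in\{1,\dots,n-1\}$ simultaneously.
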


\begin{proof}
Let $T^n$ be an $n$-dimensional cone in $\R^n$ with centroid at the origin, $x_n$-axis as its axis, and radius and height (i.e., width in the direction $e_n$) both equal to 1.  Suppose initially that $i=0$.  We claim that when $n\ge 3$, the height of $B_HT^n=\nabla T^n$, the Blaschke body of $T^n$, is less than 1.  Suppose the claim is true. Let $0<s<1$ and let $L_s\subset T^n$ be the spherical cylinder with base of radius $s$ contained in the base of $T^n$, the $x_n$-axis as its axis, and with maximal height $w=w(s)$. The set $L_s$ is centrally symmetric, so $B_HL_s=\nabla L_s$ is a translate of $L_s$ and the height of $\nabla L_s$ is $w$; since $w\to 1$ as $s\to 0$, when $s$ is sufficiently small it is not possible that $\nabla L_s\subset \nabla T^n$.  This proves the result when $i=0$.

To prove the claim, let $n\ge 3$ and recall that the surface area of the curved part of the boundary of an $n$-dimensional cone of radius $r$ and height $h$ is $r^{n-2}\sqrt{h^2+r^2}\kappa_{n-1}$.  Therefore the surface area of the curved part of the boundary of $T^n$ is $\sqrt{2}\kappa_{n-1}$, while the area of the base of $T^n$ is $\kappa_{n-1}$.  The surface area measure $S(T^n,\cdot)$ consists of a point mass at $-e_n$ and a multiple of $(n-2)$-dimensional Lebesgue measure on the $(n-2)$-dimensional sphere of latitude in $S^{n-1}$ whose points have vertical angle $\pi/4$ with the positive $x_n$-axis.  From this and (\ref{Blsym}) it is easy to see that $\nabla T^n$ is an $o$-symmetric truncated double cone of radius $a$, say, with the $x_n$-axis as axis, such that the top of $\nabla T^n$ is an $(n-1)$-dimensional ball $B$ of radius $h$ contained in the plane $\{x_n=a-h\}$, for some $0<h<a$.  By (\ref{Blsym}), $V_{n-1}(B)=\kappa_{n-1}/2$, whence $h=2^{-1/(n-1)}$, and the surface area of the curved part of the boundary of $\nabla T^n$ contained in $\{x_n\ge 0\}$ is $\sqrt{2}\kappa_{n-1}/2$.  From the latter we see that
$$\sqrt{2}a^{n-1}\kappa_{n-1}-\sqrt{2}h^{n-1}\kappa_{n-1}=
\sqrt{2}\kappa_{n-1}/2$$
and hence $a=1$.  Thus the height of $\nabla T^n$ is
$2(a-h)=2(1-2^{-1/(n-1)})$, which is less than 1 when $n\ge 3$.  This proves the claim.

Let $i\in \{1,\dots,n-1\}$ and let $H$ be the subspace of $\R^n$ spanned by $e_1,\dots,e_i$.  Identifying $e_1^{\perp}$ with $\R^{n-1}$ in the natural way, let $T^{n-1}\subset e_1^{\perp}$ be as above, i.e., an $(n-1)$-dimensional cone with the $x_n$-axis as its axis, base an $(n-2)$-dimensional ball of radius 1 with center on the $x_n$-axis, height 1, and centroid at the origin.  Let $K=[-1/2,1/2]\times T^{n-1}$.  The cylinder $K$ has centroid at the origin and is such that $K^{\dagger}$, the reflection of $K$ in $H$, coincides with $-K$, so that $B_HK=\nabla K$.  (When $n=3$, $K$ is a triangular prism with its two triangular facets having edge lengths $\sqrt{2}$, $\sqrt{2}$, and 2.) The set $K$ has four facets: two $(n-1)$-dimensional spherical cylinders with outer unit normals $\pm e_n$ and volume $\kappa_{n-2}$, and two $(n-1)$-dimensional cones with outer unit normals $\pm e_1$ and volume $\kappa_{n-2}/(n-1)$.  By formulas already employed in the case $i=0$, the remaining curved (except when $n=3$) part of the boundary of $K$ in $\{x_n\ge 0\}$ has $(n-1)$-dimensional volume $\sqrt{2}\kappa_{n-2}$.

From considerations similar to those for the case $i=0$, we see that $\nabla K=[-b/2,b/2]\times D$, for some $b>0$, where $D\subset e_1^{\perp}$ is an $o$-symmetric $(n-1)$-dimensional double truncated cone of radius $a$, say, with the $x_n$-axis as its axis, such that the top of $D$ is an $(n-2)$-dimensional ball $B$ of radius $h$ contained in the plane $\{x_n=a-h\}$, for some $0<h<a$. The set $\nabla K$ also has four facets: two $(n-1)$-dimensional spherical cylinders with outer unit normals $\pm e_n$ and volume $h^{n-2}\kappa_{n-2}b$, and two $(n-1)$-dimensional double truncated cones with outer unit normals $\pm e_1$ and volume $2(a^{n-1}-h^{n-1})\kappa_{n-2}/(n-1)$.  (When $n=3$, $\nabla K$ is a hexagonal cylinder with eight facets.)  The remaining curved (except when $n=3$) part of the boundary of $\nabla K$ in $\{x_n\ge 0\}$ has $(n-1)$-dimensional volume $\sqrt{2}(a^{n-2}-h^{n-2})\kappa_{n-2}b$. From these facts and (\ref{Blsym}), we obtain $h^{n-2}b=1/2$, $2(a^{n-1}-h^{n-1})=1$, and $2(a^{n-2}-h^{n-2})b=1$.  Solving these three equations, we find that
$$b=2^{-\frac{1}{n-1}}(2^{\frac{n-1}{n-2}}-1)^{\frac{n-2}{n-1}}.$$
It is not hard to see that $b>1$, which implies that the width of $\nabla K$ in the direction $e_1$ is greater than that of $K$.  It follows that $(\nabla K)|H \neq K|H$.
\end{proof}

It is not possible to generalize the definitions of $M_H$ and $B_H$ in a straightforward way to obtain $V_j$-preserving symmetrizations for $j\in \{2,\dots,n-2\}$.  Indeed, by \cite[Theorem~3.1]{Sch94}, if $j\in \{2,\dots,n-2\}$ and $K$ is an $n$-dimensional convex polytope in $\R^n$ such that $K$ and $K^{\dagger}$ do not have non-trivial faces contained in parallel hyperplanes, then the sum $S_j(K,\cdot)+S_j(K^{\dagger},\cdot)$ of the $j$th area measures of $K$ and $K^{\dagger}$ is not the $j$th area measure of a compact convex set.  In this connection, see also Problem~\ref{prob0}.

Table~\ref{newtable} summarizes the properties of the symmetrizations discussed in this section.

\begin{table}
\begin{center}
\begin{tabular}{|c|c l|c|c|c|c|c|c|c|c|} \hline
 & & &\rotatebox[origin=c]{90}{Monotonic} & \rotatebox[origin=c]{90}{$V_j$-preserving} & \rotatebox[origin=c]{90}{Idempotent} & \rotatebox[origin=c]{90} {Inv. $H$-sym. sets} & \rotatebox[origin=c]{90}{Inv.~$H$-sym.~sph.~cyl.} & \rotatebox[origin=c]{90}{Projection inv.} & \rotatebox[origin=c]{90}{\ Inv. translations\ } & \rotatebox[origin=c]{90}{Projection cov.} \\
Name & Symbol & &1&2&3&4&5&6&7&8 \\ \hline
Central & $\Delta$ & & s\checkmark & $V_1$ & \checkmark & \checkmark & \checkmark & \checkmark & \checkmark & \checkmark \\ \hline
$p$th Central &$\Delta_p$, & $p>1$ & s\checkmark & \small{\ding{53}} & \checkmark & \checkmark & \checkmark & \checkmark & \small{\ding{53}} & \checkmark \\ \hline
Steiner &$S_H$, & $i=n-1$ & s\checkmark & $V_n$ & \checkmark & \checkmark & \checkmark & \checkmark & \checkmark & \small{\ding{53}} \\ \hline
Schwarz & $S_H$, & $1\le i\le n-2$ & \checkmark & $V_n$ & \checkmark & \small{\ding{53}} & \checkmark & \checkmark & \checkmark & \small{\ding{53}} \\ \hline
Minkowski & $M_H$ &  & s\checkmark & $V_1$ & \checkmark & \checkmark & \checkmark & \checkmark & \checkmark & \checkmark \\ \hline
Minkowski-Blaschke & $\overline{M}_H$, & $1\leq i\leq n-2$ & s\checkmark & $V_1$ & \checkmark & \small{\ding{53}} & \checkmark & \checkmark & \checkmark & \small{\ding{53}} \\ \hline
Fiber & $F_{H}$, & $1\leq i\leq n-2$ & s\checkmark & \small{\ding{53}} & \checkmark & \checkmark & \checkmark & \checkmark & \checkmark & \small{\ding{53}} \\ \hline
Blaschke & $B_H$, & $n\geq 3$ & \small{\ding{53}} & $V_{n-1}$ & \checkmark & \checkmark & \checkmark & \small{\ding{53}} & \checkmark & \small{\ding{53}} \\ \hline
\end{tabular}
\end{center}
\vspace{.2in}
\caption{Properties, numbered as in Section~\ref{symm}, of previously known symmetrizations,  where s\checkmark indicates strictly monotonic.}
\label{newtable}
\end{table}

\section{Projection covariant symmetrizations and $M$-symmetrization}\label{MS}

In this section we discuss projection covariant symmetrizations and introduce a further process that we call $M$-symmetrization.  We shall need a little terminology from \cite{GHW}. A binary operation $*:\left({\mathcal{K}}^n\right)^2\rightarrow {\mathcal{K}}^n$ is called {\em  $GL(n)$ covariant} if $\phi(K*L)=\phi K*\phi L$ for each $\phi\in GL(n)$ and all $K,L\in {\mathcal{K}}^n$, and {\em projection covariant} if $(K*L)|T=(K|T)*(L|T)$ for all $K,L\in {{\mathcal{K}}^n}$ and $T\in {\mathcal{G}}(n,j)$, $1\le j\le n-1$.  It is {\em monotonic} if $K\subset K'$ and $L\subset L'$ imply $K*L\subset K'*L'$ and {\em continuous} if $K_m\rightarrow M$ and $L_m\rightarrow N$ imply $K_m*L_m\rightarrow M*N$ in the Hausdorff metric as $m\rightarrow\infty$, where all sets are in ${\mathcal{K}}^n$.

\begin{thm}\label{projM}
Let $i\in \{0,\dots,n-1\}$, let $H\in {\mathcal{G}}(n,i)$, and let $*:\left({\mathcal{K}}^n\right)^2\rightarrow {\mathcal{K}}^n$ be commutative and $GL(n)$ covariant.  For $c>0$,
\begin{equation}\label{dagsym}
\di K=(cK)*(cK^{\dagger})
\end{equation}
for all $K\in {\mathcal{K}}^n$, defines an $i$-symmetrization $\di:{\mathcal{K}}^n\rightarrow {\mathcal{K}}^n_{H}$ which is also monotonic if $*$ is monotonic.  If in addition $*$ is continuous, then $\di$ is projection covariant.  If $*$ satisfies $(cK)*(cK)=K$ for all $K\in{\mathcal{K}}^n$, then $\di$ is invariant on $H$-symmetric sets.
\end{thm}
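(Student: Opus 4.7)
The plan is to verify the four assertions in the stated order: $H$-symmetry of the image, monotonicity, projection covariance, and invariance on $H$-symmetric sets. Only projection covariance requires substantive work.

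For $H$-symmetry and invariance on $H$-symmetric sets, the key observation is that the reflection $\dagger$ in $H$ is an element of $O(n) \subset GL(n)$ and is an involution. Thus $GL(n)$-covariance of $*$ gives
$$\left((cK)*(cK^{\dagger})\right)^{\dagger} = (cK^{\dagger})*(cK),$$
which collapses to $\di K$ by commutativity; hence $\di K \in \mathcal{K}^n_H$. If moreover $K = K^{\dagger}$, then $\di K = (cK)*(cK) = K$ by hypothesis. Monotonicity of $\di$ is immediate from that of $*$ together with $K \subset L \Rightarrow cK \subset cL$ and $cK^{\dagger} \subset cL^{\dagger}$.

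The substantive step is projection covariance of $\di$, and I would split it into two ingredients. First, that $*$ itself is projection covariant, a fact I would invoke from \cite{GHW}. (Should one need a self-contained argument, it is short: approximate the orthogonal projection $\pi_T$ onto $T$ by the invertible linear maps $\pi_T + \varepsilon\,\pi_{T^{\perp}}$, apply $GL(n)$-covariance of $*$ to each of these, and let $\varepsilon \to 0^+$, appealing to continuity of $*$ in the Hausdorff metric.) Second, the geometric identity
$$K^{\dagger}|T = -(K|T) = (K|T)^{\dagger} \quad \text{for every nontrivial } T \subset H^{\perp},$$
which follows because every $x \in T$ has $x|H = 0$, so the map $x \mapsto 2(x|H) - x$ restricts to $-\mathrm{id}$ on $T$; the final $\dagger$ here is reflection in $H$ of a set already contained in $T \subset H^{\perp}$. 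Combining these,
$$(\di K)|T = \left((cK)|T\right)*\left((cK^{\dagger})|T\right) = \left(c(K|T)\right)*\left(c(K|T)^{\dagger}\right) = \di(K|T),$$
which is the required identity.

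The only genuine obstacle is the projection-covariance step for $*$; once that is in hand the remaining pieces fall out from formal manipulations with $GL(n)$-covariance, commutativity, and the involutive nature of $\dagger$.
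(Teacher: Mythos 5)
Your proposal is correct and follows essentially the same route as the paper: $H$-symmetry of $\di K$ from commutativity and $GL(n)$ covariance applied to the reflection in $H$, monotonicity immediately, projection covariance by citing that continuity plus $GL(n)$ covariance make $*$ projection covariant (\cite[Lemma~4.1]{GHW}) combined with $K^{\dagger}|T=(K|T)^{\dagger}$ for $T\subset H^{\perp}$, and invariance on $H$-symmetric sets directly from (\ref{dagsym}). Your added $\varepsilon$-approximation sketch of the projection-covariance lemma is a harmless extra, not a difference in approach.
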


\begin{proof}
Since reflection in $H$ is a transformation in $GL(n)$, the commutativity and $GL(n)$ covariance of $*$ yields
$$(\di K)^{\dagger}=((cK)*(cK^{\dagger}))^{\dagger}=(cK)^{\dagger}*\left((cK^{\dagger})
^{\dagger}
\right)=(cK)^{\dagger}*(cK)=(cK)*(cK^{\dagger})=\di K,$$
so $\di:{\mathcal{K}}^n\rightarrow {\mathcal{K}}^n_H$ is an $i$-symmetrization. It is clear that if $*$ is monotonic, then $\di$ is monotonic.

Suppose that $*$ is also continuous.  Then, by \cite[Lemma~4.1]{GHW}, $*$ is projection covariant.  If $T$ is a nontrivial subspace contained in $H^{\perp}$, this and the commutativity of $*$ imply that
$$(\di K)|T=((cK)*(cK^{\dagger}))|T=((cK)|T)*((cK^{\dagger})|T)=(c(K|T))*(c(K|T)
^{\dagger})
=\di(K|T),$$
so $\di$ is projection covariant.

The last assertion in the statement of the theorem follows immediately from (\ref{dagsym}).
\end{proof}

Let $c>0$, let $H\in {\mathcal{G}}(n,i)$, $i\in \{0,\dots,n-1\}$, let $M\subset \R^2$, and for $K\in {\mathcal{K}}^n$, define
\begin{equation}\label{Msymme}
\di_{M,c}K=(cK)\oplus_M (cK^{\dagger}),
\end{equation}
where $\oplus_M$ is $M$-addition, defined by (\ref{Mdef}).  The following lemma will be useful in establishing the properties of $\di_{M,c}$.

\begin{lem}\label{lem_lpluslequall}
Let $c>0$, let $L\in\mathcal{K}^n$, and let $M\in {\mathcal{K}}^2$ be contained in $[0,\infty)^2$ and symmetric with respect to $\{(x_1,x_2)\in \R^2:x_1=x_2\}$.  If $M\subset[(1/c,0), (0,1/c)]$, then
\begin{equation}\label{lpluslequall}
 (cL)\oplus_M (cL)=L,
\end{equation}
and the converse is true when $o\notin L$.

If $o\in L$ and $o\in M$, then \eqref{lpluslequall} holds if and only if  $M\subset \conv\{o, (1/c,0), (0,1/c)\}$ and  $M\cap[(1/c,0), (0,1/c)]\neq\emptyset$.
\end{lem}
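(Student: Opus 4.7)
The plan is to work directly from the definition, which unpacks as
\[
(cL)\oplus_M(cL)=\{c(au+bv): u,v\in L,\ (a,b)\in M\}.
\]
For the first assertion, if $M\subset[(1/c,0),(0,1/c)]$ then every $(a,b)\in M$ has $a,b\geq0$ and $a+b=1/c$, so $c(au+bv)=(ca)u+(cb)v$ is a convex combination of the points $u,v\in L$ and hence lies in $L$ by convexity; this gives $(cL)\oplus_M(cL)\subset L$. For the reverse inclusion, pick any fixed $(a,b)\in M$ (nonempty because $M\in\mathcal{K}^2$) and any $u\in L$, and observe $u=c(au+bu)\in(cL)\oplus_M(cL)$.

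For the converse direction of the first statement, assume $(cL)\oplus_M(cL)=L$ and $o\notin L$. Given $(a,b)\in M$, taking $u=v$ in the defining formula shows $c(a+b)L\subset L$. Write $\lambda=c(a+b)\geq0$. Compactness of $L$ rules out $\lambda>1$ (iterating would yield $\lambda^n u\in L$ for all $n$, forcing $L$ unbounded); the hypothesis $o\notin L$ together with closedness of $L$ rules out $\lambda<1$ (since $\lambda^n u\to o\in L$); and $\lambda=0$ would give $o\in L$ directly. Hence $\lambda=1$, i.e., $a+b=1/c$, and combined with $M\subset[0,\infty)^2$ this yields $M\subset[(1/c,0),(0,1/c)]$.

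For the second statement, the easier implication uses the same convexity idea with $o$ as an auxiliary vertex: when $M\subset\conv\{o,(1/c,0),(0,1/c)\}$, for $(a,b)\in M$ one has $ca,cb\geq0$ and $ca+cb\leq1$, so
\[
c(au+bv)=(ca)u+(cb)v+(1-ca-cb)o
\]
is a convex combination of $u,v,o\in L$, yielding $(cL)\oplus_M(cL)\subset L$; choosing $(a_0,b_0)\in M\cap[(1/c,0),(0,1/c)]$ then gives $u=c(a_0u+b_0u)\in(cL)\oplus_M(cL)$ for each $u\in L$. For the other implication, the argument from the second paragraph now only forces $\lambda=c(a+b)\leq 1$ (the case $\lambda<1$ is no longer excluded, as $o\in L$), which delivers $M\subset\conv\{o,(1/c,0),(0,1/c)\}$.

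The main obstacle, and the only genuinely new step, is showing $M\cap[(1/c,0),(0,1/c)]\neq\emptyset$ when $L\neq\{o\}$. The idea is to exploit $L\subset(cL)\oplus_M(cL)$ at an extremal point: pick $u\in L$ maximizing $|\,\cdot\,|$ over $L$, so $|u|>0$. By hypothesis there exist $(a,b)\in M$ and $x,y\in L$ with $u=c(ax+by)$, and the triangle inequality together with $|x|,|y|\leq|u|$ gives
\[
|u|\leq c(a+b)\,|u|,
\]
so $c(a+b)\geq1$. Combined with the bound $c(a+b)\leq1$ already obtained, equality holds, placing $(a,b)$ in $M\cap[(1/c,0),(0,1/c)]$. (The degenerate case $L=\{o\}$ renders \eqref{lpluslequall} automatic but makes the right-hand condition on $M$ nonvacuous, so it must be excluded from the biconditional.)
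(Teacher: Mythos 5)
Your proof is correct and follows essentially the same route as the paper's: both arguments reduce everything to the value of $c(a+b)$ for $(a,b)\in M$ (using convexity of $L$ to handle $au+bv$) and use extremal points of $L$ with respect to distance from the origin, your iteration/closedness and farthest-point triangle-inequality steps being minor variants of the paper's nearest/farthest-point argument. Your parenthetical about the degenerate case $L=\{o\}$ is a fair observation: the stated biconditional (and the paper's farthest-point argument) tacitly assumes $L\neq\{o\}$, since for $L=\{o\}$ and, say, $M=\{o\}$ equation \eqref{lpluslequall} holds while $M\cap[(1/c,0),(0,1/c)]=\emptyset$.
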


\begin{proof}
We have
\begin{equation}\label{explanation_lplusl}
(cL)\oplus_M (cL)=\{acx+bcy:x,y\in L, (a,b)\in M\}=\bigcup\{c(a+b)L: (a,b)\in M\}.
\end{equation}
Thus $M\subset[(1/c,0), (0,1/c)]$ implies $(cL)\oplus_M (cL)=L$. To prove the converse when $o\notin L$, we argue by contradiction. Assume that $(a,b)\in M$ is such that $c(a+b)< 1$. If $x\in L$ has minimum distance from $o$, then $c(a+b)x\in\left((cL)\oplus_M (cL)\right)\setminus L$, so \eqref{lpluslequall} fails. A similar argument applied to an $x\in L$ of maximal distance from $o$ shows that if $(a,b)\in M$ is such that $c(a+b)>1$, then \eqref{lpluslequall} fails.  Therefore $c(a+b)=1$ for each $(a,b)\in M$, yielding $M\subset[(1/c,0), (0,1/c)]$.

Now suppose that $o\in L$ and $o\in M$.  If $M\subset \conv\{o, (1/c,0), (0,1/c)\}$ and $M\cap[(1/c,0), (0,1/c)]\neq\emptyset$, then we see from
\eqref{explanation_lplusl} that \eqref{lpluslequall} holds. Conversely, assume that \eqref{lpluslequall} holds and let $x\in L$ be of maximal distance from $o$. If $(a,b)\in M$ is such that $c(a+b)>1$, then $c(a+b)x\in ((cL)\oplus_M (cL))\setminus L$, a contradiction showing that
$M\subset \conv\{o, (1/c,0), (0,1/c)\}$.  If $M\cap[(1/c,0), (0,1/c)]=\emptyset$ then $x\in L\setminus \left((cL)\oplus_M (cL)\right)$, again contradicting \eqref{lpluslequall}.
\end{proof}

\begin{thm}\label{corrM}
Let $c>0$, let $H\in {\mathcal{G}}(n,i)$, $i\in \{0,\dots,n-1\}$, and let $M\in {\mathcal{K}}^2$ be contained in $[0,\infty)^2$ and symmetric with respect to $\{(x_1,x_2)\in \R^2:x_1=x_2\}$.

{\rm{(i)}} Equation \eqref{Msymme} defines a monotonic and projection covariant $i$-symmetrization $\di_{M,c}:{\mathcal{K}}^n\rightarrow {\mathcal{K}}^n_{H}$.

{\rm{(ii)}} If $M\subset(0,\infty)^2$, then $\di_{M,c}$  is strictly monotonic.

{\rm{(iii)}} The symmetrization $\di_{M,c}$  is idempotent if and only if either $M\subset[(1/c,0), (0,1/c)]$ or $o\in M$, $M\subset \conv\{o, (1/c,0), (0,1/c)\}$, and $M\cap[(1/c,0), (0,1/c)]\neq\emptyset$.

{\rm{(iv)}} The inclusion $M\subset[(1/c,0), (0,1/c)]$ is equivalent to any of the following conditions: $\di_{M,c}$ is invariant on $H$-symmetric sets, or $\di_{M,c}$ is invariant on $H$-symmetric spherical cylinders, or $\di_{M,c}$ is projection invariant.

{\rm{(v)}} The symmetrization $\di_{M,c}$ is invariant under translations orthogonal to $H$ of $H$-symmetric sets if and only if $M\subset\{(x_1,x_2)\in \R^2 : x_1=x_2\}$.
\end{thm}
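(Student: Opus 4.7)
I propose to prove the five parts in turn, using throughout the explicit description
$$\di_{M,c}K=\bigcup_{(a,b)\in M}\bigl(ac K+bc K^{\dagger}\bigr)$$
together with the fact that $M$-addition commutes with linear projection. For (i), the symmetry of $M$ across the diagonal $\{x_1=x_2\}$ makes $\oplus_M$ commutative, so reflecting $\di_{M,c}K$ in $H$ merely swaps its two summands; hence $\di_{M,c}K$ is $H$-symmetric, and monotonicity is immediate from the definition. Projection covariance for $T\subset H^{\perp}$ follows because reflection in $H$ restricted to such a $T$ acts as $-\mathrm{Id}$, so $(cK^{\dagger})|T=(cK|T)^{\dagger}$, and $\oplus_M$ commutes with the projection onto $T$. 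For (ii), I would use the support function identity $h_{K^{\dagger}}(u)=h_K(u^{\dagger})$ to obtain
$$h_{\di_{M,c}K}(u)=\sup_{(a,b)\in M}\bigl(ach_K(u)+bch_K(u^{\dagger})\bigr);$$
if $K\subsetneq L$ then some $u\in S^{n-1}$ has $h_K(u)<h_L(u)$, and compactness of $M$ provides an extremizer $(a^{*},b^{*})$ for $\di_{M,c}K$ at $u$. The positivity of $a^{*}$, which is guaranteed by $M\subset(0,\infty)^2$, promotes the pointwise inequality $h_K\le h_L$ into the strict inequality $h_{\di_{M,c}K}(u)<h_{\di_{M,c}L}(u)$.

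For (iii), set $L=\di_{M,c}K$; by (i), $L^{\dagger}=L$, so idempotence is equivalent to $(cL)\oplus_M(cL)=L$ for every $L$ in the image of $\di_{M,c}$. Sufficiency of either alternative follows from Lemma~\ref{lem_lpluslequall}: if $M\subset[(1/c,0),(0,1/c)]$ the identity holds for arbitrary $L\in\mathcal{K}^n$, while if $o\in M$ and $M$ satisfies the other listed conditions, then the choice $(0,0)\in M$ forces $o\in L$ for every $K$ and the second case of the Lemma applies. The main obstacle is the converse, where one must produce enough sets $L$ in the image of $\di_{M,c}$ to activate the contrapositive parts of the Lemma. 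Testing with a singleton $K=\{x\}$ and decomposing $x=x_H+x_{H^{\perp}}$ gives
$$\di_{M,c}\{x\}=\{c(a+b)x_H+c(a-b)x_{H^{\perp}}:(a,b)\in M\},$$
so choosing $x\notin H\cup H^{\perp}$ yields $o\in\di_{M,c}\{x\}$ if and only if $(0,0)\in M$. When $o\notin M$ this produces an $L$ with $o\notin L$, and the first converse in the Lemma forces $M\subset[(1/c,0),(0,1/c)]$; when $o\in M$, $L$ always contains $o$ and the second converse yields the remaining alternative.

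For (iv), I would show that each of (b), (c), (d) is equivalent to $M\subset[(1/c,0),(0,1/c)]$. Under that inclusion, if $K$ is $H$-symmetric then $K^{\dagger}=K$ and $\di_{M,c}K=(cK)\oplus_M(cK)=K$ by Lemma~\ref{lem_lpluslequall}, proving (b) and hence also (c); the analogous identity applied on the projection, combined with $K^{\dagger}|H=K|H$ and the commutation of $\oplus_M$ with projections, yields (d). For the converses I would test on an $H$-symmetric spherical cylinder $K=D_r(x)+s(B^n\cap H^{\perp})$ with $|x|>r$. Since $K^{\dagger}=K$, $\di_{M,c}K=\bigcup_{(a,b)\in M}c(a+b)K$, and each of (b), (c), or (after projecting onto $H$) (d) then forces every translated ball $c(a+b)D_r(x)$ to lie inside $D_r(x)$. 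The choice $|x|>r$ excludes every genuine dilation, so $c(a+b)=1$ must hold for every $(a,b)\in M$, that is, $M\subset[(1/c,0),(0,1/c)]$.

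For (v), if $K$ is $H$-symmetric and $y\in H^{\perp}$, then $K^{\dagger}=K$ and $y^{\dagger}=-y$, so a direct computation gives
$$\di_{M,c}(K+y)=\bigcup_{(a,b)\in M}\bigl(c(a+b)K+c(a-b)y\bigr),\qquad\di_{M,c}K=\bigcup_{(a,b)\in M}c(a+b)K.$$
If $M\subset\{x_1=x_2\}$ then $a-b=0$ throughout and the two unions coincide. Conversely, taking $K=B^n$ and letting $|y|\to\infty$, any $(a,b)\in M$ with $a\ne b$ contributes a ball in $\di_{M,c}(K+y)$ displaced by $c(a-b)y$, eventually lying outside the bounded set $\di_{M,c}K$.
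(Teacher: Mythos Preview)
Your proposal is correct and follows essentially the same route as the paper's proof: both reduce (iii) and (iv) to Lemma~\ref{lem_lpluslequall}, handle (ii) via the support-function formula for $\oplus_M$, and treat (i) and (v) by direct manipulation of the defining expression. The only differences are cosmetic choices of test objects in the converse directions---you use singletons $\{x\}$ with $x\notin H\cup H^{\perp}$ in (iii) where the paper uses sets lying in a half-space $\{x\cdot e_1\ge 1\}$ with $e_1\in H$, and you argue (v) by letting $|y|\to\infty$ with $K=B^n$ where the paper evaluates $h_M(\pm 1,\mp 1)$ on a tangent ball---but these are interchangeable variations within the same strategy (and both tacitly require $i\ge 1$ for the test in (iii) to exist).
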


\begin{proof}
(i) By \cite[Theorem~6.1(i)]{GHW}, $\oplus_M:\left({\mathcal{K}}^n\right)^2\rightarrow {\mathcal{K}}^n$ if and only if $M\in {\mathcal{K}}^2$ and $M$ is contained in one of the four quadrants of $\R^2$.  Also, as noted in \cite[Section~6]{GHW}, $\oplus_M$ is continuous and $GL(n)$-covariant and hence projection covariant. Moreover, it follows from the definition of $\oplus_M$ that it is monotonic, and $\oplus_M$ is commutative if and only if $M$ is symmetric with respect to $\{(x_1,x_2)\in \R^2:x_1=x_2\}$.  Therefore (i) follows directly from Theorem~\ref{projM}.

(ii) Let $K,L\in\mathcal{K}^n$ and let $x\in \R^n$. By \cite[Theorem 6.5]{GHW},
\begin{equation}\label{GHWform}
h_{K\oplus_M L}(x)=h_M(h_K(x), h_L(x)),
\end{equation}
implying that
\begin{equation}\label{support_msymm}
h_{\di_{M,c}K}(x)=h_M(ch_K(x),ch_{K^\dag}(x)).
\end{equation}
Suppose that $K\subset L$ and $K\neq L$. There exists $z\in\R^n$ such that
\begin{equation}\label{ineq_strict_inclusion}
 h_K(z)<h_L(z)\quad\text{and}\quad h_{K^\dag}(z)\leq h_{L^\dag}(z).
\end{equation}
Let $(a,b)\in M$ be such that
$$
h_M(ch_K(z),ch_{K^\dag}(z))=(a,b)\cdot(ch_K(z),ch_{K^\dag}(z)).
$$
Since $a,b>0$, \eqref{support_msymm} and \eqref{ineq_strict_inclusion} yield
$$
h_{\di_{M,c}K}(x)=(a,b)\cdot(ch_K(x),ch_{K^\dag}(x))<(a,b)\cdot(ch_L(x),c h_{L^\dag}(x))\leq h_{\di_{M,c}L}(x)
$$
and hence $\di_{M,c}K\neq \di_{M,c}L$.

(iii) Let $K\in\mathcal{K}^n$. Since $\di_{M,c} K$ is $H$-symmetric, we have
\begin{equation}\label{explanation_idempotence}
 \di_{M,c}^2 K=(c\di_{M,c} K)\oplus_M \left(c(\di_{M,c} K)^\dag\right)=(c\di_{M,c} K)\oplus_M (c\di_{M,c} K).
\end{equation}
If $M\subset[(1/c,0),(0,1/c)]$, the idempotence of $\di_{M,c}$ follows from this and Lemma~\ref{lem_lpluslequall} with $L=\di_{M,c}K$. If $o\in M$, $M\subset \conv\{o, (1/c,0), (0,1/c)\}$, and $M\cap[(1/c,0), (0,1/c)]\neq\emptyset$, then $o\in\di_{M,c} K$ and again the  idempotence of $\di_{M,c}$ follows from \eqref{explanation_idempotence} and Lemma~\ref{lem_lpluslequall} with $L=\di_{M,c}K$.
To prove the converse, suppose that $\di_{M,c}$ is idempotent. If  $o\notin\di_{M,c} K$ for some $K\in \mathcal{K}^n$, then $M\subset[(1/c,0), (0,1/c)]$, by Lemma~\ref{lem_lpluslequall} with $L=\di_{M,c}K$. Otherwise, we have $o\in\di_{M,c}K$ for all $K\in \mathcal{K}^n$.  We claim that $o\in M$.  Indeed, suppose on the contrary that $d=\min\{a+b : (a,b)\in M\}>0$. If $H=\lin\{e_1,\dots,e_i\}$ and $K\subset\{x\in\R^n : x\cdot e_1\geq 1\}$, then $K^\dag\subset\{x\in\R^n : x\cdot e_1\geq 1\}$. Then for each $x\in K$, $y\in K^\dag$, and $(a,b)\in M$, we have
$$
 c(ax+by)\cdot e_1\geq c(a+b)\geq cd.
$$
Therefore $\di_{M,c}K\subset \{x\in\R^n : x\cdot e_1\geq cd\}$, which since $cd>0$ contradicts $o\in\di_{M,c}K$ and proves the claim. Now since $o\in M$, Lemma~\ref{lem_lpluslequall} with $L=\di_{M,c}K$ implies that $M\subset \conv\{o, (1/c,0), (0,1/c)\}$ and $M\cap[(1/c,0), (0,1/c)]\neq\emptyset$.

(iv) Suppose that $M\subset[(1/c,0), (0,1/c)]$. If $K$ is an $H$-symmetric set, then
$$
 \di_{M,c}K=(cK)\oplus_M (cK^{\dagger})=(cK)\oplus_M (cK).
$$
Then Lemma~\ref{lem_lpluslequall} with $L=K$ yields $\di_{M,c}K=K$.  Therefore $\di_{M,c}$ is invariant on $H$-symmetric sets and hence also on $H$-symmetric spherical cylinders.  Now suppose that $\di_{M,c}$ is invariant on $H$-symmetric spherical cylinders. Let $K$ be an $H$-symmetric spherical cylinder such that $o\not\in K$.  Then (\ref{lpluslequall}) holds with $L=K$ and  Lemma~\ref{lem_lpluslequall} implies that $M\subset[(1/c,0), (0,1/c)]$.  It remains to prove (iv) for projection invariance, which follows easily from similar arguments and the formulas
$$
\left((cK)\oplus_M (cK^{\dagger})\right)|H=\left(cK|H\right)\oplus_M \left(cK^{\dagger}|H\right)=\left(cK|H\right)\oplus_M \left( cK|H\right).
$$

(v) If $M\subset\{(x_1,x_2)\in\R^2 : x_1=x_2\}$, then $h_M$ is constant on each line orthogonal to $\{(x_1,x_2)\in\R^2 : x_1=x_2\}$. Therefore, if $K\in\mathcal{K}^n$ is an $H$-symmetric set and $y\in H^\perp$, by (\ref{GHWform}) we have
\begin{equation}\label{support_constant_on_lines}
\begin{aligned}
h_{\di_{M,c}(K+y)}(x)&=h_M(ch_{K+y}(x),ch_{(K+y)^\dag}(x))\\
&=h_M(ch_K(x)+cy\cdot x,ch_K(x)-cy\cdot x)\\
&=h_M(ch_K(x),ch_K(x))=h_{\di_{M,c}K}(x).
\end{aligned}
\end{equation}
Thus $\di_{M,c}$ is invariant under translations orthogonal to $H$ of $H$-symmetric sets. Conversely, suppose that $\di_{M,c}$ is invariant under translations orthogonal to $H$ of $H$-symmetric sets.  Let $x\in\R^n\setminus \left(H\cup H^\perp\right)$ and let $K$ be a ball with center in $H$, supported by $x^{\perp}$ and contained in $\{y\in\R^n : y\cdot x\leq 0\}$.  Then $h_K(x)=0$. Choosing $y\in H^\perp$ such that $x\cdot y=\pm 1/c$ and substituting into \eqref{support_constant_on_lines}, we conclude that $h_M(1,-1)=h_M(-1,1)=0$ and hence that
$M\subset\{(x_1,x_2)\in\R^2 : x_1=x_2\}$.
\end{proof}

We call an $i$-symmetrization $\di_{M,c}$, defined by (\ref{Msymme}) with $c$ and $M$ satisfying the hypotheses of Theorem~\ref{corrM}, an {\em $M$-symmetrization}.  Examples include Minkowski symmetrization (when $\oplus_M=+$ and $c=1/2$) and $p$th central symmetrization (when $i=0$, $\oplus_M=+_p$, and $c=2^{-1/p}$).  Before discussing others, we present the following result concerning projection covariant $i$-symmetrizations, which shows that they are somewhat special and explains their limited role in this paper.  The proof is essentially the same as that of \cite[Theorem~8.2]{GHW}, which establishes the case $i=0$.

\begin{prop}\label{pcs}
Let $i\in \{0,\dots,n-2\}$, let $H\in {\mathcal{G}}(n,i)$, and let $\di:{\mathcal{K}}^n\to {\mathcal{K}}^n_H$ be an $i$-symmetrization.  If $\di$ is projection covariant, then there is a compact convex set $M$ in $\R^2$, symmetric with respect to $\{(x_1,x_2)\in \R^2:x_1=x_2\}$, such that
\begin{equation}\label{11}
h_{\di K}(x)=h_M\left(h_K(x),h_K(-x)\right)
\end{equation}
for all $K\in {\mathcal{K}}^n$ and $x\in H^{\perp}$.
\end{prop}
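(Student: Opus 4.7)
The plan is to exploit projection covariance to reduce the problem to understanding how $\di$ acts on segments lying in lines contained in $H^\perp$. Since $i\le n-2$, we have $\dim H^\perp = n-i\ge 2$, a fact that will enter later. For each unit $x \in H^\perp$, set $T_x = \myspan\{x\}$ and $I^x_{p,q} = [-qx, px]$ for $p+q\ge 0$. Since $T_x \subset H^\perp$, projection covariance yields $(\di K)|T_x = \di(K|T_x) = \di I^x_{h_K(x), h_K(-x)}$. Taking $K = I^x_{p,q}$ itself shows $\di I^x_{p,q} \subset T_x$, and combining the $H$-symmetry of $\di I^x_{p,q}$ with $T_x \subset H^\perp$ forces it to be an $o$-symmetric segment $[-\alpha_x(p,q)x,\alpha_x(p,q)x]$ for some $\alpha_x(p,q)\ge 0$. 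Consequently,
\[
h_{\di K}(x) = \alpha_x(h_K(x), h_K(-x)) \quad \text{for every unit } x \in H^\perp.
\]

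The technical heart of the argument is to show that $\alpha_x$ is positively homogeneous of degree one, independent of $x$, and symmetric in its arguments. Fix unit vectors $y, x_0 \in H^\perp$ with $s := y\cdot x_0 > 0$, and apply the displayed formula to $K = I^{x_0}_{p,q}$ at the vector $y$: since $h_K(y) = ps$ and $h_K(-y) = qs$, one evaluation gives $h_{\di K}(y) = \alpha_y(ps, qs)$, while direct computation from $\di K = \alpha_{x_0}(p,q)[-x_0, x_0]$ gives $h_{\di K}(y) = s\,\alpha_{x_0}(p,q)$. Thus $\alpha_y(ps, qs) = s\,\alpha_{x_0}(p,q)$; interchanging $y$ and $x_0$ in this identity and composing yields $\alpha_{x_0}(s^2 p, s^2 q) = s^2 \alpha_{x_0}(p,q)$, from which positive $1$-homogeneity of $\alpha_{x_0}$ follows (the case $\lambda > 1$ coming by reciprocation). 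Feeding $1$-homogeneity back into the first identity gives $\alpha_y = \alpha_{x_0}$ whenever $y\cdot x_0 > 0$; because $\dim H^\perp \ge 2$, any two unit vectors of $H^\perp$ are linked by a finite chain of unit vectors with strictly positive consecutive inner products, so a single function $\alpha$ agrees with every $\alpha_y$. A direct computation using $I^{-x}_{p,q} = I^x_{q,p}$ shows $\alpha_{-x}(p,q) = \alpha_x(q,p)$, and chaining $x_0$ to $-x_0$ then forces $\alpha(p,q) = \alpha(q,p)$.

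Positive homogeneity extends the formula $h_{\di K}(y) = \alpha(h_K(y), h_K(-y))$ from unit $y$ to all $y \in H^\perp$. For subadditivity of $\alpha$ on the domain $\{p+q\ge 0\}$, I would apply this formula to $K = I^{x_0}_{p_1, q_1} + I^{e}_{p_2, q_2}$, where $e \in H^\perp$ is a unit vector orthogonal to $x_0$. The additivity of support functions under Minkowski sums gives $h_K(x_0) = p_1$, $h_K(-x_0) = q_1$, $h_K(e) = p_2$, $h_K(-e) = q_2$, and $h_K(\pm(x_0+e)) = p_1+p_2, q_1+q_2$, so the sublinearity of $h_{\di K}$ in the direction yields
\[
\alpha(p_1+p_2, q_1+q_2) = h_{\di K}(x_0+e) \le h_{\di K}(x_0) + h_{\di K}(e) = \alpha(p_1, q_1) + \alpha(p_2, q_2).
\]
Combined with $1$-homogeneity, this shows $\alpha$ is sublinear on $\{p+q\ge 0\}$; a standard sublinear extension to $\R^2$ produces the support function $h_M$ of a compact convex $M \subset \R^2$, which can be chosen symmetric about $\{x_1 = x_2\}$ thanks to the symmetry of $\alpha$, and the identity \eqref{11} follows because $(h_K(x), h_K(-x))$ always lies in $\{p+q\ge 0\}$. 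The main obstacle is the comparison argument of the second paragraph, where projection covariance onto two distinct lines in $H^\perp$ must be combined with $H$-symmetry and the hypothesis $\dim H^\perp \ge 2$ to extract $1$-homogeneity and direction-independence of $\alpha_x$ from the relatively meager data at hand.
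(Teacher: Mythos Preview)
Your argument is correct and follows essentially the same route as the paper, which simply cites \cite[Theorem~8.2]{GHW} and observes that its proof carries over verbatim to $H^\perp\cong\R^{n-i}$ once $n-i\ge 2$. What you have written is, in effect, a self-contained reconstruction of that argument: reduce via projection covariance to one-dimensional projections, extract a function $\alpha_x$ on segments, use two non-parallel lines in $H^\perp$ (available precisely because $i\le n-2$) to force $1$-homogeneity and direction-independence, and then build $M$ from the resulting sublinear function. The only point worth tightening is the final ``standard sublinear extension'' step: the naive $M=\{(a,b):ap+bq\le\alpha(p,q)\ \forall (p,q)\in C\}$ can be unbounded along $(-1,-1)$, so one should either verify that truncation does not alter $h_M$ on $C$ or argue compactness directly; this is routine but not entirely automatic.
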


\begin{proof}
Suppose that $\di$ is projection covariant. Without loss of generality, we may assume that $H^{\perp}=\lin\{e_1,\dots,e_{n-i}\}$ and $H=\lin\{e_{n-i+1},\dots,e_n\}$. Since $n-i\ge 2$, the proof of \cite[Theorem~8.2]{GHW} can be followed with $n$ there replaced by $n-i$, identifying $H^{\perp}$ by $\R^{n-i}$ in that proof in the natural way, leading to (\ref{11}).
\end{proof}

When $M=\{(1/2,1/2)\}$, for example, (\ref{11}) is equivalent to
$$h_{\di K}(x)=\frac12 h_K(x)+\frac12 h_{K}(-x)$$
for all $K\in {\mathcal{K}}^n$ and $x\in H^{\perp}$.  Then $\di K$ is just the central symmetral $\Delta K$ when $i=0$.

The converse of Proposition~\ref{pcs} holds when $i=0$ or if it is assumed in addition to (\ref{11}) that $K\subset H^{\perp}$ implies that $\di K\subset H^{\perp}$, but in general it is false.  For example, let $i\in \{1,\dots,n-2\}$ and let $\di K=M_HK+(B^n\cap H)$ for all $K\in {\mathcal{K}}^n$, as in Example~\ref{ex2} below.  Then (\ref{11}) holds with $M=\{(1/2,1/2)\}$, since if $x\in H^{\perp}$, then $h_{K^{\dagger}}(x)=h_K(-x)$ and $h_{B^n\cap H}(x)=0$.  However, $\di$ is not projection covariant, since if $T$ is a nontrivial subspace contained in $H^{\perp}$, then $(\di B^n)|T=B^n|T$, but
$$\di(B^n|T)=(B^n|T)+(B^n\cap H).$$

We warn the reader that the right-hand side of (\ref{11}) does not necessarily define a support function, even when $i=0$ and $M$ is the unit ball in $l^2_p$ with $p>1$; see \cite[p.~2334]{GHW}.  Despite this, if in (\ref{11}) we take $i=0$ and $M$ to be the part of the unit ball in $l^2_{p'}$ in $[0,\infty)^2$, where $1/p+1/p'=1$ and $p>1$, then $\di K=\Delta_pK$; see \cite[Example~6.7]{GHW}.

We now consider other possibilities for $M$-symmetrizations.  If $M\subset [(1/c,0),(0,1/c)]$, then by Theorem~\ref{corrM}(iv), $\di_{M,c}$ is invariant on $H$-symmetric sets and projection invariant. When $c=1$ and $M=\{(1/2,1/2)\}$, we retrieve Minkowski symmetrization as in (\ref{Minks}).  On the other hand, if we take $c=1$ and $M=[(1,0),(0,1)]$, then
$$\di_{M,1}K=\bigcup\{(1-t)x+ty: x\in K, y\in K^{\dagger}, 0\le t\le 1\}=\conv(K\cup K^{\dagger}),$$
as in Example~\ref{ex2nww}(ii) below.  A further choice for $M$ provides Example~\ref{ex1Gab} below.  These examples will find use in Section~\ref{idemp}.

\section{Further examples of symmetrizations}\label{further}

In this section we collect some further examples of symmetrizations that will be needed later.

Firstly, we introduce two new generalizations of Steiner and Minkowski symmetrization.  Let $H\in {\mathcal{G}}(n,i)$, $i\in\{1,\dots,n-1\}$. If $K\in {{\mathcal{K}}^n}$, we define the {\em inner rotational symmetral} $I_HK$ to be the set such that for each $(n-i)$-dimensional plane $G$ orthogonal to $H$ and meeting $K$, the set $G\cap I_HK$ is a (possibly degenerate) $(n-i)$-dimensional ball with center in $H$ and radius equal to that of the (possibly degenerate) largest $(n-i)$-dimensional ball contained in $G\cap K$.  It is easy to check that $I_HK$ is a compact convex set such that $I_HK\subset S_HK$, the Schwarz symmetral of $K$.  If $i=n-1$, then of course $I_HK=S_HK$, the Steiner symmetral. The symmetrization $I_H$ is monotonic, idempotent, invariant on $H$-symmetric spherical cylinders, and projection invariant, but not strictly monotonic or invariant on $H$-symmetric sets.  The inclusion $I_HK\subset S_HK$, which in general is strict unless $i=n-1$, and the fact that $S_H$ is $V_n$-preserving and generally reduces $V_j$ for $j\in \{1,\dots,n-1\}$, imply that $I_H$ generally reduces $V_j$ for $j\in \{1,\dots,n-1\}$ and also for $j=n$ when $i\in \{1,\dots,n-2\}$.

When $K$ is a convex body, $I_HK$ can also be viewed as the closure of the union of all $H$-symmetric spherical cylinders that have a translate orthogonal to $H$ contained in $K$.  It is rotationally symmetric with respect to $H$.

For $K\in {{\mathcal{K}}^n}$, we define the {\em outer rotational symmetral} $O_HK$ to be the intersection of all rotationally symmetric convex bodies for which some translate orthogonal to $H$ contains $K$.  The inclusion $M_HK\subset O_HK$ holds, as can be seen by taking $\di=M_H$ in Theorem~\ref{IdemGab2} below.  If $i=n-1$, then $O_HK=M_HK$, the Minkowski symmetral.  This can be deduced by taking $\di=O_H$ and $i=n-1$ in Corollary~\ref{IdemGab}.  The symmetrization $O_H$ is strictly monotonic, idempotent, and invariant on $H$-symmetric spherical cylinders, but not invariant on $H$-symmetric sets unless $i=n-1$.  That it is also projection invariant can be seen by noting that for each $K\in {{\mathcal{K}}^n}$ and suitably large $r>0$, the set $L=(K|H)\times r(B^n\cap H^{\perp})$ is a rotationally invariant compact convex set with $K\subset L$ and $L|H=K|H$.  The inclusion $M_HK\subset O_HK$, which in general is strict unless $i=n-1$, and the fact that $M_H$ is $V_1$-preserving and generally increases $V_j$ for $j\in \{2,\dots,n\}$, imply that $O_H$ generally increases $V_j$ for $j\in \{2,\dots,n\}$ and also for $j=1$ when $i\in \{1,\dots,n-2\}$.

We present some examples that will be useful in showing that the various assumptions in our results cannot be omitted.  (See Section~\ref{SS} for a few further examples needed only there.) For Example~\ref{ex3nww}, we shall need the following lemma.

\begin{lem}\label{VJ}
Let $H\in {\mathcal{G}}(n,i)$, $i\in\{1,\dots,n-1\}$, and let $K\in{\mathcal{K}}^n$.   Define $L_K = \conv(\overline{K}\cup (K|H))$, where $\overline{K}$ is the (possibly empty) closure of the union of all $H$-symmetric spherical cylinders contained in $K$.  Then for $j\in\{1,\dots,n\}$,
\begin{equation}\label{vjineq}
V_j(K|H)\le V_j(L_K)\le V_j(K).
\end{equation}
\end{lem}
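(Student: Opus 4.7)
The plan is to sandwich $L_K$ between $K|H$ and the Schwarz symmetral $S_HK$ (or the Steiner symmetral in the case $i=n-1$, which is covered by the same notation). The left inequality $V_j(K|H)\le V_j(L_K)$ is immediate from the inclusion $K|H\subset L_K$, which is built into the definition of $L_K$ as a convex hull containing $K|H$, together with the monotonicity of intrinsic volumes under set inclusion.

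For the right inequality, I would establish $L_K\subset S_HK$. The key step is to show that every $H$-symmetric spherical cylinder contained in $K$ lies in $S_HK$. Given $C=D_r(x)+s(B^n\cap H^{\perp})\subset K$, for each $y\in D_r(x)$ the fiber $C\cap(H^{\perp}+y)=s(B^n\cap H^{\perp})+y$ is an $(n-i)$-dimensional ball of radius $s$ sitting inside $K\cap(H^{\perp}+y)$, so $V_{n-i}(K\cap(H^{\perp}+y))\ge\kappa_{n-i}s^{n-i}$. By the definition of the Schwarz symmetral, $S_HK\cap(H^{\perp}+y)$ is the $(n-i)$-dimensional ball centered at $y$ with the same $(n-i)$-volume as $K\cap(H^{\perp}+y)$, so it has radius at least $s$ and therefore contains $C\cap(H^{\perp}+y)$. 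Letting $y$ vary yields $C\subset S_HK$, and taking closures of unions, $\overline{K}\subset S_HK$. The projection invariance property $(S_HK)|H=K|H$ of Schwarz/Steiner symmetrization, recalled in Section~\ref{symm}, together with the fact that each slice of $S_HK$ in $H^{\perp}+x$ with $x\in K|H$ is a ball centered at $x$, gives the inclusion $K|H\subset S_HK$. Since $S_HK$ is convex, combining the two inclusions yields $L_K=\conv(\overline{K}\cup(K|H))\subset S_HK$.

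The right inequality $V_j(L_K)\le V_j(K)$ then follows from two ingredients also recorded in Section~\ref{symm}: the monotonicity of $V_j$ under inclusion, giving $V_j(L_K)\le V_j(S_HK)$, and the fact that $V_n(S_HK)=V_n(K)$ while $V_j(S_HK)\le V_j(K)$ for $j\in\{1,\dots,n-1\}$. The only step that requires genuine work is the slice-by-slice comparison establishing $\overline{K}\subset S_HK$; everything else is a direct application of standard properties of Schwarz/Steiner symmetrization and of intrinsic volumes.
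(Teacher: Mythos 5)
Your proof is correct and takes essentially the paper's route: the paper's one-line argument sandwiches $K|H\subset L_K\subset I_HK$ and invokes the fact that the inner rotational symmetral does not increase $V_j$ (a fact itself derived from $I_HK\subset S_HK$ and the properties of Schwarz/Steiner symmetrization), whereas you verify $L_K\subset S_HK$ directly by the slice-by-slice comparison. Both arguments then conclude with the monotonicity of intrinsic volumes and the facts that $S_H$ preserves $V_n$ and does not increase $V_j$ for $j\in\{1,\dots,n-1\}$.
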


\begin{proof}
From the definitions of $L_K$ and $I_HK$, we have $K|H\subset L_K\subset I_HK$.  Then (\ref{vjineq}) follows, since $V_j$ is an increasing set function and $I_H$ does not increase $V_j$.
\end{proof}

In Examples~\ref{ex1nwwmar}--\ref{ex1Gab} below, we assume for convenience that $H\in {\mathcal{G}}(n,i)$, $i\in \{1,\dots,n-1\}$ (even though $i=0$ is sometimes possible), and ${\mathcal{B}}={\mathcal{K}}^n$ or ${\mathcal{B}}={\mathcal{K}}^n_n$, and discuss the properties of the symmetrizations defined.  All symmetrizations are invariant under translations orthogonal to $H$ of $H$-symmetric sets unless it is stated otherwise. We omit mention of projection covariance since it will not be needed for the rest of the paper.

\begin{ex}\label{ex1nwwmar}
{\em For all $K\in {\mathcal{B}}$, let $\di K$ be the smallest $H$-symmetric spherical cylinder such that some translate orthogonal to $H$ contains $K$, where we temporarily allow the cylinder to be degenerate.  Then ${\di}:{\mathcal{B}}\rightarrow{\mathcal{B}}_H$ is monotonic, idempotent, and invariant on $H$-symmetric spherical cylinders, but not strictly monotonic or invariant on $H$-symmetric sets. Also, $\di$ is projection invariant if and only if $i=1$.
\qed}
\end{ex}

\begin{ex}\label{exSHcont}
{\em For all $K\in {\mathcal{B}}$, let $x_K$ and $y_K$ be the centroids of $K|H$ and $K|H^{\perp}$, respectively.  Let $r_K,s_K\ge 0$ be the largest numbers such that $D_K=r_K(B^n\cap H)+x_K\subset K|H$ and $E_K=s_K(B^n\cap H^{\perp})+y_K\subset K|H^{\perp}$. Define $\di K=D_K+E_K$. Then ${\di}:{\mathcal{B}}\rightarrow{\mathcal{B}}_H$ is idempotent and invariant on $H$-symmetric spherical cylinders, but not monotonic or invariant on $H$-symmetric sets.  Also, $\di$ is projection invariant if and only if $i=1$.\qed}
\end{ex}

\begin{ex}\label{ex1nww}
{\em For all $K\in {\mathcal{B}}$, let $\di K=M_HK+V_n(K\triangle K^{\ddag})B^n$, where $\triangle$ denotes the symmetric difference and $K^{\ddag}$ is the reflection of $K$ in the translate of $H$ containing the centroid of $K$. Then ${\di}:{\mathcal{B}}\rightarrow{\mathcal{B}}_H$ is invariant on $H$-symmetric sets but not projection invariant and hence, by Theorem~\ref{Invariance} below, not monotonic either (as is also not hard to see directly).
\qed}
\end{ex}

\begin{ex}\label{ex2}
{\em For all $K\in {\mathcal{B}}$, let $\di K=M_HK+(B^n\cap H^{\perp})$. Then ${\di}:{\mathcal{B}}\rightarrow{\mathcal{B}}_H$ is strictly monotonic and projection invariant, but not idempotent or invariant on $H$-symmetric spherical cylinders.\qed}
\end{ex}

\begin{ex}\label{lastex}
{\em For all $K\in {\mathcal{B}}$, let $C_K$ be the smallest $H$-symmetric spherical cylinder such that some translate orthogonal to $H$ contains $K$ (i.e., $C_K$ is the symmetral from Example~\ref{ex1nwwmar}) and define $\di K=(1/2)O_HK + (1/2)C_K$, where $O_HK$ is the outer rotational symmetral of $K$. Then ${\di}:{\mathcal{B}}\rightarrow{\mathcal{B}}_H$ is strictly monotonic and invariant on $H$-symmetric spherical cylinders, but not idempotent or invariant on $H$-symmetric sets.  Also, $\di$ is projection invariant if and only if $i=1$.
\qed}
\end{ex}

\begin{ex}\label{ex2vi}
{\em For all $K\in {\mathcal{B}}$, let $\di K=(K|H)+(B^n\cap H^{\perp})$. Then ${\di}:{\mathcal{B}}\rightarrow{\mathcal{B}}_H$ is monotonic, idempotent, and projection invariant, but not strictly monotonic or invariant on $H$-symmetric spherical cylinders.\qed}
\end{ex}

\begin{ex}\label{Vexlast}
{\em For all $K\in {\mathcal{B}}$, let $d_K$ be the distance between $K$ and $H$.  For $a\ge 0$, let $\phi_a:\R^n\to\R^n$ be defined by $\phi_a(x+y)=x+ay$, where $x\in H$ and $y\in H^{\perp}$.  Define ${\di}:{\mathcal{B}}\rightarrow{\mathcal{B}}_H$ by $\di K=\phi_{e^{-d_K}}F_HK$.  It is easy to check that $\di$ is strictly monotonic, invariant on $H$-symmetric sets, and projection invariant, but not invariant under translations orthogonal to $H$ of $H$-symmetric sets.}
\end{ex}

\begin{ex}\label{Vexlas2}
{\em For all $K\in {\mathcal{B}}$, define ${\di}:{\mathcal{B}}\rightarrow{\mathcal{B}}_H$ by $\di K=\phi_{e^{-V_n(K\triangle K^{\ddag})}}F_HK$, where $K\triangle K^{\ddag}$ and $\phi_a$, $a\ge 0$, are as in Examples~\ref{ex1nww} and~\ref{Vexlast}, respectively.  Then $\di$ is invariant on $H$-symmetric sets and projection invariant, but not monotonic.}
\end{ex}

\begin{ex}\label{Vex2}
{\em Let $j\in \{1,\dots,n\}$.  For all $K\in {\mathcal{B}}$, let $\di K=t_KB^n$, where $t_K\ge 0$ is chosen so that $V_j(\di K)=V_j(K)$. Then ${\di}:{\mathcal{B}}\rightarrow{\mathcal{B}}_H$ is monotonic (strictly monotonic if and only if ${{\mathcal{B}}}={\mathcal{K}}^n_n$ or $j=1$), $V_j$-preserving, and idempotent, but not invariant on $H$-symmetric spherical cylinders or projection invariant.\qed}
\end{ex}

\begin{ex}\label{ex2nww}
{\em Either define

\rm{(i)}  $\di K=\conv((K\cap K^{\dagger})\cup (K|H))$ for all $K\in {\mathcal{B}}$, or

\rm{(ii)} $\di K=\conv(K\cup K^{\dagger})$ for all $K\in {\mathcal{B}}$.

Then ${\di}:{\mathcal{B}}\rightarrow{\mathcal{B}}_H$ is monotonic and invariant on $H$-symmetric sets (and therefore projection invariant, by Theorem~\ref{Invariance} below), but not strictly monotonic, $V_j$-preserving for any $j\in \{1,\dots,n\}$, or invariant under translations orthogonal to $H$ of $H$-symmetric sets.\qed}
\end{ex}

\begin{ex}\label{ex3nww}
{\em Let $j\in \{1,\dots,n\}$. For $K\in {{\mathcal{B}}}$, define $L_K = \conv(\overline{K}\cup (K|H))$, where $\overline{K}$ is the (possibly empty) closure of the union of all $H$-symmetric spherical cylinders contained in $K$.  By (\ref{vjineq}), we may choose $t_K\ge 0$ such that $V_j(L_K+t_K(B^n\cap H^{\perp}))=V_j(K)$ and define
$$\di K=L_K+t_K(B^n\cap H^{\perp})=\conv(\overline{K}\cup (K|H))+t_K(B^n\cap H^{\perp}).$$
If ${{\mathcal{B}}}={\mathcal{K}}^n_n$, then clearly $\dim \di K=\dim L_K=n$ when $\overline{K}\neq\emptyset$.  If $\overline{K}=\emptyset$, then $L_K=K|H$ and $V_j(K)>V_j(L_K)$; then $t_K>0$ and again we have $\dim \di K=n$.  Hence $\di:{\mathcal{B}}\rightarrow {\mathcal{B}}_{H}$.
Then $\di$ is idempotent, $V_j$-preserving, invariant on $H$-symmetric spherical cylinders, and projection invariant.  However, $\di$ is not monotonic, as we show below, and neither invariant on $H$-symmetric sets nor invariant under translations orthogonal to $H$ of $H$-symmetric sets.

The fact that $\di$ is not monotonic when $j=1$ or $j=n$ is a consequence of Theorem~\ref{MinChar} and Corollary~\ref{Steiner} below, respectively, but can also be seen directly for all $j\in \{1,\dots,n\}$.  Let $C\subset H$ be an $i$-dimensional compact convex set and let $K=C+(B^n\cap H^{\perp})+y$, where $y\in H^{\perp}$ is such that $K\cap H=\emptyset$ and hence $\overline{K}=\emptyset$.  Let $L\in {\mathcal{K}}^n_n$ satisfy $K\subset L\subset (L|H)+(B^n\cap H^{\perp})+y$, where both inclusions are strict.  Since $K|H=C$, we obtain
$$\di K=C+t_K(B^n\cap H^{\perp})=C+(B^n\cap H^{\perp})=K-y,$$
because only $t_K=1$ gives $V_j(\di K)=V_j(K)$.  Also, since $\overline{L}=\emptyset$, we have
$$\di L=(L|H)+t_L(B^n\cap H^{\perp}).$$
If $t_L\ge 1$, then $\di L$ strictly contains $L-y$, so $V_j(\di L)>V_j(L)$. Therefore $t_L<1$, which implies that $\di K\not\subset \di L$.  \qed}
\end{ex}

\begin{ex}\label{ex6}
{\em Let $j\in \{1,\dots,n-i\}$ and let $L\in {\mathcal{K}}^n_s$ satisfy $L\subset H^{\perp}$ and $V_{j}(L)=V_j(B^n\cap H^{\perp})$. For $K\in {{\mathcal{B}}}$, define
$$\di K=\bigcup\{r_xL+x: x\in K|H\},$$
where $r_x\ge 0$ is chosen so that
$$V_{j}(r_xL)=V_{j}\left(K\cap (H^{\perp}+x)\right)$$
for all $x\in K|H$.  Then the Brunn-Minkowski inequality for quermassintegrals \cite[(74), p.~393]{Gar02} implies that $\di:{\mathcal{B}}\rightarrow {\mathcal{B}}_{H}$ is an $i$-symmetrization.  If $i=n-1$, then $\di$ is Steiner symmetrization and if $i\in \{1,\dots,n-2\}$, $j=n-i$, and $L=B^n\cap H^{\perp}$, then $\di$ is Schwarz symmetrization. Moreover, if $i\in \{1,\dots,n-2\}$, then $\di$ is monotonic (strictly monotonic if and only if ${{\mathcal{B}}}={\mathcal{K}}^n_n$ or $j=1$), volume preserving if $j=n-i$, idempotent, and projection invariant, but not invariant on $H$-symmetric spherical cylinders unless $L=B^n\cap H^{\perp}$ and not invariant on $H$-symmetric sets.
\qed}
\end{ex}

\begin{ex}\label{exoct2}
{\em Let $j\in \{1,\dots,n-1\}$ and let $K\in {\mathcal{B}}$. If $K=L+y$, where $L$ is $H$-symmetric and $y\in H^{\perp}$, then define $\di K=L$.  Otherwise, define $\di K=t_KB^n$, where $t_K\ge 0$ is chosen so that $V_j(\di K)=V_j(K)$. Then ${\di}:{\mathcal{B}}\rightarrow{\mathcal{B}}_H$ is $V_j$-preserving and invariant on $H$-symmetric sets, but not monotonic or projection invariant.\qed}
\end{ex}

\begin{ex}\label{exoct}
{\em Let $j\in \{1,\dots,n-1\}$.  For all $K\in {\mathcal{B}}$, let $x_K$ be the centroid of $K|H$.   Choose $r_K\ge 0$ such that $C_K=\conv((K|H)\cup( r_K(B^n\cap H^{\perp})+x_K))$ satisfies $V_j(C_K)=V_j(K)$, and define $\di K=C_K$. Then ${\di}:{\mathcal{B}}\rightarrow{\mathcal{B}}_H$ is $V_j$-preserving, idempotent, and projection invariant, but not monotonic or invariant on $H$-symmetric spherical cylinders.\qed}
\end{ex}

\begin{ex}\label{ex1Gab}
{\em  Let $H\in {\mathcal{G}}(n,i)$, $i\in\{1,\dots,n-1\}$, and let ${\mathcal{B}}={\mathcal{K}}^n$ or ${\mathcal{B}}={\mathcal{K}}^n_n$.  Define
${\di}:{\mathcal{B}}\rightarrow{\mathcal{B}}_H$ by (\ref{Msymme}) with $c=1$ and $M=[(1/4,3/4),(3/4,1/4)]$.  Theorem~\ref{corrM} implies that this definition is valid and that $\di$ is strictly monotonic, invariant on $H$-symmetric sets, and projection invariant, but not invariant under translations orthogonal to $H$ of $H$-symmetric sets.
\qed}
\end{ex}

Table~\ref{table2} summarizes the properties (other than projection covariance, which is irrelevant for the sequel) of symmetrizations in this section.  For simplicity a few special cases are ignored; for example, the symmetrizations in Examples~\ref{Vex2} and~\ref{ex6} are strictly monotonic when $j=1$.

\begin{table}
\begin{center}
\begin{tabular}{|c l|c|c|c|c|c|c|c|c|} \hline
 & &\rotatebox[origin=c]{90}{Monotonic} & \rotatebox[origin=c]{90}{$V_j$-preserving} & \rotatebox[origin=c]{90}{Idempotent} & \rotatebox[origin=c]{90} {Inv. $H$-sym. sets} & \rotatebox[origin=c]{90}{Inv.~$H$-sym.~sph.~cyl.} & \rotatebox[origin=c]{90}{Projection inv.} & \rotatebox[origin=c]{90}{\ Inv. translations\ }  \\
  & &1&2&3&4&5&6&7 \\ \hline
$I_H$, & $1\leq i\leq n-2$ & \checkmark & \small{\ding{53}} & \checkmark & \small{\ding{53}} & \checkmark & \checkmark & \checkmark  \\ \hline
$O_H$, & $1\leq i\leq n-2$ & s\checkmark& \small{\ding{53}} & \checkmark & \small{\ding{53}} & \checkmark & \checkmark & \checkmark  \\ \hline
\ref{ex1nwwmar}, & $i\neq 1$ & \checkmark & \small{\ding{53}} & \checkmark & \small{\ding{53}} & \checkmark & \small{\ding{53}} & \checkmark  \\ \hline 
\ref{exSHcont}, & $i\neq 1$ & \small{\ding{53}} & \small{\ding{53}} & \checkmark & \small{\ding{53}} & \checkmark & \small{\ding{53}} & \checkmark  \\ \hline 
\ref{ex1nww} & & \small{\ding{53}} & \small{\ding{53}} & \checkmark & \checkmark & \checkmark & \small{\ding{53}} & \checkmark  \\ \hline 
\ref{ex2} &  & s\checkmark & \small{\ding{53}} & \small{\ding{53}} & \small{\ding{53}} & \small{\ding{53}} & \checkmark & \checkmark  \\ \hline 
\ref{lastex}, & $i\neq 1$ & s\checkmark & \small{\ding{53}} & \small{\ding{53}} & \small{\ding{53}} & \checkmark & \small{\ding{53}} & \checkmark  \\ \hline 
\ref{ex2vi} &  & \checkmark & \small{\ding{53}} & \checkmark & \small{\ding{53}} & \small{\ding{53}} & \checkmark & \checkmark  \\ \hline 
\ref{Vexlast}& & s\checkmark & \small{\ding{53}} & \checkmark & \checkmark & \checkmark & \checkmark & \small{\ding{53}}  \\ \hline 
\ref{Vexlas2}& & \small{\ding{53}} & \small{\ding{53}} & \checkmark & \checkmark & \checkmark & \checkmark & \checkmark  \\ \hline 
\ref{Vex2}& & \checkmark & $V_{j}$ & \checkmark & \small{\ding{53}} & \small{\ding{53}} & \small{\ding{53}} & \checkmark  \\ \hline 
\ref{ex2nww}& & \checkmark & \small{\ding{53}} & \checkmark & \checkmark & \checkmark & \checkmark & \small{\ding{53}}  \\ \hline 
\ref{ex3nww}& & \small{\ding{53}} & $V_{j}$ & \checkmark & \small{\ding{53}} & \checkmark & \checkmark & \small{\ding{53}}  \\ \hline 
\ref{ex6}, & $1\leq i\leq n-2$ & \checkmark & $\ast$ & \checkmark & \small{\ding{53}} & \small{\ding{53}} & \checkmark & \checkmark \\ \hline 
\ref{exoct2}& & \small{\ding{53}} & $V_{j}$ & \checkmark & \checkmark & \checkmark & \small{\ding{53}} & \checkmark \\ \hline 
\ref{exoct}& & \small{\ding{53}} & $V_{j}$ & \checkmark & \small{\ding{53}} & \small{\ding{53}} & \checkmark & \checkmark \\ \hline 
\ref{ex1Gab}& & s\checkmark & \small{\ding{53}} & \checkmark & \checkmark & \checkmark & \checkmark & \small{\ding{53}} \\ \hline 
\end{tabular}
\end{center}
\vspace{.2in}
\caption{Properties, numbered as in Section~\ref{symm}, of the various examples of symmetrizations from Section~\ref{further}, where s\checkmark indicates strictly monotonic and where $\ast$ indicates $V_{n-i}$-preserving when $j=n-i$.}
\label{table2}
\end{table}

\section{Relations between properties}\label{relations}

It was noted above that invariance on $H$-symmetric sets implies invariance on $H$-symmetric spherical cylinders and idempotence.  In this section we establish some less obvious relations between the properties considered in Section~\ref{symm}.

Note that the conclusion in the following result holds trivially when $i=0$.

\begin{thm}\label{Invariance}
Let $i\in\{1,\dots,n-1\}$, let $H\in {\mathcal{G}}(n,i)$, and let ${\mathcal{B}}={\mathcal{K}}^n$ or ${\mathcal{B}}={\mathcal{K}}^n_n$.  Suppose that ${\di}:{\mathcal{B}}\rightarrow{\mathcal{B}}_H$ is monotonic and either

{\rm{(i)}} $i=1$ and $\di$ is invariant on $H$-symmetric spherical cylinders, or

{\rm{(ii)}} $i\in\{2,\dots,n-1\}$ and ${\di}$ is invariant on $H$-symmetric sets.

Then $\di$ is projection invariant.
\end{thm}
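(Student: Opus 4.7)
The plan is to establish projection invariance by proving both inclusions via a sandwich argument, using an $H$-symmetric set containing $K$ for one direction and small $H$-symmetric spherical cylinders around a point (or a small ball around a point) of $K$ for the other. For the inclusion $(\di K)|H \subset K|H$, I set $C = (K|H) + R(B^n \cap H^\perp)$ with $R$ large enough that $K \subset C$; since $K|H \subset H$ and $R(B^n \cap H^\perp)$ is $o$-symmetric in $H^\perp$, the set $C$ is $H$-symmetric. In case (ii), invariance on $H$-symmetric sets gives $\di C = C$, and monotonicity then yields $\di K \subset C$, so $(\di K)|H \subset C|H = K|H$. In case (i) the $1$-dimensional projection $K|H$ is a segment $D_r(c e_1)$ in $H$, so $C$ is an $H$-symmetric spherical cylinder whenever $r > 0$ and the cylinder-invariance hypothesis applies directly; the degenerate case $r = 0$ is handled by passing to the limit over cylinders $D_\epsilon(c e_1) + R(B^n \cap H^\perp)$ as $\epsilon \to 0$.

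For the reverse inclusion $K|H \subset (\di K)|H$, the central observation is that any nonempty, $H$-symmetric, compact convex set $A$ satisfies $A|H = A \cap H$: for any $q \in A$, the reflection $q^\dagger$ is in $A$, and so $q|H = (q + q^\dagger)/2 \in A$ by convexity. Hence it suffices to show $x \in \di K$ for every $x \in K|H$. Pick $p \in K$ with $p|H = x$ and set $y = p - x \in H^\perp$. For each $\epsilon > 0$, the spherical cylinder $C_\epsilon = D_\epsilon(x) + (|y| + \epsilon)(B^n \cap H^\perp)$ contains $p$, so under either hypothesis (in case (ii), cylinder invariance follows from invariance on $H$-symmetric sets) monotonicity gives $\di \{p\} \subset C_\epsilon$. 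Intersecting over $\epsilon > 0$ yields $\di \{p\} \subset \{x\} + |y|(B^n \cap H^\perp)$, a ball in the affine subspace $H^\perp + x$. Since $\di \{p\}$ is a nonempty, $H$-symmetric, convex subset of this ball, every one of its points has $H$-projection equal to $x$, so the midpoint identity applied inside $\di \{p\}$ forces $x \in \di \{p\} \subset \di K$, finishing the proof when $\mathcal{B} = \mathcal{K}^n$. When $\mathcal{B} = \mathcal{K}^n_n$ the singleton $\{p\}$ is not admissible; for $x \in \relint(K|H)$ I instead pick $p \in \inte K$ with $p|H = x$, take a small ball $B_\delta(p) \subset K$, and run the same sandwich with the cylinder $D_\delta(x) + (|y| + \delta)(B^n \cap H^\perp)$. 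This produces a point $x_\delta \in \di B_\delta(p) \cap H \subset \di K$ with $|x_\delta - x| \leq \delta$, so compactness of $\di K$ delivers $x \in \di K$ as $\delta \to 0$, and the full projection $K|H$ is recovered by the closure of $(\di K)|H$.

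The main obstacle is that under the hypotheses there is generally no $H$-symmetric subset of $K$ whose $H$-projection equals $K|H$ (when $K$ is a segment oblique to $H$, the largest $H$-symmetric subset of $K$ is a single point), so one cannot directly sandwich $K$ from inside with an $H$-symmetric set. The remedy is to abandon this in favour of the shrinking-cylinder sandwich around an individual point of $K$, coupled with the structural fact that an $H$-symmetric convex subset of a ball in $H^\perp + x$ is forced to contain $x$. The reason case (ii) requires the stronger invariance hypothesis is that for $i \geq 2$, $K|H$ need not be a ball in $H$, so the outer sandwich $C$ is $H$-symmetric but not a spherical cylinder; for $i = 1$ this $C$ is automatically a spherical cylinder and the weaker cylinder-invariance hypothesis suffices.
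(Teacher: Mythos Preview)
Your proof is correct. The outer inclusion $(\di K)|H\subset K|H$ is handled exactly as in the paper, via the $H$-symmetric set $C=(K|H)+R(B^n\cap H^\perp)$; you are in fact slightly more careful than the paper in treating the degenerate case $i=1$, $\dim(K|H)=0$.

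For the reverse inclusion the two arguments diverge. The paper argues by contradiction: if some ball $B\subset(K|H)\setminus((\di K)|H)$, it sets $M=B+R_1(B^n\cap H^\perp)$ and applies monotonicity and cylinder invariance to $K\cap M$, obtaining the nonempty set $(\di(K\cap M))|H\subset B\cap((\di K)|H)$. Your route is direct: you sandwich a single point $p$ of $K$ (or a small ball $B_\delta(p)$ when $\mathcal{B}=\mathcal{K}^n_n$) inside shrinking spherical cylinders to force $\di\{p\}$ into the fiber $H^\perp+x$, and then invoke the structural identity $A|H=A\cap H$ for $H$-symmetric convex $A$ to extract $x\in\di\{p\}\subset\di K$. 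The paper's intersection trick has the virtue of treating $\mathcal{K}^n$ and $\mathcal{K}^n_n$ uniformly, whereas your argument splits into two cases; on the other hand, your argument is constructive and isolates the identity $A|H=A\cap H$ as the reason an $H$-symmetric symmetral must cover each point of $K|H$, which is conceptually clean.
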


\begin{proof}
Let ${\mathcal{B}}={\mathcal{K}}^n$ or ${\mathcal{K}}^n_n$.  Let $K\in {\mathcal{B}}$ and recall that $K\neq\emptyset$ by the definition of ${\mathcal{K}}^n$. Choose $R_0>0$ such that $L=(K|H)+ (R_0B^n\cap H^{\perp})$ contains $K$.  Then $L\in {\mathcal{B}}$ is an $H$-symmetric set and if $i=1$, then $L$ is an $H$-symmetric spherical cylinder.  Our assumptions in (i) and (ii) imply that $\di L=L$.  The monotonicity of $\di$ yields $\di K\subset \di L$, so
$$(\di K)|H\subset (\di L)|H=L|H=K|H.$$

To prove the opposite containment, suppose to the contrary that there is a ball $B$  with $\dim B=\dim(K|H)$ such that $B\subset (K|H)\setminus ((\di K)|H)$.  Choose $R_1>0$ such that if $M=B+(R_1B^n\cap H^{\perp})$, then $K\cap (B\times H^{\perp})\subset M$.  Since $K\cap M\in {\mathcal{B}}$, $M\in {\mathcal{B}}$, and $M^{\dagger}=M$, the monotonicity of $\di$ and its invariance on $H$-symmetric spherical cylinders imply that $\di(K\cap M)\subset \di M=M$ and hence $(\di(K\cap M))|H\subset M|H=B$.  From $K\cap M\neq \emptyset$ we conclude that $\di(K\cap M)$ is defined and nonempty, and thus $(\di(K\cap M))|H\neq \emptyset$.  The inclusion $K\cap M\subset K$ gives $(\di(K\cap M))|H\subset (\di K)|H$.  Consequently, $B\cap ((\di K)|H)\neq \emptyset$.  This contradiction shows that $K|H\subset (\di K)|H$.  Therefore $(\di K)|H=K|H$ and $\di$ is projection invariant.
\end{proof}

The proof of the previous theorem shows that in (ii) it is only necessary to assume that $\di$ is invariant on sets of the form $(K|H)+s(B^n\cap H^{\perp})$, where $K\in {\mathcal{B}}$ and $s>0$.  The latter assumption is still stronger than invariance on $H$-symmetric spherical cylinders, which Example~\ref{ex1nwwmar} shows does not suffice.  Examples~\ref{ex1nww} and \ref{Vex2} show that the other assumptions cannot be omitted and Example~\ref{ex2} shows that the converse of Theorem~\ref{Invariance} is false in the sense that projection invariance does not imply invariance on $H$-symmetric spherical cylinders in the presence of monotonicity.

Next, we focus on the idempotent property.

\begin{lem}\label{Gab1}
Let $H\in {\mathcal{G}}(n,i)$, $i\in\{0,\dots,n-1\}$, and let ${\mathcal{B}}\subset {\mathcal{C}}^n$.  Suppose that ${\di}:{\mathcal{B}}\rightarrow{\mathcal{B}}_H$ is strictly monotonic and idempotent. If $K\in {\mathcal{B}}$ and either $\di K\subset K$ or $\di K\supset K$, then $\di K=K$.
\end{lem}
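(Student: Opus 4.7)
The plan is to derive a contradiction in each case using strict monotonicity applied to a strict inclusion, and then invoking idempotence to kill the resulting strict inclusion of symmetrals.

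Suppose first that $\di K\subset K$ but $\di K\neq K$. Then $\di K\subsetneq K$ is a strict inclusion of elements of $\mathcal{B}$, so by strict monotonicity I obtain $\di(\di K)\subset \di K$ together with $\di(\di K)\neq \di K$. This directly contradicts idempotence, which asserts $\di(\di K)=\di K$. Hence $\di K=K$ in this case.

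The case $\di K\supset K$ is symmetric: if $K\subsetneq \di K$ is a strict inclusion, then strict monotonicity gives $\di K\subsetneq \di(\di K)$, which again contradicts $\di(\di K)=\di K$. Thus $\di K=K$.

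There is no real obstacle; the point of the lemma is simply that strict monotonicity rules out any strict inclusion between $K$ and its symmetral once idempotence has fixed the symmetral under a second application of $\di$. No property of $H$, of the ambient dimension, or of the class $\mathcal{B}$ beyond the abstract assumptions is used.
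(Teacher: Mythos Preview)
Your proof is correct and follows essentially the same approach as the paper's: assume a proper inclusion, apply strict monotonicity to obtain a proper inclusion between $\di^2 K$ and $\di K$, and contradict idempotence. The paper's argument is phrased nearly identically and is equally brief.
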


\begin{proof}
If $K\in {\mathcal{B}}$ and the inclusion $\di K\subset K$ is proper, then so is the inclusion $\di^2 K\subset \di K$, by the strict monotonicity of $\di$.  This contradicts the equality $\di^2K =\di K$ provided by the idempotence of $\di$.  A similar argument applies when the inclusion $\di K\supset K$ holds.
\end{proof}

\begin{thm}\label{Gab3}
Let $H\in {\mathcal{G}}(n,n-1)$ and let ${\mathcal{B}}={\mathcal{K}}^n$ or ${\mathcal{B}}={\mathcal{K}}^n_n$.  Suppose that ${\di}:{\mathcal{B}}\rightarrow{\mathcal{B}}_H$ is strictly monotonic and idempotent.  The following are equivalent:

{\rm{(i)}}  $\di$ is invariant on $H$-symmetric sets;

{\rm{(ii)}}  $\di$ is invariant on $H$-symmetric spherical cylinders;

{\rm{(iii)}}  $\di$ is projection invariant.
\end{thm}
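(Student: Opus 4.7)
The plan is to establish the cyclic chain of implications (i)$\Rightarrow$(ii)$\Rightarrow$(iii)$\Rightarrow$(i). The first implication is immediate, since every $H$-symmetric spherical cylinder belongs to $\mathcal{B}_H$. For (ii)$\Rightarrow$(iii), I would pass through (i): once (ii)$\Rightarrow$(i) is established, part~(ii) of Theorem~\ref{Invariance} applies when $n\ge 3$ (so $i=n-1\ge 2$), while when $n=2$ the fact that (i) trivially implies (ii) lets part~(i) of Theorem~\ref{Invariance} apply (with $i=1$).

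To prove (ii)$\Rightarrow$(i), let $L\in\mathcal{B}_H$. Because $i=n-1$, $L$ is rotationally symmetric about $H$, so there is a concave function $f:L|H\to[0,\infty)$ such that $L\cap(H^{\perp}+x)=[x-f(x)u,\,x+f(x)u]$ for every $x\in L|H$, where $u$ is a unit vector in $H^{\perp}$. Write $\di L$ analogously via a concave height $g$ on $(\di L)|H$. For $x_0$ in the relative interior of $L|H$ and $r>0$ small enough that $B_r(x_0)\subset L|H$, set $s(x_0,r)=\min_{x\in B_r(x_0)}f(x)$; the set $C_{x_0,r}=B_r(x_0)+s(x_0,r)(B^n\cap H^{\perp})$ is then an $H$-symmetric spherical cylinder contained in $L$. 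By (ii) and monotonicity, $C_{x_0,r}=\di C_{x_0,r}\subset\di L$, which forces $B_r(x_0)\subset(\di L)|H$ and $s(x_0,r)\le g(x)$ for every $x\in B_r(x_0)$. Letting $r\to 0$ and using continuity of concave functions on their relative interior yields $f\le g$ on the relative interior of $L|H$, so $L\subset\di L$ after passing to closures. Strict monotonicity and idempotence together with Lemma~\ref{Gab1} then force $\di L=L$.

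For the remaining direction (iii)$\Rightarrow$(i), let $L\in\mathcal{B}_H$ and set $M=\di L$. By (iii) and the $H$-symmetry of $M$, both $L$ and $M$ project onto the same set $B:=L|H$, and their sections orthogonal to $H$ are described by concave height functions $f$ and $g$ on $B$. If $f\le g$ on $B$ with $f\ne g$, then $L\subsetneq M$, and strict monotonicity combined with the identity $\di M=M$ gives the contradiction $M=\di L\ne\di M=M$; the case $g\le f$ with $f\ne g$ is symmetric. The case where $f$ and $g$ are incomparable on $B$ is the main obstacle. My approach here is to apply $\di$ to the auxiliary $H$-symmetric bodies $L\cap M$ and $\conv(L\cup M)$, which both project onto $B$ and are, respectively, strictly contained in and strictly contain both $L$ and $M$. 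Strict monotonicity and idempotence then produce a strictly smaller fixed point $\di(L\cap M)\subsetneq M$ and a strictly larger one $\di\,\conv(L\cup M)\supsetneq M$, and iterating this construction yields descending and ascending chains of fixed points of $\di$ in the family of $H$-symmetric bodies with projection $B$. Combining these chains with Lemma~\ref{Gab1} applied to their Hausdorff limits should force $f=g$, hence $\di L=L$. Carrying through this final contradiction, without any assumed continuity or translation invariance of $\di$, is the most delicate step of the argument.
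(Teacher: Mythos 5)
Your reduction is sound in outline: (i)$\Rightarrow$(ii) is trivial, (ii)$\Rightarrow$(iii) via (ii)$\Rightarrow$(i) and Theorem~\ref{Invariance} is exactly how the paper gets projection invariance, and your (ii)$\Rightarrow$(i) for full-dimensional $L$ is in substance the paper's argument (exhaust $L$ from inside by $H$-symmetric spherical cylinders, conclude $L\subset\di L$, finish with Lemma~\ref{Gab1}). The genuine gap is the direction involving projection invariance. Your direct attack on (iii)$\Rightarrow$(i) is not a proof: in the ``incomparable'' case you only describe ascending and descending chains of fixed points of $\di$ and assert that Hausdorff limits together with Lemma~\ref{Gab1} ``should force'' $f=g$. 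Nothing in the hypotheses supports this: $\di$ is not assumed continuous, so Hausdorff limits of fixed points need not be fixed points; there is no monotone quantity making the chains converge to anything tied to $L$ or $M=\di L$; and Lemma~\ref{Gab1} applies only to nested pairs, which is precisely what fails here. The missing idea is \emph{localization}, and this is why the paper proves (iii)$\Rightarrow$(ii) instead of (iii)$\Rightarrow$(i): if $K$ is an $H$-symmetric spherical cylinder and $\di K$ neither contains nor is contained in $K$, there is an $x\in\relint(K|H)$ where the section $(\di K)\cap(H^{\perp}+x)$ is properly inside $K\cap(H^{\perp}+x)$; taking $C_r(x)=D_r(x)\times H^{\perp}$ with $r$ small, monotonicity gives $\di(K\cap C_r(x))\subset\di K$, while projection invariance pins $\di(K\cap C_r(x))$ inside $C_r(x)$, so $\di(K\cap C_r(x))$ is properly contained in $K\cap C_r(x)$, contradicting Lemma~\ref{Gab1}. (The same localization would also repair your direct route: at an interior point where $g<f$, continuity of $f$ and $g$ gives $g<f$ on a whole ball $D_r(x)$, and the identical contradiction applies to $L\cap C_r(x)$.)

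There is also a secondary gap in your (ii)$\Rightarrow$(i): the argument only covers $L$ whose height function is positive on an $(n-1)$-dimensional $\relint(L|H)$. When ${\mathcal{B}}={\mathcal{K}}^n$, an $H$-symmetric set of dimension less than $n$ (for instance a set contained in $H$, or a segment orthogonal to $H$) contains no $H$-symmetric spherical cylinder at all, and your construction produces nothing. The paper disposes of this case by applying the full-dimensional result to $K+\ee B^n$, so that monotonicity yields $\di K\subset K+\ee B^n$ for every $\ee>0$, hence $\di K\subset K$, and Lemma~\ref{Gab1} again gives $\di K=K$. With these two repairs---the cylinder localization for the projection-invariance direction and the $\ee$-fattening for degenerate sets---your outline becomes essentially the paper's proof.
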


\begin{proof}
The implication (i)$\Rightarrow$(iii) holds by Theorem~\ref{Invariance}.

(iii)$\Rightarrow$(ii) Let $K$ be an $H$-symmetric spherical cylinder.  If $\di K\subset K$ or $\di K\supset K$, then $\di K=K$ by Lemma~\ref{Gab1}.  Otherwise, since $\dim H=n-1$, there is an $x\in \relint K|H=\relint(\di K)|H$ such that $(\di K)\cap (H^{\perp}+x)$ is properly contained in $K\cap (H^{\perp}+x)$.  We may then choose $r>0$ so that $(\di K)\cap C_r(x)$ is properly contained in $K\cap C_r(x)$, where $C_r(x)=D_r(x)\times H^{\perp}$ and $D_r(x)\subset H$ is the $(n-1)$-dimensional ball with center $x$ and radius $r>0$.  The monotonicity of $\di$ implies that $\di(K\cap C_r(x))\subset \di K$ and the projection invariance of $\di$ yields $\di(K\cap C_r(x))\subset C_r(x)$. Therefore $\di(K\cap C_r(x))\subset (\di K)\cap C_r(x)$ and it follows that $\di(K\cap C_r(x))$ is properly contained in $K\cap C_r(x)$. This contradicts Lemma~\ref{Gab1}, applied to the set $K\cap C_r(x)$.

(ii)$\Rightarrow$(i) Assume that $K\in {\mathcal{B}}$ is an $H$-symmetric set and initially also that $\dim K=n$.  If $L$ is an $H$-symmetric spherical cylinder with $L\subset K$, then $L=\di L\subset \di K$.  Since $\dim H=n-1$ and $\dim K=n$, it is easy to see that $K$ is the closure of the union of the $H$-symmetric spherical cylinders contained in it. Consequently, $K\subset \di K$ and it follows from Lemma~\ref{Gab1} that $\di K=K$.

Now assume that $K$ is an $H$-symmetric set and $\dim K<n$. For each $\ee>0$, $K+\ee B^n$ is $H$-symmetric and $\dim(K+\ee B^n)=n$, so $\di K\subset \di(K+\ee B^n)=K+\ee B^n$. Since $K=\cap\{K+\ee B^n:\ee>0\}$, this shows that $\di K\subset K$ and we conclude that $\di K=K$ as before.
\end{proof}

Assuming $i=n-1$, Examples~\ref{ex1nwwmar}, \ref{ex2}, \ref{lastex},  \ref{ex2vi}, and \ref{Vex2} with $j=1$ show that neither (ii)$\Rightarrow$(iii) nor (iii)$\Rightarrow$(ii) holds if any of the assumptions in Theorem~\ref{Gab3} (strict monotonicity, idempotence, and either invariance on $H$-symmetric spherical cylinders or projection invariance) is omitted or strict monotonicity is weakened to monotonicity.  Since (i)$\Rightarrow$(ii) is always true by the definitions and (i)$\Rightarrow$(iii) holds in the presence of monotonicity alone by Theorem~\ref{Invariance}, no other implications need be considered.  That the restriction $\dim H=n-1$ cannot be dropped in (iii)$\Rightarrow$(ii) is shown by Example~\ref{ex6} with $j=1$ and $L\neq B^n\cap H^{\perp}$, while outer rotational symmetrization serves the same purpose for (iii)$\Rightarrow$(i) and (ii)$\Rightarrow$(i).  We do not have an example to show that the assumption $\dim H=n-1$ is needed in (ii)$\Rightarrow$(iii); see Problem~\ref{prob1}.

The next lemma will allow us to prove Theorem~\ref{SchwarzFpre}, a variant of Theorem~\ref{Gab3} that achieves the same conclusion with different assumptions.

\begin{lem}\label{SchwarzFGab}
Let $H\in {\mathcal{G}}(n,i)$, $i\in\{0,\dots,n-1\}$, let ${\mathcal{B}}={\mathcal{K}}^n$ or ${\mathcal{B}}={\mathcal{K}}^n_n$, and let $F:{\mathcal{B}}\to [0,\infty)$ be a strictly increasing set function invariant under translations orthogonal to $H$ of $H$-symmetric sets.  Suppose that ${\di}:{\mathcal{B}}\rightarrow{\mathcal{B}}_H$ is monotonic and $F$-preserving.
If $K\in {\mathcal{B}}$ is $H$-symmetric and either $\di(K+y)\subset K$ or $\di(K+y)\supset K$ for some $y\in H^{\perp}$, then $\di(K+y)=K$.
\end{lem}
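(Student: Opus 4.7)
The plan is to mimic the strategy of Lemma~\ref{Gab1}, but replace the idempotence/strict-monotonicity of $\di$ with the $F$-preserving/$F$-strictly-increasing properties, using the translation invariance of $F$ to compare $K+y$ with $K$.

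First I would reduce both cases to a comparison of the values of $F$. Since $\di$ is $F$-preserving, we have $F(\di(K+y)) = F(K+y)$. Because $K$ is $H$-symmetric, $y \in H^\perp$, and $F$ is invariant under translations orthogonal to $H$ of $H$-symmetric sets, it follows that $F(K+y) = F(K)$. Hence in both cases $F(\di(K+y)) = F(K)$.

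Next I would rule out strict inclusion using the strict monotonicity of $F$. If $\di(K+y) \subset K$ were a proper inclusion, the fact that $F$ is strictly increasing would yield $F(\di(K+y)) < F(K)$, contradicting the equality above; thus $\di(K+y) = K$. Symmetrically, if $\di(K+y) \supset K$ were proper, strict monotonicity of $F$ would give $F(\di(K+y)) > F(K)$, again a contradiction, so $\di(K+y) = K$.

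There is no real obstacle here; the lemma follows almost tautologically once the translation invariance of $F$ on $H$-symmetric sets is used to identify $F(K+y)$ with $F(K)$. The only subtlety to record carefully in the write-up is that the monotonicity of $\di$ itself is not actually used in this lemma (it enters the applications in Theorem~\ref{SchwarzFpre}), and that the argument works uniformly for the two cases $\mathcal{B}=\mathcal{K}^n$ and $\mathcal{B}=\mathcal{K}^n_n$ since neither $F$-preservation nor the translation invariance hypothesis depend on dimension.
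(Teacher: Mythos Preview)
Your proof is correct and follows essentially the same argument as the paper's: both use $F(\di(K+y))=F(K+y)=F(K)$ via $F$-preservation and translation invariance, then derive a contradiction from strict monotonicity of $F$ if either inclusion were proper. Your additional observation that the monotonicity of $\di$ is not actually used in this lemma is accurate and worth noting.
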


\begin{proof}
If $K\in {\mathcal{B}}$ is $H$-symmetric, $y\in H^{\perp}$, and $\di(K+y)$ is a proper subset of $K$, then since $F$ is strictly increasing, $F(\di(K+y))< F(K)$. But $F(\di(K+y))=F(K+y)=F(K)$, because $\di$ is $F$-preserving and $F$ is invariant under translations orthogonal to $H$ of $H$-symmetric sets.  A similar contradiction is reached if $K$ is a proper subset of $\di(K+y)$.
\end{proof}

\begin{thm}\label{SchwarzFpre}
Let $H\in {\mathcal{G}}(n,n-1)$, let ${\mathcal{B}}={\mathcal{K}}^n$ or ${\mathcal{B}}={\mathcal{K}}^n_n$, and let $F:{\mathcal{B}}\to [0,\infty)$ be a strictly increasing set function. If ${\di}:{\mathcal{B}}\rightarrow{\mathcal{B}}_H$ is monotonic and $F$-preserving, the following are equivalent:

{\rm{(i)}}  $\di$ is invariant on $H$-symmetric sets;

{\rm{(ii)}}  $\di$ is invariant on $H$-symmetric spherical cylinders;

{\rm{(iii)}}  $\di$ is projection invariant.
\end{thm}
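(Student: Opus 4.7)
The theorem parallels Theorem~\ref{Gab3}, with the pair (monotonic, $F$-preserving) replacing (strictly monotonic, idempotent). The plan is to follow exactly the same three-step structure, substituting Lemma~\ref{Gab1} with the following direct observation: since $\di$ is $F$-preserving and $F$ is strictly increasing, for any $K\in \mathcal{B}$, neither $\di K\subsetneq K$ nor $K\subsetneq \di K$ can occur, because either would force $F(\di K)\neq F(K)$. Note that this observation does not require the invariance hypothesis on $F$ present in Lemma~\ref{SchwarzFGab}, so we use it directly rather than invoking that lemma.

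For (i)$\Rightarrow$(iii), this is immediate from Theorem~\ref{Invariance}, since invariance on $H$-symmetric sets implies invariance on $H$-symmetric spherical cylinders.

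For (iii)$\Rightarrow$(ii), let $K$ be an $H$-symmetric spherical cylinder, say $K=D_R(x_0)\times s(B^n\cap H^\perp)$. Assuming $\di K\neq K$, the observation rules out $\di K\subset K$ and $\di K\supset K$, so there exists $y=(x,z)\in K\setminus \di K$. Since $\di K$ is closed and $K$ is a cylinder, a small perturbation of $x$ inside $K|H$ keeps $(x,z)$ outside $\di K$ and inside $K$; hence we may assume $x\in \relint(K|H)$. Choose $r>0$ small enough that $D_r(x)\subset K|H$, so that $K\cap C_r(x)=D_r(x)\times s(B^n\cap H^\perp)$ is itself an $H$-symmetric spherical cylinder, where $C_r(x)=D_r(x)\times H^\perp$, and $y\in K\cap C_r(x)$. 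Monotonicity yields $\di(K\cap C_r(x))\subset \di K$, while projection invariance yields $(\di(K\cap C_r(x)))|H=D_r(x)$, hence $\di(K\cap C_r(x))\subset C_r(x)$. Combining, $\di(K\cap C_r(x))\subset (\di K)\cap C_r(x)\subsetneq K\cap C_r(x)$, which contradicts the observation applied to $K\cap C_r(x)$.

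For (ii)$\Rightarrow$(i), let $K\in \mathcal{B}$ be $H$-symmetric. Suppose first that $\dim K=n$. Because $\dim H=n-1$, one checks that $K$ is the closure of the union of the $H$-symmetric spherical cylinders contained in it (each interior point of $K$ lies in such a cylinder, using the symmetric ``graph form'' of $K$ over $\inte(K|H)$). Each such cylinder $L$ satisfies $L=\di L\subset \di K$ by (ii) and monotonicity, so $K\subset \di K$; the observation then gives $\di K=K$. If $\mathcal{B}=\mathcal{K}^n$ and $\dim K<n$, then for each $\ee>0$ the set $K+\ee B^n$ is $H$-symmetric of dimension $n$, so by the case just handled $\di(K+\ee B^n)=K+\ee B^n$; monotonicity gives $\di K\subset K+\ee B^n$, and letting $\ee\to 0^+$ yields $\di K\subset K$, whence $\di K=K$ again by the observation. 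The main obstacle is the step (iii)$\Rightarrow$(ii), specifically the perturbation argument to move the witness point $x$ into $\relint(K|H)$, but this follows from compactness of $\di K$ combined with the cylindrical structure of $K$.
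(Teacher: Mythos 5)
Your overall strategy is the same as the paper's: the paper proves this theorem by re-running the proof of Theorem~\ref{Gab3} with Lemma~\ref{Gab1} replaced by the $y=o$ case of Lemma~\ref{SchwarzFGab}, which is precisely your ``observation'' (and, as you note, the translation invariance of $F$ is not needed when $y=o$). Your steps (i)$\Rightarrow$(iii) and (ii)$\Rightarrow$(i) are fine. The gap is in (iii)$\Rightarrow$(ii), in the chain $\di(K\cap C_r(x))\subset(\di K)\cap C_r(x)\subsetneq K\cap C_r(x)$. Monotonicity and projection invariance do give the first inclusion, but the second, $(\di K)\cap C_r(x)\subset K\cap C_r(x)$, is not justified by your choice of $r$, which only requires $D_r(x)\subset K|H$. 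At this point of the argument nothing prevents $\di K$ from overshooting $K$ in the $H^{\perp}$ direction over part of $D_r(x)$: all you know is that at the single fiber over $x$ the (centered) section of $\di K$ has half-length less than $|z|\le s$, because $y=(x,z)\notin\di K$; over nearby fibers the sections of $\di K$ could well be longer than $2s$, since the section-length function of $\di K$ is only known to be concave and your $r$ has not been tied to it (with $r$ comparable to the distance from $x$ to the relative boundary of $K|H$, a concave section-length function can grow by a factor large enough to exceed $2s$). Without that inclusion you cannot conclude $\di(K\cap C_r(x))\subsetneq K\cap C_r(x)$, so the contradiction with the observation applied to $K\cap C_r(x)$ does not follow.

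The repair is exactly the extra care the paper builds into its choice of $x$ and $r$: since $\dim H=n-1$ and both $K$ and $\di K$ are $H$-symmetric, every section orthogonal to $H$ is a segment centered on $H$, so corresponding sections are nested; the section of $\di K$ over $x$ has half-length strictly less than $s$, and the function $x'\mapsto{\mathcal H}^1\left((\di K)\cap(H^{\perp}+x')\right)$ is concave, hence continuous, on $\relint\left((\di K)|H\right)=\relint(K|H)$, so after shrinking $r$ you may assume every section of $\di K$ over $D_r(x)$ has half-length less than $s$. This yields $(\di K)\cap C_r(x)\subset K\cap C_r(x)$, and properly so because of the point $y$; your contradiction then goes through. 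Incidentally, the step you flagged as the main obstacle---moving $x$ into $\relint(K|H)$---is unproblematic; the real issue is the choice of $r$ and the unproven direction of containment just described.
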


\begin{proof}
Note firstly that when $y=o$, the assumption that $F$ is invariant under translations orthogonal to $H$ of $H$-symmetric sets is not needed in Lemma~\ref{SchwarzFGab}.  Therefore to prove the equivalence of the conditions {\rm{(i)}}--{\rm{(iii)}}, we can follow the proof of Theorem~\ref{Gab3}, using Lemma~\ref{SchwarzFGab} with $y=o$ to replace Lemma~\ref{Gab1}, since this proof does not otherwise require the strict monotonicity or idempotence of $\di$.
\end{proof}

Assuming $i=n-1$ and taking $F=V_1$, Examples~\ref{ex2}, \ref{lastex}, \ref{Vex2} with $j=1$, and \ref{exoct}, together with Blaschke symmetrization with $F=V_{n-1}$, show that neither (ii)$\Rightarrow$(iii) nor (iii)$\Rightarrow$(ii) holds if any of the three other assumptions is omitted or strict monotonicity is weakened to monotonicity.  Examples~\ref{ex1nwwmar} and \ref{ex2vi} with $F(K)=V_1(\di K)$ show that it does not suffice to assume that $F$ is merely increasing. That the restriction $\dim H=n-1$ cannot be dropped in (iii)$\Rightarrow$(ii) is shown by Example~\ref{ex6} with $F=V_{n-i}$, $j=1$, and $L\neq B^n\cap H^{\perp}$, while Schwarz symmetrization with $F=V_n$ serves the same purpose for (iii)$\Rightarrow$(i) and (ii)$\Rightarrow$(i).  We do not have an example to show that the assumption $\dim H=n-1$ is needed in (ii)$\Rightarrow$(iii); see Problem~\ref{prob2}.

\section{The role of Steiner and Minkowski symmetrization}\label{idemp}

We now present some expressions for the Steiner (or, more generally, fiber) and Minkowski symmetrals that shed light on the relationship between them and which will find use in the sequel.  Recall that $F_HK=\triangle K=M_HK$ if $i=0$ and that $F_HK=S_HK$ if $i=n-1$, so the formula (\ref{Steinnew}) below provides, in particular, an expression for the Steiner symmetral.

\begin{thm}\label{Newchar}
Let $H\in {\mathcal{G}}(n,i)$, $i\in\{0,\dots,n-1\}$, and for $K\in {\mathcal{K}}^n$ and $y\in H^{\perp}$, let
\begin{equation}\label{ktd}
K_y=K+y~~\quad~~{\rm{and}}~~\quad~~K_y^{\dagger}=(K_y)^{\dagger}=K^{\dagger}-y.
\end{equation}
Then for $K\in {\mathcal{K}}^n$, we have
\begin{equation}\label{Steinnew}
F_HK=\bigcup_{y\in H^{\perp}}\,(K_y\cap K^{\dagger}_y)
\end{equation}
and
\begin{equation}\label{Minnew}
M_HK=\bigcap_{y\in H^{\perp}}\,\conv(K_y\cup K^{\dagger}_y).
\end{equation}
\end{thm}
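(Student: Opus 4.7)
My plan is to handle the two identities separately, since they use rather different ideas.

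For (\ref{Steinnew}), I would argue directly from the definition of the fiber combination. Writing a point $p=(x,w)$ with $x\in H$ and $w\in H^{\perp}$, the definition (\ref{Fiberdef}) of $F_HK$ gives that $p\in F_HK$ if and only if there exist $y,z\in H^{\perp}$ with $(x,y)\in K$, $(x,z)\in K^{\dagger}$, and $w=(y+z)/2$. The key algebraic observation is that setting $y'=(z-y)/2=w-y\in H^{\perp}$, the conditions $(x,y)\in K$ and $(x,z)\in K^{\dagger}$ become $p-y'\in K$ and $p+y'\in K^{\dagger}$, i.e., $p\in K_{y'}\cap K^{\dagger}_{y'}$. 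Conversely, if $p\in K_{y'}\cap K^{\dagger}_{y'}$ for some $y'\in H^{\perp}$, then setting $y=w-y'$ and $z=w+y'$ recovers the representation required by (\ref{Fiberdef}). This bijective correspondence yields (\ref{Steinnew}).

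For (\ref{Minnew}), the inclusion $M_HK\subseteq \conv(K_y\cup K^{\dagger}_y)$ for every $y\in H^{\perp}$ is immediate, since $M_HK=\tfrac12 K_y+\tfrac12 K^{\dagger}_y$ (the translations by $\pm y$ cancel), and for any convex sets $A,B$ one has $\tfrac12 A+\tfrac12 B\subseteq\conv(A\cup B)$. For the reverse inclusion I would use support functions. For $u\in\R^n$, using $h_{K^{\dagger}_y}(u)=h_{K^{\dagger}}(u)-u\cdot y$ and $h_{K_y}(u)=h_K(u)+u\cdot y$, one obtains
\[
h_{\conv(K_y\cup K^{\dagger}_y)}(u)=\max\{h_K(u)+u\cdot y,\ h_{K^{\dagger}}(u)-u\cdot y\}.
\]
Set $a=h_K(u)$ and $b=h_{K^{\dagger}}(u)$. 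If $u\notin H$, then $u\cdot y$ ranges over all of $\R$ as $y$ varies in $H^{\perp}$, and the minimum of $\max\{a+t,b-t\}$ over $t\in\R$ equals $(a+b)/2=h_{M_HK}(u)$, attained at the explicit $y=\tfrac{b-a}{2\|u|H^{\perp}\|^{2}}\,(u|H^{\perp})$. If $u\in H$, reflection in $H$ fixes $u$, so $h_{K^{\dagger}}(u)=h_K(u)$ and hence $h_{\conv(K_y\cup K^{\dagger}_y)}(u)=h_K(u)=h_{M_HK}(u)$ for every $y\in H^{\perp}$.

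The reverse inclusion then follows by a standard separation argument: if $p\notin M_HK$, pick $u$ with $u\cdot p>h_{M_HK}(u)$; by the computation above there exists $y\in H^{\perp}$ with $h_{\conv(K_y\cup K^{\dagger}_y)}(u)<u\cdot p$, so $p\notin\conv(K_y\cup K^{\dagger}_y)$, hence $p$ is not in the intersection. The only nontrivial step is verifying that the infimum of the support functions is exactly $h_{M_HK}$, i.e., that the ``tilt'' parameter $y$ really can equalize the two terms — which is where the case split $u\in H$ versus $u\notin H$ enters and where the requirement $y\in H^{\perp}$ (rather than $y\in\R^n$) is used in an essential way.
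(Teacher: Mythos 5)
Your proof is correct. For \eqref{Minnew} it is essentially the paper's argument: the inclusion $M_HK\subset\conv(K_y\cup K_y^{\dagger})$ via $M_HK=\frac12 K_y+\frac12 K_y^{\dagger}$, and the reverse inclusion by computing $h_{\conv(K_y\cup K_y^{\dagger})}(u)=\max\{h_K(u)+u\cdot y,\,h_{K^{\dagger}}(u)-u\cdot y\}$ and choosing $y$ to equalize the two terms; your explicit case split $u\in H$ versus $u\notin H$ (using $h_{K^{\dagger}}(u)=h_K(u)$ on $H$) just makes precise a point the paper passes over silently, since for $v\in H$ the two expressions are already equal for every $y$. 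For \eqref{Steinnew} your forward computation (the substitution $y'=w-y=(z-y)/2$) is the same algebra as the paper's, but your reverse inclusion is genuinely different: you invert the same change of variables to exhibit each point of $K_{y}\cap K^{\dagger}_{y}$ directly in the form required by \eqref{Fiberdef}, whereas the paper instead notes that $K_y\cap K_y^{\dagger}$ is $H$-symmetric and invokes the monotonicity, invariance on $H$-symmetric sets, and invariance under translations orthogonal to $H$ of $F_H$ to get $K_y\cap K_y^{\dagger}=F_H(K_y\cap K_y^{\dagger})\subset F_HK_y=F_HK$. Your version has the advantage of being self-contained, relying only on the definition \eqref{Fiberdef} rather than on properties of fiber symmetrization stated earlier without detailed proof; the paper's version illustrates how those abstract properties alone force the containment, which is the mechanism reused later (e.g.\ in Corollary~\ref{IdemGab}) for general symmetrizations in place of $F_H$.
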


\begin{proof}
Let $z\in F_HK$.  Then, using (\ref{fhjk}) with $G=H$, we have $z=((x+a)/2)+(x-b)/2$, where $x\in H$, $a,b\in H^{\perp}$, and $x+a,x+b\in K$.  Let $y=-(a+b)/2$.  Then $z=(x+a)+y\in K+y$ and $z=(x-b)-y\in K^{\dagger}-y$.  Therefore
$z\in K_y\cap K^{\dagger}_y$, so $F_HK$ is contained in the right-hand side of (\ref{Steinnew}).  For the reverse inclusion, note first that $K_y\cap K^{\dagger}_y$ is $H$-symmetric.  From the invariance of $F_H$ on $H$-symmetric sets and the fact that $F_H$ is monotonic and invariant on translations orthogonal to $H$ of $H$-symmetric sets, we obtain
$$K_y\cap K^{\dagger}_y=F_H(K_y\cap K^{\dagger}_y)\subset F_HK_y=F_HK$$
for all $y\in H^{\perp}$.  This proves (\ref{Steinnew}).

To prove (\ref{Minnew}), let $y\in H^{\perp}$ and let $Q_y=\conv(K_y\cup K^{\dagger}_y)$.  Then since $K_y$ and $K^{\dagger}_y$ are contained in $Q_y$ and the latter is convex,
$$M_HK=M_HK_y=\frac12 K_y+\frac12 K^{\dagger}_y\subset Q_y,$$
so $M_HK\subset \cap_{y\in H^{\perp}}Q_y$. To prove the reverse containment in (\ref{Minnew}), observe that if $v\in S^{n-1}$, then by (\ref{ktd}) and \cite[(0.24), p.~17]{Gar06}, we obtain
\begin{eqnarray}\label{n7}
h_{\cap_{y\in H^{\perp}}\,Q_y}(v)&\le &\min_{y\in H^{\perp}}h_{Q_y}(v)=
\min_{y\in H^{\perp}}\max\{h_{K_y}(v),h_{K^{\dagger}_y}(v)\}\nonumber\\
&=&\min_{y\in H^{\perp}}\max\left\{h_K(v)+y\cdot v,h_{K^{\dagger}}(v)-y\cdot v\right\}\nonumber\\
&=&\frac12 h_{K}(v)+\frac12 h_{K^{\dagger}}(v)=h_{M_HK}(v),
\end{eqnarray}
as required, where the first equality in (\ref{n7}) results from observing that the minimum occurs when the two expressions are equal, i.e., when $y\cdot v=(h_{K^{\dagger}}(v)-h_K(v))/2$.
\end{proof}

\begin{cor}\label{Newcharcor}
If $H\in {\mathcal{G}}(n,i)$, $i\in\{0,\dots,n-1\}$, and $K\in {\mathcal{K}}^n$, then the fiber symmetral $F_HK$ (and therefore the Steiner symmetral $S_HK$, if $i=n-1$) is the union of all $H$-symmetric compact convex sets such that some translate orthogonal to $H$ is contained in $K$, and the Minkowski symmetral $M_HK$ is the intersection of all $H$-symmetric compact convex sets such that some translate orthogonal to $H$ contains $K$.
\end{cor}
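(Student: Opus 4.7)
The plan is to deduce the corollary directly from the two formulas \eqref{Steinnew} and \eqref{Minnew} in Theorem~\ref{Newchar}. Let $\mathcal{F}$ denote the collection of all $H$-symmetric sets $L\in\mathcal{K}^n$ such that $L+z\subset K$ for some $z\in H^\perp$, and let $\mathcal{G}$ denote the collection of all such $L$ with $L+z\supset K$ for some $z\in H^\perp$. I will show that $F_HK=\bigcup_{L\in\mathcal{F}}L$ and $M_HK=\bigcap_{L\in\mathcal{G}}L$. The argument is essentially bookkeeping with the reflection $\dagger$ in $H$ and the relations $K_y=K+y$ and $K_y^\dagger=K^\dagger-y$ from \eqref{ktd}; the only real point to watch is that, because $y\in H^\perp$, one has $y^\dagger=-y$, so reflecting the inclusion $L+z\subset K$ converts $z$ into $-z$ in the correct way.

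For the fiber part, I would first fix $y\in H^\perp$ and verify that $L_y:=K_y\cap K_y^\dagger$ lies in $\mathcal{F}$: it is compact and convex as an intersection of two such sets, it is $H$-symmetric since $(K_y)^\dagger=K_y^\dagger$, and $L_y-y\subset K_y-y=K$, so the translate by $-y\in H^\perp$ sits inside $K$. Hence $\bigcup_y L_y\subset\bigcup_{L\in\mathcal{F}}L$, which by \eqref{Steinnew} gives the inclusion $F_HK\subset\bigcup_{L\in\mathcal{F}}L$. Conversely, given $L\in\mathcal{F}$ with $L+z\subset K$, set $y=-z$. Then $L\subset K-z=K_y$, and reflecting in $H$ gives $L=L^\dagger\subset K^\dagger+z=K^\dagger-y=K_y^\dagger$. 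Thus $L\subset K_y\cap K_y^\dagger\subset F_HK$ by \eqref{Steinnew}, yielding the reverse inclusion.

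For the Minkowski part, fix $y\in H^\perp$ and set $Q_y=\conv(K_y\cup K_y^\dagger)$. Then $Q_y$ is a compact convex $H$-symmetric set (since it is the convex hull of a set invariant under $\dagger$), and $K\subset K_y-y\subset Q_y-y$, so the translate $Q_y+(-y)$ contains $K$; hence $Q_y\in\mathcal{G}$. Therefore $\bigcap_{L\in\mathcal{G}}L\subset\bigcap_y Q_y=M_HK$ by \eqref{Minnew}. Conversely, if $L\in\mathcal{G}$ with $K\subset L+z$, then $K_{-z}=K-z\subset L$, and reflecting in $H$ and using $L^\dagger=L$ gives $K_{-z}^\dagger=K^\dagger+z\subset L$. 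Since $L$ is convex, $Q_{-z}=\conv(K_{-z}\cup K_{-z}^\dagger)\subset L$, and then \eqref{Minnew} forces $M_HK\subset Q_{-z}\subset L$, completing the proof.

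I do not anticipate a significant obstacle; the entire content of the corollary is the observation that the family $\{K_y\cap K_y^\dagger\}_{y\in H^\perp}$ (respectively $\{Q_y\}_{y\in H^\perp}$) appearing in Theorem~\ref{Newchar} is cofinal in $\mathcal{F}$ (respectively coinitial in $\mathcal{G}$) with respect to inclusion, so the union (respectively intersection) over either family is the same.
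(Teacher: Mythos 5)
Your proposal is correct and follows essentially the same route as the paper: both deduce the corollary from \eqref{Steinnew} and \eqref{Minnew} by noting that the sets $K_y\cap K_y^{\dagger}$ and $\conv(K_y\cup K_y^{\dagger})$ are $H$-symmetric members of the respective families, and that any $L$ with $L+z\subset K$ (resp.\ $L+z\supset K$) satisfies $L\subset K_{-z}\cap K^{\dagger}_{-z}$ (resp.\ $L\supset\conv(K_{-z}\cup K^{\dagger}_{-z})$). Your write-up just spells out the reflection bookkeeping that the paper leaves implicit.
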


\begin{proof}
For each $y\in H^{\perp}$, the sets $K_y\cap K_y^{\dagger}$ and $\conv(K_y\cup K_y^{\dagger})$ defined via (\ref{ktd}) are $H$-symmetric. It remains to observe that if $L\in {\mathcal{K}}^n$ is $H$-symmetric and $L+z\subset K$ (or $L+z\supset K$) for some $z\in H^{\perp}$, then $L\subset K_{-z}\cap K^{\dagger}_{-z}$ (or $L\supset \conv(K_{-z}\cup K^{\dagger}_{-z})$, respectively).
\end{proof}

By Theorem~\ref{Gab3}, when $i=n-1$ the hypotheses of the following corollary hold if the assumption that $\di$ is monotonic and invariant under $H$-symmetric sets is replaced by the assumption that $\di$ is strictly monotonic, idempotent, and either invariant on $H$-symmetric spherical cylinders or projection invariant.

\begin{cor}\label{IdemGab}
Let $H\in {\mathcal{G}}(n,i)$, $i\in\{0,\dots,n-1\}$, and let ${\mathcal{B}}={\mathcal{K}}^n$ or ${\mathcal{B}}={\mathcal{K}}^n_n$.  Suppose that ${\di}:{\mathcal{B}}\rightarrow{\mathcal{B}}_H$ is monotonic, invariant on $H$-symmetric sets, and invariant under translations orthogonal to $H$ of $H$-symmetric sets.  Then
\begin{equation}\label{SCM2a}
F_HK\subset\di K\subset M_HK
\end{equation}
for all $K\in {\mathcal{B}}$.
\end{cor}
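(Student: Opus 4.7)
The plan is to use Corollary~\ref{Newcharcor}, which exhibits $F_H K$ as a union and $M_H K$ as an intersection of $H$-symmetric compact convex sets of a particular type. The three hypotheses on $\di$ (monotonicity, invariance on $H$-symmetric sets, and invariance under translations orthogonal to $H$ of $H$-symmetric sets) are precisely what is needed to push each set through these descriptions.

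For the first inclusion $F_H K \subset \di K$, I would proceed as follows. Let $L \in \mathcal{K}^n$ be $H$-symmetric and suppose that $L + y \subset K$ for some $y \in H^{\perp}$. Monotonicity gives $\di(L + y) \subset \di K$. Since $L$ is $H$-symmetric, invariance under translations orthogonal to $H$ of $H$-symmetric sets yields $\di(L + y) = \di L$, and invariance on $H$-symmetric sets yields $\di L = L$. Hence $L \subset \di K$. By Corollary~\ref{Newcharcor}, $F_H K$ is the union of all such $L$, so $F_H K \subset \di K$. (A small point to check: when $\mathcal{B} = \mathcal{K}^n_n$, one needs $L$ to have interior; this is fine since any $H$-symmetric $L$ with $L + y \subset K$ can be exhausted from inside by full-dimensional $H$-symmetric sets, or one can thicken $L$ slightly within $K$ and pass to a limit using monotonicity.)

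For the second inclusion $\di K \subset M_H K$, the argument is dual. Let $L \in \mathcal{K}^n$ be $H$-symmetric and suppose that $K \subset L + y$ for some $y \in H^{\perp}$. Then monotonicity gives $\di K \subset \di(L + y)$, and the same two invariance properties give $\di(L + y) = \di L = L$. Hence $\di K \subset L$. Taking the intersection over all such $L$, Corollary~\ref{Newcharcor} gives $\di K \subset M_H K$.

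I do not anticipate any real obstacle: both inclusions reduce, via Corollary~\ref{Newcharcor}, to the same short two-line computation using the three hypothesized properties in succession. The only minor nuisance is the domain issue on $\mathcal{K}^n_n$ mentioned above, which is handled by an approximation argument and does not affect the structure of the proof.
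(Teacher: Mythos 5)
Your argument is correct and essentially identical to the paper's: the paper applies the same three hypotheses (monotonicity, invariance on $H$-symmetric sets, invariance under translations orthogonal to $H$) to the $H$-symmetric sets $K_y\cap K_y^{\dagger}$ and $\conv(K_y\cup K_y^{\dagger})$ and then invokes the representations (\ref{Steinnew}) and (\ref{Minnew}) of Theorem~\ref{Newchar}, of which Corollary~\ref{Newcharcor} is just a restatement, so your route through that corollary is the same computation. The dimensionality caveat you raise for ${\mathcal{B}}={\mathcal{K}}^n_n$ is equally present (and left implicit) in the paper's proof, so it does not distinguish the two approaches.
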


\begin{proof}
Let $K\in {\mathcal{B}}$ and let $y\in H^{\perp}$. The sets $K_y\cap K_y^{\dagger}$ and $\conv(K_y\cup K_y^{\dagger})$ defined via (\ref{ktd}) are $H$-symmetric and $K\subset \conv(K_y\cup K_y^{\dagger})-y$.  Hence, using the monotonicity and invariance property of $\di$, we obtain
\begin{equation}\label{GG1}
K_y\cap K_y^{\dagger}=\di(K_y\cap K_y^{\dagger})\subset \di K_y=\di K
\end{equation}
and
\begin{equation}\label{GG2}
\di K\subset \di\left(\conv(K_y\cup K_y^{\dagger})-y\right)=\di\conv(K_y\cup K_y^{\dagger})=\conv(K_y\cup K_y^{\dagger}).
\end{equation}
Then (\ref{SCM2a}) follows immediately from (\ref{Steinnew}), (\ref{Minnew}), (\ref{GG1}), and (\ref{GG2}).
\end{proof}

The containment $S_HK\subset M_HK$ when $i=n-1$ is both well known and rather obvious geometrically, but nevertheless has been found useful in proving results on the convergence of successive Steiner symmetrals; see, for example, \cite{CouD14} and the references given there.

The restriction to $i=n-1$ cannot be dropped in obtaining the inclusion $S_HK\subset\di K$, since it is not true that for $i\in \{1,\dots,n-2\}$ and $H\in {\mathcal{G}}(n,i)$, we have $S_HK\subset M_HK$, where $S_HK$ is the Schwarz symmetral of $K$.  Indeed, if $H=\lin\{e_1,\dots,e_i\}$, $a>0$, and $K=[-1,1]^{n-1}\times[-a,a]$, then $M_HK=K$ and $S_HK$ is a spherical cylinder with radius $r$ satisfying $\kappa_{n-i}r^{n-i}=2^{n-i}a$.  Thus if $a<(2^{n-i}/\kappa_{n-i})^{1/(n-i-1)}$, then $r>a$ and $S_HK\not\subset M_HK$.

Examples~\ref{Vexlast}, \ref{Vexlas2}, and \ref{Vex2} with $j=1$ show that none of the other assumptions can be omitted in obtaining the inclusion $F_HK\subset\di K$. Examples~\ref{ex1nww}, \ref{ex2}, and \ref{ex1Gab} show that none of the assumptions can be omitted in obtaining the inclusion $\di K\subset M_HK$.

The fiber symmetrizations $F_{H,G}$ satisfy the hypotheses of Corollary~\ref{IdemGab}.   Another such family of symmetrizations is given by $\di_{t,J} K=((1-t)\circ S_HK)\nplus_J (t\circ M_HK)$ for $K\in {{\mathcal{K}}^n}$ and $0\le t\le 1$, where $\dim J\in \{0,\dots,n\}$ and $J\subset H$ or $J\supset H$.  Further examples can be obtained by concatenating these symmetrizations for different $t$ and $J$.

\begin{thm}\label{SchwarzF}
Let $H\in {\mathcal{G}}(n,n-1)$, let ${\mathcal{B}}={\mathcal{K}}^n$ or ${\mathcal{B}}={\mathcal{K}}^n_n$, and let $F:{\mathcal{B}}\to [0,\infty)$ be a strictly increasing set function invariant under translations orthogonal to $H$ of $H$-symmetric sets.  If ${\di}:{\mathcal{B}}\rightarrow{\mathcal{B}}_H$ is monotonic, $F$-preserving, and either invariant on $H$-symmetric spherical cylinders or projection invariant, then $\di$ is invariant under translations orthogonal to $H$ of $H$-symmetric sets and
\begin{equation}\label{SCM}
S_HK\subset \di K\subset M_HK
\end{equation}
for all $K\in {\mathcal{B}}$.
\end{thm}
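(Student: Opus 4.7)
The plan is to apply Theorem~\ref{SchwarzFpre} to secure the three equivalent invariance properties, then upgrade these to invariance under orthogonal translations of $H$-symmetric sets, and finally appeal to Corollary~\ref{IdemGab} for the inclusions $S_HK\subset\di K\subset M_HK$ (recalling that $F_HK=S_HK$ when $i=n-1$).

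First I would note that Theorem~\ref{SchwarzFpre} applied to our hypotheses forces $\di$ to be invariant on $H$-symmetric sets, on $H$-symmetric spherical cylinders, and projection invariant. For the translation invariance, fix $K\in\mathcal{B}$ that is $H$-symmetric and $y\in H^{\perp}$. I would first record the sandwich
\[
(K+y)\cap(K-y)\ \subset\ \di(K+y)\ \subset\ K+[-y,y],
\]
which comes from monotonicity together with the fact that both $\conv((K+y)\cup(K-y))=K+[-y,y]$ and $(K+y)\cap(K-y)$ are $H$-symmetric and hence fixed by $\di$. Combining this with the equality $F(\di(K+y))=F(K+y)=F(K)$ (from $F$-preservation and the assumed invariance of $F$ on orthogonal translates of $H$-symmetric sets), Lemma~\ref{SchwarzFGab} reduces the problem to showing $\di(K+y)\supset K$. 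I would handle this by reducing to the case of an $H$-symmetric spherical cylinder $C\subset K$: assuming $\di(C+y)\supset C$ for every such $C$, monotonicity gives $C\subset\di(C+y)\subset\di(K+y)$, and taking the closure of the union over all $H$-symmetric spherical cylinders contained in $K$ yields $K\subset\di(K+y)$ when $\dim K=n$; the case $\dim K<n$ follows by applying the result to $K+\varepsilon B^n$ and sending $\varepsilon\to 0$ via monotonicity.

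The hard part will be the cylinder case $\di(C+y)\supset C$ for $C=D_r(x_0)+s(B^n\cap H^{\perp})$. The sandwich restricts $\di(C+y)$ to have projection $D_r(x_0)$ onto $H$ and fiber half-lengths lying in $[\max(0,s-|y|),\,s+|y|]$, and strict monotonicity of $F$ forces the only $H$-symmetric spherical cylinder with base $D_r(x_0)$ and $F$-value $F(C)$ to equal $C$ itself. The remaining task is to rule out non-cylindrical $H$-symmetric compact convex sets within the sandwich that share the $F$-value of $C$; I expect this to require a further application of Lemma~\ref{SchwarzFGab}, combined with an iteration on sub-cylinders or a perturbation argument exploiting the fact that $C+y$ is a translate of a cylinder with the same base. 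Once invariance of $\di$ under orthogonal translations of $H$-symmetric sets is established, the hypotheses of Corollary~\ref{IdemGab} are met (monotonicity, invariance on $H$-symmetric sets, and the newly established translation invariance), and that corollary immediately delivers the chain $S_HK\subset\di K\subset M_HK$ for every $K\in\mathcal{B}$.
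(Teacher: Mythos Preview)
Your high-level plan is right and matches the paper: invoke Theorem~\ref{SchwarzFpre} to get invariance on $H$-symmetric sets (and projection invariance), then prove invariance under orthogonal translations of $H$-symmetric sets, then apply Corollary~\ref{IdemGab}. The difficulty is exactly where you locate it, and your proposal leaves a genuine gap there.

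The problem is your reduction to the cylinder case. You propose to show $\di(C+y)\supset C$ for every $H$-symmetric spherical cylinder $C$, but then acknowledge that you do not know how to rule out non-cylindrical $H$-symmetric sets in the sandwich with the same $F$-value. For an \emph{arbitrary} strictly increasing $F$ there is no reason such competitors should not exist, so the hoped-for ``perturbation argument exploiting that $C+y$ is a translate of a cylinder with the same base'' has no obvious leverage: nothing in the hypotheses ties $F$ to the cylindrical structure of $C$. Your ``iteration on sub-cylinders'' hint is closer to the mark, but as stated it is circular, since showing $\di(C'+y)\supset C'$ for a sub-cylinder $C'$ is the same problem you are trying to solve.

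The paper sidesteps this entirely. Rather than reducing to cylinders, it argues by contradiction using projection invariance as the key tool. If $K$ is $H$-symmetric with $\dim K=n$ and $\di(K+y)\neq K$, then (since $(\di(K+y))|H=K|H$) Lemma~\ref{SchwarzFGab} produces a point $x\in\relint(K|H)$ at which the fiber of $\di(K+y)$ is \emph{properly contained} in the fiber of $K$. Choosing a small ball $D_r(x)\subset H$ and setting $C_r(x)=D_r(x)\times H^{\perp}$, projection invariance and monotonicity give
\[
\di\bigl((K\cap C_r(x))+y\bigr)=\di\bigl((K+y)\cap C_r(x)\bigr)\subset(\di(K+y))\cap C_r(x)\subsetneq K\cap C_r(x),
\]
which contradicts Lemma~\ref{SchwarzFGab} applied to the $H$-symmetric set $K\cap C_r(x)$. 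The case $\dim K<n$ then follows exactly as you suggest, via $K+\varepsilon B^n$. The point is that the localization is used to \emph{derive a contradiction}, not to reduce to a special class of $K$; this is what lets Lemma~\ref{SchwarzFGab} close the argument without any structural information about $F$ beyond strict monotonicity.
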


\begin{proof}
By Theorem~\ref{SchwarzFpre}, $\di$ is invariant on $H$-symmetric sets.  Then (\ref{SCM}) will follow from Corollary~\ref{IdemGab} if we can show that $\di$ is invariant under translations orthogonal to $H$ of $H$-symmetric sets.  To this end, let $K\in {\mathcal{B}}$ be $H$-symmetric and let $y\in H^{\perp}$.  The desired conclusion that $\di (K+y)=\di K$ will follow if we show that $\di(K+y)=K$.

Suppose that $\di(K+y)\neq K$.  By Theorem~\ref{Invariance}, $\di$ is projection invariant.  Assuming $\dim K=n$, it follows from Lemma~\ref{SchwarzFGab} that there is an $x\in \relint K|H=\relint(\di K)|H$ such that $(\di(K+y))\cap (H^{\perp}+x)$ is properly contained in $K\cap (H^{\perp}+x)$.  We may then choose $r>0$ so that $(\di(K+y))\cap C_r(x)$ is properly contained in $K\cap C_r(x)$, where $C_r(x)=D_r(x)\times H^{\perp}$ and $D_r(x)\subset H$ is the $(n-1)$-dimensional ball with center $x$ and radius $r>0$.  By the monotonicity and projection invariance of $\di$, we have
$$\di((K\cap C_r(x))+y)=\di((K+y)\cap C_r(x))\subset (\di(K+y))\cap C_r(x),$$
and hence $\di((K\cap C_r(x))+y)$ is properly contained in $K\cap C_r(x)$.  This contradicts Lemma~\ref{SchwarzFGab}, applied to the set $K\cap C_r(x)$.

Now suppose that $\dim K<n$.  By the monotonicity of $\di$ and the above, for each $\ee>0$ we have
$$\di(K+y)\subset \di(K+\ee B^n+y)=\di(K+\ee B^n)=K+\ee B^n,$$
which implies that $\di(K+y)\subset K$ and hence, by Lemma~\ref{SchwarzFGab} again, that $\di(K+y)=\di K$.
\end{proof}

The restriction to $i=n-1$ cannot be dropped in obtaining either inclusion in (\ref{SCM}). Indeed, we may take $\di=S_H$, Schwarz symmetrization, and $F=V_n$, but we showed after Corollary~\ref{IdemGab} that for $i\in \{1,\dots,n-2\}$ and $H\in {\mathcal{G}}(n,i)$, it is not generally true that $S_HK\subset M_HK$.  A similar conclusion is reached by taking $\di=\overline{M}_H$, Minkowski-Blaschke symmetrization, and $F=V_1$.

Examples~\ref{Vex2}, \ref{ex2nww}(i) and (ii), and \ref{ex3nww} with $j=1$ or $j=n$ show that none of the other assumptions can be omitted in obtaining the inclusions in (\ref{SCM}).

\begin{thm}\label{IdemGab2}
Let $H\in {\mathcal{G}}(n,i)$, $i\in\{1,\dots,n-1\}$, let ${\mathcal{B}}={\mathcal{K}}^n$ or ${\mathcal{B}}={\mathcal{K}}^n_n$, and let
${\di}:{\mathcal{B}}\rightarrow{\mathcal{B}}_H$ be invariant on $H$-symmetric spherical cylinders and invariant under translations orthogonal to $H$ of $H$-symmetric sets.  Consider the expression
\begin{equation}\label{SCM2c}
I_HK\subset \di K\subset O_HK.
\end{equation}
The left-hand inclusion holds for all $K\in {\mathcal{B}}$ if, in addition to the assumptions stated before \eqref{SCM2c}, ${\mathcal{B}}={\mathcal{K}}^n_n$ and $\di$ is monotonic.  The right-hand inclusion holds for all $K\in {\mathcal{B}}$ if, in addition to the assumptions stated before \eqref{SCM2c}, $\di$ is strictly monotonic and idempotent.
\end{thm}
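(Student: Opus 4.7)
I would prove each inclusion by working with the geometric characterizations of $I_H K$ and $O_H K$ in terms of $H$-symmetric spherical cylinders, exploiting the only tools available: invariance on cylinders, invariance under translations orthogonal to $H$ of $H$-symmetric sets, and (for the right inclusion) Lemma~\ref{Gab1}.

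For the left-hand inclusion I would use the description of $I_H K$, valid when $K\in\mathcal{K}^n_n$ and recorded just after the definition of $I_H$, as the closure of the union of all $H$-symmetric spherical cylinders $C$ admitting a translate $C+z\subset K$ with $z\in H^\perp$.  For any such $C$, monotonicity of $\di$ gives $\di(C+z)\subset\di K$; invariance under translations orthogonal to $H$ of $H$-symmetric sets gives $\di(C+z)=\di C$; and invariance on $H$-symmetric spherical cylinders gives $\di C=C$.  Hence $C\subset\di K$, and since $\di K$ is closed the closure of the union also lies in $\di K$, yielding $I_H K\subset\di K$.  Note that this step uses neither strict monotonicity nor idempotence, consistent with the weaker hypotheses stated for this inclusion.

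For the right-hand inclusion, recall that $O_H K$ is the intersection of all rotationally symmetric (with respect to $H$) convex bodies $L$ some translate of which orthogonal to $H$ contains $K$.  Any such $L$ is $H$-symmetric, so it suffices to show $\di L=L$ for each $L$: then for $K\subset L+z$ with $z\in H^\perp$, monotonicity and translation invariance give $\di K\subset\di(L+z)=\di L=L$, and intersecting over $L$ yields $\di K\subset O_H K$.  To prove $\di L=L$, I would express $L$ as the closure of the union of the $H$-symmetric spherical cylinders it contains; each such cylinder $C$ satisfies $C=\di C\subset \di L$ by invariance on cylinders and monotonicity, so $L\subset\di L$, and Lemma~\ref{Gab1}, applied with $K$ replaced by $L$ and using strict monotonicity together with idempotence, concludes $\di L=L$.

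The main technical point, and where the proof requires care rather than a direct chaining of hypotheses, is the representation of a rotationally symmetric convex body $L$ as the closure of the union of $H$-symmetric spherical cylinders it contains.  Writing $L\cap(H^\perp+x)=r_x(B^n\cap H^\perp)+x$ for $x\in L|H$, the function $x\mapsto r_x$ is nonnegative, concave on $L|H$, and continuous on its interior.  For any interior point $z=x+y$ of $L$ one has $|y|<r_x$, and concavity plus continuity furnish $r,s>0$ with $|y|<s<r_{x'}$ for every $x'\in D_r(x)\subset\relint(L|H)$, so that the $H$-symmetric spherical cylinder $D_r(x)+s(B^n\cap H^\perp)$ lies inside $L$ and contains $z$.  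Since $L=\cl(\inte L)$ for any convex body, the representation follows, and the remainder of the argument is a direct assembly of the assumed properties of $\di$.
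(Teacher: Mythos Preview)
Your proof is correct and follows essentially the same route as the paper's: for the left inclusion you use the description of $I_HK$ as the closure of the union of $H$-symmetric spherical cylinders with a translate in $K$, and for the right inclusion you show that each rotationally symmetric convex body $L$ satisfies $L\subset\di L$ via the cylinders it contains and then apply Lemma~\ref{Gab1}. The paper's argument is identical in structure; your final paragraph supplying the justification that a rotationally symmetric convex body is the closure of the union of the $H$-symmetric spherical cylinders it contains is a detail the paper simply asserts.
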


\begin{proof}
Suppose that $\di$ is invariant on $H$-symmetric spherical cylinders and invariant under translations orthogonal to $H$ of $H$-symmetric sets.  Let $K\in {\mathcal{K}}^n_n$. Assume $\di$ is also monotonic. Let $L$ be an $H$-symmetric spherical cylinder such that $L+y\subset K$ for some $y\in H^{\perp}$.  Then
$$L=\di L=\di(L+y)\subset \di K.$$
Since $I_HK$ is the closure of the union of such sets $L$, the left-hand inclusion in (\ref{SCM2c}) follows.

Now assume instead that $\di$ is also strictly monotonic and idempotent. Let $K\in {\mathcal{K}}^n$ and let $C$ be a rotationally invariant convex body such that $K\subset C+y$ for some $y\in H^{\perp}$.  If $L$ is an $H$-symmetric spherical cylinder with $L\subset C$, then
$L=\di L\subset \di C$.  Since $C$ is the closure of the union of all such sets $L$, we have $C\subset \di C$ and hence $\di C=C$ by Lemma~\ref{Gab1}.  Therefore
$$\di K\subset \di(C+y)=\di C=C.$$
The right-hand inclusion in (\ref{SCM2c}) now follows from the definition of $O_HK$.
\end{proof}

The assumption ${\mathcal{B}}={\mathcal{K}}^n_n$ cannot be omitted in obtaining the inclusion $I_HK\subset\di K$, as can be seen by defining ${\di}:{\mathcal{K}}^n\rightarrow{\mathcal{K}}^n_H$ by $\di K=I_HK$ if $\dim K=n$ and $\di K=K|H$ otherwise.  Examples~\ref{exSHcont}, \ref{Vexlast} with ${\mathcal{B}}={\mathcal{K}}^n_n$, and \ref{Vex2} with ${\mathcal{B}}={\mathcal{K}}^n_n$ and $j=1$ show that none of the other assumptions can be omitted either.  Examples~\ref{ex1nwwmar}, \ref{lastex}, \ref{Vex2} with $j=1$, and \ref{ex1Gab} serve the same purpose for the inclusion $\di K\subset O_HK$.

Note that we may take $\di=S_H$, Schwarz symmetrization, in the previous theorem. Other symmetrizations satisfying the hypotheses of Theorem~\ref{IdemGab2} are the fiber symmetrizations $F_{H,G}$ or those given by $\di_{t,J}K=((1-t)\circ I_HK)\nplus_J (t\circ O_HK)$ for $K\in {{\mathcal{K}}^n}$ and $0\le t\le 1$, where $\dim J\in \{0,\dots,n\}$ and $J\subset H$ or $J\supset H$, and again, further examples can be obtained by concatenating these symmetrizations for different $t$ and $J$.

In the context of Corollary~\ref{IdemGab} and Theorem~\ref{IdemGab2} it is natural also to consider defining $\di_t K=(1-t)S_HK+_ptM_HK$ or $\di_t K=(1-t)I_HK+_ptO_HK$, where $1\le p\le \infty$ and the $L_p$ sum is taken with respect to the centroid of $S_HK$ or $I_HK$, as appropriate.  However, $\di_t$ is not monotonic when $p>1$.

\section{Convergence of successive symmetrals}\label{Convergence}

Elsewhere in this paper we always consider a symmetrization ${\di}=\di_H:{\mathcal{B}}\rightarrow{\mathcal{B}}_H$ with respect to a fixed subspace $H$.  In this section, we fix $i\in \{1,\dots,n-1\}$ but regard $\di$ as the entire collection of such maps, for all $H\in{\mathcal{G}}(n,i)$, and consider the convergence of successive applications of $\di$ through a sequence of $i$-dimensional subspaces. (We do not attempt to obtain optimal results; the topic will be thoroughly investigated in a future paper.)  To keep the terminology clear, we refer to the collection $\di$ of maps $\di_H$ as a {\em symmetrization process}.  We shall use and extend ideas of Coupier and Davydov \cite{CouD14}, who consider only the case $i=n-1$, and adopt some of their notation in modified form.

Let $i\in \{1,\dots,n-1\}$ and suppose that $\di$ is a symmetrization process such that for each $H\in{\mathcal{G}}(n,i)$,  ${\di_H}:{\mathcal{B}}\rightarrow{\mathcal{B}}_H$ is an $i$-symmetrization.  Let $(H_m)$ be a sequence in ${\mathcal{G}}(n,i)$ and for convenience write $\di_m=\di_{H_m}$ for $m\in\N$. If $1\le j\le m$, let
$$\di_{j,m}K=\di_{m}(\di_{{m-1}}(\cdots(\di_jK)\cdots))$$
for each $K\in {\mathcal{B}}$, so that $\di_{j,m}K$ results from $m-j+1$ successive $\di$-symmetrizations applied to $K$ with respect to $H_j,H_{j+1},\dots,H_m$.

Let ${\mathcal{B}}={\mathcal{K}}_n^n$. A sequence $(H_m)$ in ${\mathcal{G}}(n,i)$ is called {\em weakly $\di$-universal} if for all $K\in {\mathcal{K}}_n^n$ and $j\in \N$, there exists $r(j,K)>0$ such that $\di_{j,m}K\to r(j,K)B^n$ as $m\to \infty$.  Note that this implies in particular that the successive symmetrals $\di_{1,m}K$ converge to a ball as $m\to \infty$.  If the constant $r(j,K)$ is independent of $j$, we say that $(H_m)$ is {\em $\di$-universal}.  Example~\ref{weakdi} below exhibits a symmetrization $\di$ and a sequence $(H_m)$ that is weakly $\di$-universal but not $\di$-universal.

We shall use the terms {\em (weakly) $S$-universal} or {\em (weakly) $M$-universal} when $\di$ is Steiner or Minkowski symmetrization, respectively, and when $i=n-1$, also use the terms for the sequences $(u_m)$ of directions in $S^{n-1}$ such that $H_m=u_m^{\perp}$ for each $m$.  In fact, by \cite[Theorem~3.1]{CouD14}, a sequence $(u_m)$ in $S^{n-1}$ is $S$-universal if and only if it is $M$-universal.   Since Steiner and Minkowski symmetrization preserve volume and mean width, respectively, it is easy to see that $(u_m)$ is weakly $S$-universal (or weakly $M$-universal) if and only if it is $S$-universal (or $M$-universal, respectively).  Much is known about such sequences; in particular, \cite[Proposition~3.3]{CouD14} implies that any countable dense subset in $S^{n-1}$ contains a sequence that has each of these four equivalent properties.

The proof of the following result essentially follows that of \cite[Theorem~3.1]{CouD14} and we include it for the convenience of the reader.

\begin{thm}\label{Successive}
Let $i\in \{1,\dots,n-1\}$ and let $\di$ be a symmetrization process on ${\mathcal{K}}^n_n$.  Suppose that
\begin{equation}\label{SdiM}
I_HK\subset\di_HK\subset M_HK
\end{equation}
for all $H\in {\mathcal{G}}(n,i)$ and all $K\in {\mathcal{K}}^n_n$.  If $(H_m)$ is an $M$-universal sequence in ${\mathcal{G}}(n,i)$, then $(H_m)$ is weakly $\di$-universal.
\end{thm}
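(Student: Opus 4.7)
The plan is to iterate the sandwich $\di_H K\subset M_H K$ through successive symmetrizations, exploit the mean-width monotonicity this gives when combined with the Minkowski-invariance of $V_1$, and then invoke the $M$-universality of $(H_m)$ to drive $\di_{1,m}K$ to a ball.

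First, I would prove by induction on $p$, using the hypothesis \eqref{SdiM} together with the monotonicity of Minkowski symmetrization, that
\[
\di_{1,m+p}K\subset M_{m+1,m+p}(\di_{1,m}K)
\]
for all $m\ge 0$ and $p\ge 1$ (the base $p=1$ is immediate from \eqref{SdiM}, and the step uses $\di_{H_{m+p}}\subset M_{H_{m+p}}$ together with the monotonicity of $M_{H_{m+p}}$). Since $(H_m)$ is $M$-universal, $M_{1,m}K\to r(K)B^n$ in the Hausdorff metric, so the bodies $\di_{1,m}K\subset M_{1,m}K$ all lie in some common ball $RB^n$ for all large $m$.

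Second, using that $V_1$ is monotonic and satisfies $V_1(M_HX)=V_1(X)$, I would deduce
\[
V_1(\di_{1,m}K)\le V_1(M_{H_m}(\di_{1,m-1}K))=V_1(\di_{1,m-1}K),
\]
so $V_1(\di_{1,m}K)\searrow v_\infty$ for some $v_\infty\in[0,V_1(K)]$. The telescoping differences then sum to at most $V_1(K)$, whence their tails $V_1(\di_{1,m}K)-V_1(\di_{1,m+p}K)$ tend to $0$ uniformly in $p$ as $m\to\infty$. Combined with the iterated containment above and the classical stability estimate bounding the Hausdorff distance between two nested convex bodies in $RB^n$ by a power of the $L^1$ norm of the non-negative difference of their support functions (itself controlled by the mean-width gap, via the uniform Lipschitz regularity of these support functions), this gives that the Hausdorff distance between $\di_{1,m+p}K$ and $M_{m+1,m+p}(\di_{1,m}K)$ tends to $0$ uniformly in $p$ as $m\to\infty$.

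Third, $M$-universality applied to the body $\di_{1,m}K$ (with starting index $j=m+1$) says $M_{m+1,m+p}(\di_{1,m}K)\to r(\di_{1,m}K)B^n$ as $p\to\infty$, where $r(\di_{1,m}K)$ is determined by $V_1(\di_{1,m}K)$; as $m\to\infty$ this radius converges to some $r_\infty$ with $V_1(r_\infty B^n)=v_\infty$. Combining with the uniform-in-$p$ Hausdorff estimate from the previous paragraph, a diagonal choice of $m$ and $p$ produces arbitrarily large $N$ with $\di_{1,N}K$ Hausdorff-close to $r_\infty B^n$. Once this occurs at some $N$, the $1$-Lipschitz property of each $M_H$ on convex bodies in the Hausdorff metric, together with the fact that $r_\infty B^n$ is fixed by every $M_H$, propagates the bound forward: $\di_{1,m}K\subset M_{N+1,m}(\di_{1,N}K)$ stays inside a set Hausdorff-close to $r_\infty B^n$, while the mean-width floor $V_1(\di_{1,m}K)\ge v_\infty=V_1(r_\infty B^n)$ combined with the Lipschitz regularity of support functions rules out large inward deviations. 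Hence $\di_{1,m}K\to r_\infty B^n=:r(1,K)B^n$. The extension to general $j\ge 1$ follows by applying the same argument to the shifted (and still $M$-universal) sequence $(H_j,H_{j+1},\ldots)$ starting from $K$, giving $\di_{j,m}K\to r(j,K)B^n$.

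The chief obstacle is the quantitative stability estimate controlling Hausdorff distance by mean-width difference, applied uniformly in $p$; this is where the summability of the mean-width increments, itself a consequence of the Minkowski invariance of $V_1$ and the sandwich $\di_H\subset M_H$, is indispensable.
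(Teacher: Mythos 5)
Your convergence mechanism is sound and genuinely different from the paper's: where the paper proves that the mean widths $V_1(\di_{1,m}K)$ decrease, extracts a convergent subsequence by Blaschke's selection theorem, and identifies every subsequential limit with a fixed origin-centered ball via the containment $\di_{1,m_s}K\subset M_{m_p+1,m_s}(\di_{1,m_p}K)$ and the strict monotonicity of $V_1$, you replace the compactness step by a quantitative stability estimate (Hausdorff distance between nested convex bodies in a fixed ball controlled by a power of their mean-width gap, via the Lipschitz bound on support functions and the equivalence of containment with the pointwise inequality of support functions), followed by a diagonal choice of indices and a forward-propagation step using $\di_{1,m}K\subset M_{N+1,m}(\di_{1,N}K)$ and the fact that origin-centered balls are fixed by every $M_H$. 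These steps can all be made rigorous, and the approach buys a more effective, compactness-free argument at the cost of invoking the stability lemma.

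There is, however, a genuine gap: you never show that the limit radius is positive, and, tellingly, you never use the left-hand inclusion $I_HK\subset\di_HK$ of hypothesis \eqref{SdiM}. The definition of weak $\di$-universality demands $\di_{j,m}K\to r(j,K)B^n$ with $r(j,K)>0$; your argument only gives $v_\infty\ge 0$, and if $v_\infty=0$ the iterates collapse to the origin, which does not meet the definition. This is exactly what the unused hypothesis is for: since $K\in\mathcal{K}^n_n$ contains a ball of radius $b>0$, and the inner rotational symmetral of a body containing a ball of radius $b$ again contains a ball of radius $b$ (centered at the projection of the original center onto $H$), induction using $I_{H_m}L\subset\di_{H_m}L$ shows that every $\di_{1,m}K$ contains a ball of radius $b$, so $V_1(\di_{1,m}K)$ is bounded below by a positive constant and hence $v_\infty>0$, $r_\infty>0$. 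Adding this observation (and its obvious analogue for the shifted sequences starting at $j$) repairs the proof; without it the theorem as stated is not established.
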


\begin{proof}  Let $(H_m)$ be $M$-universal and let $K\in {\mathcal{K}}^n_n$. It is easy to see that any ball with center at the origin that contains $K$ will also contain all the successive $\di$-symmetrals $\di_{1,m}K$. If $m\in \N$ and $L\in {\mathcal{K}}^n_n$, then by (\ref{SdiM}), we have $\di_mL\subset M_mL$ and hence $V_1(\di_mL)\le V_1(M_mL)=V_1(L)$.  Taking $L=\di_{1,m-1}K$, we obtain $V_1(\di_{1,m}K)\le V_1(\di_{1,m-1}K)$ for all $m=2,3,\dots$.  Therefore $V_1(\di_{1,m}K)\to a$, say, as $m\to \infty$.

Since $K\in {\mathcal{K}}^n_n$, there is an $n$-dimensional ball contained in $K$ of radius $b>0$. For any $L\in {\mathcal{K}}^n_n$ and $m\in \N$, let $I_mL=I_{H_m}L$.  From the definition of the inner rotational symmetral, it is clear that if $L$ contains an $n$-dimensional ball of radius $b$, then $I_mL$ does as well.  Then (\ref{SdiM}) implies that $\di_{1,m}K$, $m\in N$, also contains an $n$-dimensional ball of radius $b$.  It follows that $a>0$. By Blaschke's selection theorem, there is a subsequence $(H_{m_p})$ of $(H_m)$ such that $\di_{1,m_p}K\to J\in{\mathcal{K}}^n_n$ as $p\to\infty$, where $V_1(J)=a>0$.

Now if $1\le p < s$, then by (\ref{SdiM}),
\begin{equation}\label{Jin}
\di_{1,m_s}K=\di_{m_p+1,m_s}(\di_{1,m_p}K)\subset M_{m_p+1,m_s}(\di_{1,m_p}K).
\end{equation}
As $s\to\infty$, the body on the left converges to $J$, while because $(H_m)$ is $M$-universal, the body on the right converges to the ball $B_{p,K}$ with center at the origin such that $V_1(B_{p,K})=V_1(\di_{1,m_p}K)$.  However, the latter equation implies that $V_1(B_{p,K})\to a$ as $p\to\infty$.  Since $V_1$ is strictly monotonic, $J\subset B_{p,K}$ by (\ref{Jin}), and $V_1(J)=a$, this forces $J$ to be the ball $B_1$ centered at the origin with $V_1(B_1)=a$.  Consequently, any convergent subsequence of $(\di_{1,m}K)$ converges to $B_1$ and hence $\di_{1,m}K\to B_1$ as $m\to \infty$.

Finally, if $j\in \N$, $j\ge 2$, we can apply the above argument to the $M$-universal sequence $(H_{m+j-1})$, $m\in \N$, to conclude that $\di_{j,m}K$ converges to a ball $B_j$ as $m\to \infty$. This proves that $(H_m)$ is weakly $\di$-universal.
\end{proof}

Note that if $i=n-1$, then $I_H=S_H$, the Steiner symmetral.  As noted above, $M$-universal and $S$-universal sequences coincide.  By Theorem~\ref{Gab3}, when $i=n-1$ the hypotheses of the following corollary hold if the assumption that $\di$ is monotonic and invariant under $H$-symmetric sets is replaced by the assumption that $\di$ is strictly monotonic, idempotent, and either invariant on $H$-symmetric spherical cylinders or projection invariant.

\begin{cor}\label{Successivecor}
Let $i\in \{1,\dots,n-1\}$ and let $\di$ be a symmetrization process on ${\mathcal{K}}^n_n$.  Suppose that for each $H\in {\mathcal{G}}(n,i)$,
${\di}:{\mathcal{K}}^n_n\rightarrow ({\mathcal{K}}^n_n)_H$ is monotonic, invariant on $H$-symmetric sets, and invariant under translations orthogonal to $H$ of $H$-symmetric sets.  If $(H_m)$ is an $M$-universal sequence, then $(H_m)$ is weakly $\di$-universal.
\end{cor}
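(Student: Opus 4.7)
The plan is to reduce Corollary~\ref{Successivecor} to Theorem~\ref{Successive} by verifying the latter's hypothesis, namely the sandwich inclusion
\begin{equation*}
I_H K \subset \di_H K \subset M_H K
\end{equation*}
for every $K\in{\mathcal{K}}^n_n$ and every $H\in {\mathcal{G}}(n,i)$. The assumptions placed on $\di_H$ in Corollary~\ref{Successivecor} (monotonicity, invariance on $H$-symmetric sets, and invariance under translations orthogonal to $H$ of $H$-symmetric sets) are precisely the hypotheses of Corollary~\ref{IdemGab}, so I would apply that corollary directly to obtain
\begin{equation*}
F_H K \subset \di_H K \subset M_H K
\end{equation*}
for all $K\in{\mathcal{K}}^n_n$ and $H\in {\mathcal{G}}(n,i)$.

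It then remains to observe that $I_H K \subset F_H K$. For this I would invoke Corollary~\ref{Newcharcor}, according to which $F_H K$ is the union of all $H$-symmetric compact convex sets some translate orthogonal to $H$ of which is contained in $K$. Since every $H$-symmetric spherical cylinder is such a set, each cylinder $C$ having an $H^{\perp}$-translate inside $K$ satisfies $C\subset F_H K$; and because $F_H K$ is closed, it also contains the closure of the union of all such cylinders. But that closure is exactly $I_H K$ by definition (as noted in Section~\ref{further} for $K\in {\mathcal{K}}^n_n$). Hence $I_H K \subset F_H K \subset \di_H K \subset M_H K$.

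With the sandwich in place, Theorem~\ref{Successive} applies verbatim and yields that every $M$-universal sequence $(H_m)$ in ${\mathcal{G}}(n,i)$ is weakly $\di$-universal, completing the proof. I do not anticipate any genuine obstacle here; the only point requiring a moment's care is the identification of $I_H K$ with a subset of $F_H K$ via Corollary~\ref{Newcharcor}, which reduces the corollary to a one-line appeal to Theorem~\ref{Successive}.
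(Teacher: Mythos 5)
Your proof is correct and follows the paper's overall strategy: establish the sandwich $I_HK\subset\di_HK\subset M_HK$ for every $H\in{\mathcal{G}}(n,i)$ and $K\in{\mathcal{K}}^n_n$, then quote Theorem~\ref{Successive}. The only divergence is in the left-hand inclusion: the paper obtains $I_HK\subset\di_HK$ directly from Theorem~\ref{IdemGab2} (whose hypotheses---invariance on $H$-symmetric spherical cylinders, invariance under translations orthogonal to $H$ of $H$-symmetric sets, monotonicity, and ${\mathcal{B}}={\mathcal{K}}^n_n$---are all available here), whereas you deduce it from $I_HK\subset F_HK$ combined with the inclusion $F_HK\subset\di_HK$ already supplied by Corollary~\ref{IdemGab}, proving $I_HK\subset F_HK$ via Corollary~\ref{Newcharcor} together with the Section~\ref{further} description of $I_HK$ (for convex bodies) as the closure of the union of $H$-symmetric spherical cylinders having a translate orthogonal to $H$ inside $K$, and using compactness of $F_HK$ to pass to the closure. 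Both routes are sound; yours yields the slightly stronger chain $I_HK\subset F_HK\subset\di_HK\subset M_HK$ and isolates $I_HK\subset F_HK$ as a fact about the symmetrals themselves, independent of $\di$, while the paper's appeal to Theorem~\ref{IdemGab2} needs only invariance on cylinders (rather than on all $H$-symmetric sets) for that half of the sandwich and does not rely on the closure identification of $I_HK$.
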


\begin{proof}
By Corollary~\ref{IdemGab} and Theorem~\ref{IdemGab2}, (\ref{SdiM}) holds
for all $H\in {\mathcal{G}}(n,i)$ and all $K\in {\mathcal{K}}^n_n$.  The result follows directly from Theorem~\ref{Successive}.
\end{proof}

Examples~\ref{Vex2} and~\ref{exoct2}, both with $j=1$ (say) and $B^n$ replaced by an $H$-symmetric $n$-dimensional cube, show that the assumptions of invariance on $H$-symmetric sets and monotonicity, respectively, cannot be dropped in the previous corollary.  We do not have an example showing that the assumption of invariance under translations orthogonal to $H$ of $H$-symmetric sets is necessary.  However, the following example shows that if this assumption is omitted, the hypotheses of Corollary~\ref{Successivecor} do not allow the stronger conclusion that $(H_m)$ is $\di$-universal.

\begin{ex}\label{weakdi}
{\em  Let $\di$ be the symmetrization process corresponding to the symmetrization $\di_H$ in Example~\ref{Vexlast}, with $n=2$ and $i=1$.
Let $0<\theta<\pi/2$ be an irrational multiple of $\pi$ and let $H_1$ be the line through the origin in the direction $(\cos\theta,\sin\theta)$.
For $m\in \N$, let $H_{2m+1}=H_1$ and $H_{2m}=e_2^{\perp}$.  Then the sequence $(H_m)$ is $S$-universal; see, for example, \cite[Corollary~5.4]{Kla12}.

We claim that $(H_m)$ is weakly $\di$-universal.  (Note that this is not a consequence of Corollary~\ref{Successivecor}.) To see this, let $K$ be a planar convex body, let $j\in \N$, and let $D\subset \di_jK$ be a disk of radius $r>0$ whose center $x_j\in H_j$ is at distance $\|x_j\|$ from the origin and at distance $d\ge 0$, say, from $H_{j+1}$.  Then the distance from $D$ to $H_{j+1}$ is no larger than $d$ and the definition of $\di$ implies that $\di_{j+1}D$ is an ellipse with center $x_{j+1}=x_j|H_{j+1}$ at distance $\|x_j\|\cos\theta$ from the origin and area at least $e^{-d}\pi r^2$.  The distance from $x_{j+1}$ to $H_{j+2}$ is $d\cos\theta$, so similarly, $\di_{j,j+2}D$ is an ellipse with center $x_{j+2}=x_{j+1}|H_{j+2}$ at distance $\|x_j\|\cos^2\theta$ from the origin and area at least $e^{-d\cos\theta}e^{-d}\pi r^2$.  Arguing inductively, we see that for $m\ge j+1$, $\di_{j,m}D$ is an ellipse with center $x_{m}\in H_{m}$ at distance $\|x_j\|\cos^{m-j}\theta$ from the origin and area at least $$\big(\prod_{j=0}^{m-j-1}e^{-d\cos^j\theta}\big)\pi r^2=e^{-d\sum_{j=0}^{m-j-1}\cos^j\theta}\pi r^2\ge e^{-d\sum_{j=0}^{\infty}\cos^j\theta}\pi r^2=e^{-d/(1-\cos\theta)}\pi r^2=c,$$
say.  Now $\di_{j,m}D\subset S_{j,m}D$, which is a disk of radius $r$, so the diameter of $\di_{j,m}D$ is no larger than $2r$. Then the $H_{m}$-symmetric ellipse $\di_{j,m}D$ is contained in a rectangle of width $V_1((\di_{j,m}D)|H_{m})=V_1((\di_{j,m}D)\cap H_{m})$ and height $2r$, so the lower bound for the area of $\di_{j,m}D$ gives
$$V_1((\di_{j,m}D)\cap H_{m})\ge c/(2r)$$
for each $m\ge j+1$.  The distance from the center $x_{m}$ of $\di_{j,m}D$ to the origin is $\|x_j\|\cos^{m-j}\theta\to 0$ as $m\to \infty$, so there is an $m_0=m_0(j)$ such that $o\in \di_{j,m_0}D\subset \di_{j,m_0}K$.  Then for $m\ge m_0+1$, we have
$\di_{j,m}K=\di_{m_0+1,m}(\di_{j,m_0}K)=S_{m_0+1,m}(\di_{j,m_0}K)$. Since $(H_m)$ is $S$-universal, $\di_{j,m}K\to r(j,K)B^n$ for some $r(j,K)>0$, proving the claim.

Now let $K=[-a,a]\times [1,1+b]$, where $a,b>0$ are chosen so that $K\cap H_1\neq\emptyset$ and $o\in K|H_1$.  Then $o\in\di_1K=S_1K$, from which it follows that $\di_{1,m}K=S_{1,m}K$ and hence that $V_2(\di_{1,m}K)=2ab$ for $m\in \N$.  On the other hand, $\di_2K=[-a,a]\times [-b/(2e),b/(2e)]$, so $o\in \di_{2,m}K=S_{2,m}(\di_2K)$ yields $V_2(\di_{2,m}K)=2ab/e$ for $m\ge 2$.  It follows that $r(1,K)\neq r(2,K)$, proving that $(H_m)$ is not $\di$-universal.
\qed}
\end{ex}

\section{Characterizations of Minkowski symmetrization}\label{MinS}

We begin with the following basic characterization.

\begin{thm}\label{MinTrans}
Let $H\in {\mathcal{G}}(n,i)$, $i\in\{0,\dots,n-1\}$.  If ${\di}:{\mathcal{K}}^n\rightarrow{\mathcal{K}}^n_H$ is monotonic, invariant on $H$-symmetric sets, and linear (i.e., $\di(K+L)=\di K+\di L$ for all $K, L\in {\mathcal{K}}^n$), then $\di$ is Minkowski symmetrization with respect to $H$.
\end{thm}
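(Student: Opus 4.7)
The plan is to invoke Corollary~\ref{IdemGab} to obtain the one-sided inclusion $\di K\subset M_HK$, and then to use linearity one more time to upgrade this to equality. The only hypothesis of Corollary~\ref{IdemGab} that is not in our assumptions is invariance under translations orthogonal to $H$ of $H$-symmetric sets, and this will be extracted by first pinning down how $\di$ acts on singletons.

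First I would show that $\di\{y\}=\{y|H\}$ for every $y\in\R^n$. Since $\{y\}+\{y^{\dagger}\}=\{2(y|H)\}\subset H$ is $H$-symmetric, linearity together with invariance on $H$-symmetric sets yields $\di\{y\}+\di\{y^{\dagger}\}=\{2(y|H)\}$. A Minkowski sum equal to a singleton forces each summand to be a singleton, so write $\di\{y\}=\{z\}$ and $\di\{y^{\dagger}\}=\{w\}$ with $z,w\in H$ (as $H$-symmetric singletons) and $z+w=2(y|H)$. Monotonicity applied to the inclusion of $\{y\}$ in the $H$-symmetric segment $[y|H-y|H^{\perp},\,y|H+y|H^{\perp}]$ (which is fixed by $\di$) then pins $z$ to $H\cap[y|H-y|H^{\perp},\,y|H+y|H^{\perp}]=\{y|H\}$. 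Consequently, if $K\in{\mathcal{K}}^n$ is $H$-symmetric and $y\in H^{\perp}$, then $\di(K+y)=\di K+\di\{y\}=\di K+\{o\}=\di K$, so $\di$ is invariant under translations orthogonal to $H$ of $H$-symmetric sets. All three hypotheses of Corollary~\ref{IdemGab} are now in force and give $\di K\subset M_HK$ for every $K\in{\mathcal{K}}^n$.

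For the final step, observe that $K+K^{\dagger}$ is $H$-symmetric, so linearity together with invariance on $H$-symmetric sets gives $\di K+\di K^{\dagger}=\di(K+K^{\dagger})=K+K^{\dagger}=2M_HK$. Applying the previous inclusion to both $K$ and $K^{\dagger}$ and using $M_HK^{\dagger}=M_HK$, each of $\di K$ and $\di K^{\dagger}$ is contained in $M_HK$. Passing to support functions, the inequalities $h_{\di K}\le h_{M_HK}$ and $h_{\di K^{\dagger}}\le h_{M_HK}$, combined with the equality $h_{\di K}+h_{\di K^{\dagger}}=2h_{M_HK}$, force $h_{\di K}=h_{M_HK}$, and hence $\di K=M_HK$. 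I expect the only real hurdle to be the singleton identification; once it supplies translation invariance for free, the heavy lifting is handled by Corollary~\ref{IdemGab}, and the closing support-function squeeze is a short and pleasant use of the linearity hypothesis one final time.
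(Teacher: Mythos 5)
Your proposal is correct and follows essentially the same route as the paper: pin down $\di$ on singletons via linearity, invariance on $H$-symmetric sets, and monotonicity (your segment $[y|H-y|H^{\perp},\,y|H+y|H^{\perp}]$ is exactly the paper's $[y,y^{\dagger}]$), deduce invariance under translations orthogonal to $H$, apply Corollary~\ref{IdemGab} to get $\di K\subset M_HK$, and then use linearity on the $H$-symmetric set $K+K^{\dagger}$ to force equality. The only cosmetic difference is that you close with a support-function squeeze where the paper invokes the cancellation law for Minkowski addition; these are equivalent.
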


\begin{proof}
If $y\in H^{\perp}$, then
$$\di\{y\}+\di\{y^{\dagger}\}=\di(\{y\}+\{y^{\dagger}\})=\di\{o\}=\{o\},$$
since $\{o\}$ is an $H$-symmetric set. This implies that both $\di\{y\}$ and $\di\{y^\dagger\}$ are singletons. Moreover, by the monotonicity,  both are contained in $\di [y,y^\dag]=[y,y^\dagger]$. Thus $\di\{y\}$ is an $H$-symmetric singleton contained in $[y,y^\dagger]$ and it follows that $\di\{y\}=\{o\}$.  If $K\in {\mathcal{K}}^n$, we then have
$$\di(K+y)=\di K+\di\{y\}=\di K,$$
so $\di$ is invariant under translations orthogonal to $H$. By Corollary~\ref{IdemGab}, $\di K\subset M_HK$ and $\di K^{\dagger}\subset M_H(K^{\dagger})$.  Moreover,
$$K+K^\dagger=\di (K+K^\dagger)=\di K+\di K^\dagger \subset M_H K+\di K^\dagger \subset M_H K+ M_H K^\dagger= M_H(K+K^\dagger)=K+K^\dagger,$$
since $K+K^{\dagger}$ is $H$-symmetric. This implies that $\di K+\di K^\dagger = M_H K+\di K^\dagger$ and by the cancelation law for Minkowski addition \cite[p.~139]{Sch93}, we have $\di K=M_H K$.
\end{proof}

Part (iii) of the following result yields a new characterization of central symmetrization, since $M_H=\triangle$ when $i=0$.

\begin{thm}\label{MinChar}
Let $H\in {\mathcal{G}}(n,i)$, $i\in\{0,\dots,n-1\}$, and let ${\mathcal{B}}={\mathcal{K}}^n$ or ${\mathcal{B}}={\mathcal{K}}^n_n$. Suppose that ${\di}:{\mathcal{B}}\rightarrow{\mathcal{B}}_H$ is monotonic.  Assume in addition either that

{\rm{(i)}} $i=n-1$ and $\di$ is mean width preserving and either invariant on $H$-symmetric spherical cylinders or projection invariant, or that

{\rm{(ii)}} $i\in\{1,\dots,n-1\}$ and $\di$ is mean width preserving, invariant on $H$-symmetric sets, and invariant under translations orthogonal to $H$ of $H$-symmetric sets, or that

{\rm{(iii)}} $i=0$ and $\di$ is invariant on $o$-symmetric sets and invariant under translations of $o$-symmetric sets.

Then $\di$ is Minkowski symmetrization with respect to $H$.
\end{thm}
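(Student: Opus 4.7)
The plan is to reduce all three cases to the single observation that $\di K \subset M_HK$ together with mean width preservation (and strict monotonicity of $V_1$ on $\mathcal{K}^n$) forces $\di K = M_HK$. The three cases differ only in how the inclusion $\di K \subset M_HK$ is established.

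First, I would dispatch case (iii), which is essentially immediate. When $i=0$, we have $H=\{o\}$, $H^{\perp}=\R^n$, so invariance under translations of $o$-symmetric sets is exactly invariance under translations orthogonal to $H$ of $H$-symmetric sets, and moreover $F_HK = M_HK = \Delta K$ directly from the definitions. Thus the hypotheses of Corollary~\ref{IdemGab} are met, and it yields $\Delta K = F_HK \subset \di K \subset M_HK = \Delta K$, so $\di K = \Delta K = M_HK$.

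Next, for case (ii), Corollary~\ref{IdemGab} applies verbatim and supplies $F_HK \subset \di K \subset M_HK$. Since $\di$ is mean width preserving and Minkowski symmetrization is also mean width preserving, $V_1(\di K) = V_1(K) = V_1(M_HK)$. Using that $V_1$ is strictly monotonic on $\mathcal{K}^n$, the inclusion $\di K \subset M_HK$ with equal first intrinsic volumes forces $\di K = M_HK$. For case (i), the hypotheses are exactly those of Theorem~\ref{SchwarzF} with $F=V_1$ (mean width is strictly increasing on $\mathcal{K}^n$ and translation invariant, hence in particular invariant under translations orthogonal to $H$ of $H$-symmetric sets). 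That theorem produces $S_HK \subset \di K \subset M_HK$ (and, as a byproduct, the translation invariance property needed above, though we do not rely on this directly here). Then mean width preservation and strict monotonicity of $V_1$ again yield $\di K = M_HK$.

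There is no substantial obstacle; the only subtlety is verifying that each set of hypotheses feeds cleanly into either Corollary~\ref{IdemGab} or Theorem~\ref{SchwarzF}, and in noticing that in case (iii) the definition of $F_H$ collapses to $M_H$ so that Corollary~\ref{IdemGab} alone concludes the proof without invoking mean width at all. The role of strict monotonicity of $V_1$ is the ``squeeze'' that turns the one-sided inclusion into equality and is thereby the reason mean width preservation (rather than, say, volume preservation, which fails to play this role since $V_n$ is increased by $M_H$) is the correct hypothesis.
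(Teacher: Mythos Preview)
Your proof is correct and follows essentially the same route as the paper: case (iii) via Corollary~\ref{IdemGab} using $F_H=M_H$ when $i=0$, case (ii) via Corollary~\ref{IdemGab}, and case (i) via Theorem~\ref{SchwarzF} with $F=V_1$, in each case finishing with the strict monotonicity of $V_1$ to upgrade $\di K\subset M_HK$ to equality. The only difference is the order in which you treat the cases.
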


\begin{proof}
Let $K\in {\mathcal{B}}$. For part (i), we can appeal to Theorem~\ref{SchwarzF} with $F=V_1$, the first intrinsic volume, to conclude that $\di K\subset M_HK$.  Then, since both $\di$ and $M_H$ preserve mean width, we obtain
$$V_1(K)=V_1(\di K)\le V_1(M_HK)=V_1(K).$$
Hence $V_1(\di K)=V_1(M_HK)$ and with $\di K\subset M_HK$ we conclude that $\di K=M_HK$.

For part (ii), we may appeal to Corollary~\ref{IdemGab} to get $\di K\subset M_HK$ and then the conclusion follows as before.

Part (iii) is an immediate consequence of Corollary~\ref{IdemGab}, since when $i=0$ we have $F_H=M_H$.
\end{proof}

Minkowski-Blaschke symmetrization shows that the restriction to $i=n-1$ cannot be dropped in (i).  Examples~\ref{Vex2}, \ref{ex2nww}, and \ref{ex3nww}, all with $j=1$, show that none of the other assumptions in (i) can be omitted.  (Note that if $\di$ is $V_1$-preserving, then $\di K\subset M_HK$ implies that $\di K=M_HK$.)

Concerning (ii) and (iii), Examples~\ref{Vex2} and~\ref{exoct2} (which are also valid when $i=0$), both with $j=1$, show that the invariance on $H$-symmetric sets and monotonicity cannot be omitted. For (ii), fiber symmetrization $F_H$ shows that the mean width preserving property cannot be dropped.  For (iii), Example~\ref{Vexlast} (which is also valid when $i=0$) proves that the assumption of invariance under translations orthogonal to $H$ of $H$-symmetric sets is necessary.  We do not have an example to show that the latter assumption is necessary for (ii); see Problem~\ref{prob3}.

\section{Characterizations of Steiner symmetrization}\label{SS}

\begin{thm}\label{SteinerCompact}
$\mathrm{(i)}$ Let $i\in \{1,\dots,n-1\}$ and let $H\in {\mathcal{G}}(n,i)$.  Suppose that $\di:{\mathcal{C}}^n\rightarrow {\mathcal{C}}^n_{H}$ is an $i$-symmetrization that is monotonic, volume preserving, and invariant on $H$-symmetric spherical cylinders.  Then
\begin{equation}\label{mainineq}
{\mathcal{H}}^{n-i}\left((\di K)\cap (H^{\perp}+x)\right)={\mathcal{H}}^{n-i}\left(K\cap (H^{\perp}+x)\right)
\end{equation}
for all $K\in {\mathcal{C}}^n$ and ${\mathcal{H}}^i$-almost all $x\in H$.

$\mathrm{(ii)}$ Suppose that $i=n-1$ and that in addition to the assumptions in $\mathrm{(i)}$, $(\di K)\cap (H^\perp+x)$ is a line segment for ${\mathcal{H}}^{n-1}$-almost all $x\in H$.  Then $\di$ is essentially Steiner symmetrization on ${\mathcal{C}}^n$, in the sense that for all $K\in {\mathcal{C}}^n$, $(\di K)\cap G=(S_HK)\cap G$ for ${\mathcal{H}}^{n-1}$-almost all lines $G$ orthogonal to $H$.
\end{thm}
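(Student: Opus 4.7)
The plan is to reduce Part~(i) to a pointwise inequality between fiber measures via a localization argument, and then deduce it almost everywhere by the Lebesgue differentiation theorem, using the volume preservation to upgrade inequality to equality. Part~(ii) will then follow essentially by inspection.

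For Part~(i), fix $K\in {\mathcal{C}}^n$ and write $f(y)={\mathcal{H}}^{n-i}(K\cap(H^\perp+y))$ and $g(y)={\mathcal{H}}^{n-i}((\di K)\cap(H^\perp+y))$ for $y\in H$. These are measurable (since $K$ and $\di K$ are compact) and integrable, and Fubini's theorem gives $\int_H f=V_n(K)=V_n(\di K)=\int_H g$, the middle equality coming from the volume preserving property of $\di$. The key step is to show that for every $x\in H$ and every $r>0$,
\begin{equation*}
\int_{D_r(x)} f(y)\,dy\ \le\ \int_{D_r(x)} g(y)\,dy.
\end{equation*}
To prove this, choose $s>0$ large enough so that the compact $H$-symmetric spherical cylinder $C=D_r(x)+s(B^n\cap H^\perp)$ satisfies $K\cap(D_r(x)\times H^\perp)\subset C$; such an $s$ exists because $K$ is compact. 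Then, by Fubini, the left-hand integral equals $V_n(K\cap C)$. Now $K\cap C\subset C$ and the invariance of $\di$ on $H$-symmetric spherical cylinders together with monotonicity yield $\di(K\cap C)\subset \di C=C$, while monotonicity also gives $\di(K\cap C)\subset \di K$, so $\di(K\cap C)\subset(\di K)\cap C$. Applying the volume preserving property to $K\cap C$ and then Fubini to $(\di K)\cap C$, we obtain
\begin{equation*}
V_n(K\cap C)=V_n(\di(K\cap C))\le V_n((\di K)\cap C)\le\int_{D_r(x)} g(y)\,dy,
\end{equation*}
which is the desired inequality.

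With this ball inequality in hand, the Lebesgue differentiation theorem (applied on $H\cong\R^i$) gives $f(y)\le g(y)$ for ${\mathcal{H}}^i$-a.e. $y\in H$. Combined with $\int_H f=\int_H g$, this forces $f=g$ almost everywhere, which is precisely (\ref{mainineq}). The only delicate point is making sure $s$ can be chosen uniformly (it cannot be, but it depends only on $x$, $r$, and $K$, which is all that is needed since the Vitali--Lebesgue argument is local).

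For Part~(ii), now $i=n-1$, so each section $(\di K)\cap(H^\perp+y)$ lies on the line $G_y=H^\perp+y$. Since $\di K\in {\mathcal{C}}^n_H$ is $H$-symmetric, each section $(\di K)\cap G_y$ is symmetric with respect to the point $G_y\cap H$. By assumption, this section is a line segment for ${\mathcal{H}}^{n-1}$-almost every $y$; being symmetric about $G_y\cap H$, it is then a closed line segment centered on $H$. By Part~(i), its length equals ${\mathcal{H}}^1(K\cap G_y)$ for a.e. $y$. But these two properties (closed segment centered on $H$, with length equal to that of $K\cap G_y$) are exactly the defining conditions of the Steiner symmetral $(S_HK)\cap G_y$, so the two coincide for ${\mathcal{H}}^{n-1}$-almost all lines $G_y$ orthogonal to $H$, as claimed. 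The main obstacle in the whole argument is the clean setup in Part~(i) of the cylinder $C$ adapted to $D_r(x)$; everything else is bookkeeping with Fubini and Lebesgue differentiation.
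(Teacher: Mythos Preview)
Your proof is correct and follows essentially the same approach as the paper: localize with $H$-symmetric spherical cylinders $C=D_r(x)+s(B^n\cap H^\perp)$, use monotonicity and invariance to get $\di(K\cap C)\subset(\di K)\cap C$, apply volume preservation to obtain the ball-average inequality, pass to the pointwise inequality $f\le g$ via Lebesgue differentiation, and then use the global equality of integrals to force $f=g$ a.e.; Part~(ii) is immediate from this. One minor remark: contrary to your parenthetical, $s$ \emph{can} be chosen uniformly in $x$ and $r$ --- the paper simply takes any $s>0$ with $K|H^\perp\subset s(B^n\cap H^\perp)$ --- which slightly streamlines the setup but does not change the argument.
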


\begin{proof}
$\mathrm{(i)}$  Fix $K\in {\mathcal{C}}^n$.  For $r>0$, let $D_r(x)$ denote the $i$-dimensional ball in $H$ with center $x$ and radius $r$.  Choose $s>0$ so that $K|H^\perp \subset s(B^n\cap H^{\perp})$.  Let $C_r(x)=D_r(x)+s(B^n\cap H^{\perp})$ and for all $L\in {\mathcal{C}}^n$ and $x\in H$, define
$$
m_{r,L}(x) = {\mathcal{H}}^n(L\cap C_r(x))
$$
and
\begin{equation}\label{mkk}
m_{L}(x) = {\mathcal{H}}^{n-i}(L\cap (H^\perp+x)).
\end{equation}
We first claim that
\begin{equation}\label{mrdi}
m_{r,\di K} \geq m_{r,K}.
\end{equation}
To see this, let $r>0$ and $x\in H$.  By the invariance on $H$-symmetric spherical cylinders, $\di C_r(x) = C_r(x)$. Since $\di$ is monotonic,
\begin{equation}\label{dik}
\di \big(K\cap C_r(x)\big)\subset (\di K)\cap C_r(x).
\end{equation}
Therefore, using the fact that $\di$ is volume preserving, we obtain
$$
m_{r,\di K}(x)={\mathcal{H}}^n((\di K)\cap C_r(x)) \geq {\mathcal{H}}^n(\di (K\cap C_r(x))) = {\mathcal{H}}^n(K\cap C_r(x)) = m_{r,K}(x).
$$
This proves (\ref{mrdi}).  Next, it follows immediately from Lebesgue's differentiation theorem (see e.g.~\cite[Theorem~8.8]{Rud87}) that
\begin{equation}\label{lebthm}
 \lim_{r\to 0} \frac{m_{r,K}(x)}{{\mathcal{H}}^i(D_r(x))} = m_K(x)~\quad~\text{ and }~\quad~\lim_{r\to 0} \frac{m_{r,\di K}(x)}{{\mathcal{H}}^i(D_r(x))} = m_{\di K}(x)
\end{equation}
for ${\mathcal{H}}^i$-almost all $x\in H$.  From (\ref{mkk}) and the volume-preserving property of $\di$, we obtain
$$
\int_H \left(m_{\di K}(x)-m_K(x)\right)\, dx = {\mathcal{H}}^n(\di K)-{\mathcal{H}}^n(K)=0.
$$
By (\ref{mrdi}) and (\ref{lebthm}), the previous integrand is nonnegative for ${\mathcal{H}}^i$-almost all $x\in H$ and therefore vanishes ${\mathcal{H}}^i$-almost everywhere in $H$, yielding (\ref{mainineq}).

$\mathrm{(ii)}$  This follows directly from (\ref{mainineq}) and the definition of $S_HK$.
\end{proof}

We remark that the assumptions in part (i) of the previous theorem can be weakened, since the result remains true if the monotonicity and invariance on $H$-symmetric spherical cylinders only hold modulo sets of zero ${\mathcal{H}}^n$-measure.  In this case the conclusion in part (ii) is also slightly weaker, namely that for all $K\in {\mathcal{C}}^n$, ${\mathcal{H}}^n((\di K)\triangle S_HK)=0$.  The extra assumption made in part (ii) is very strong, but the following example indicates that it may be difficult to weaken it significantly.

\begin{ex}\label{exFir1}
{\em Let $H=e_n^{\perp}$. For each $K\in {\mathcal{C}}^n$, let $\di_1 K=S_H(K\cap (\R^{n-1}\times [-1,1]))$, let
$$\di_2 K=S_H(K\cap (\R^{n-1}\times ((-\infty, -1)\cup(1,\infty)))),$$
and let $\di K=(\di_1 K)\cup (((\di_2K)\cap (\R^{n-1}\times [0,\infty)))+e_n)\cup (((\di_2K)\cap (\R^{n-1}\times (-\infty,0]))-e_n)$.  Since both $\di_1$ and $\di_2$ are monotonic and preserve the volume of the subset of $K$ on which they act, it follows easily that $\di:{\mathcal{C}}^n\to {\mathcal{C}}^n_H$ is monotonic and volume preserving.  It is straightforward to check that $\di$ is also invariant on $H$-symmetric spherical cylinders, but it is essentially different from Steiner symmetrization with respect to $H$.  Note that while the assumption in Theorem~\ref{SteinerCompact}(ii) is false,  $(\di K)\cap (H^{\perp}+x)$ is the union of at most three line segments for all $x\in H$.
\qed}
\end{ex}

In the following, we write ${\mathcal{K}}^n_{nH}$ instead of $({\mathcal{K}}^n_{n})_{H}$ for the class of convex bodies in $\R^n$ that are $H$-symmetric.

\begin{thm}\label{Schwarz}
Let $i\in \{1,\dots,n-1\}$, let $H\in {\mathcal{G}}(n,i)$, and let $\di:{\mathcal{K}}^n_n\rightarrow {\mathcal{K}}^n_{nH}$ be an $i$-symmetrization.  Suppose that $\di$ is monotonic, volume preserving, and projection invariant.  Then
\begin{equation}\label{vni}
V_{n-i}\left((\di K)\cap (H^{\perp}+x)\right)=V_{n-i}\left(K\cap (H^{\perp}+x)\right)
\end{equation}
for all $K\in {{\mathcal{K}}^n_n}$ and $x\in H$.
\end{thm}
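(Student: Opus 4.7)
My plan is to show that for every $K \in {\mathcal{K}}^n_n$ and every compact convex set $D \subset K|H$ with nonempty interior in $H$,
$$
\int_D V_{n-i}(K \cap (H^{\perp} + x))\, dx \le \int_D V_{n-i}((\di K) \cap (H^{\perp} + x))\, dx,
$$
and then to upgrade this to pointwise equality via a fine convex partition together with the overall volume equality.

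For the integral inequality, I would set $L = K \cap (D + H^{\perp})$. A standard check shows that $L \in {\mathcal{K}}^n_n$ (using that $D$ has nonempty interior in $H$ and $D \subset K|H$, so that $D$ meets $\relint(K|H)$ and the fiber of $K$ over a point of $\relint D$ is $(n-i)$-dimensional) and that $L|H = D$. Monotonicity of $\di$ gives $\di L \subset \di K$, and projection invariance applied to $L$ gives $(\di L)|H = L|H = D$, hence $\di L \subset D + H^{\perp}$. Combining, $\di L \subset (\di K) \cap (D + H^{\perp})$, and volume preservation together with Fubini's theorem yield
$$
\int_D V_{n-i}(K \cap (H^{\perp} + x))\, dx = V_n(L) = V_n(\di L) \le V_n((\di K) \cap (D + H^{\perp})) = \int_D V_{n-i}((\di K) \cap (H^{\perp} + x))\, dx.
$$

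To conclude, for each $\ee > 0$ I would cover $H$ by a grid of axis-aligned cubes of side $\ee$, intersect each with $K|H$, and discard those pieces with empty $H$-interior (which have ${\mathcal H}^i$-measure zero). This partitions $K|H$ into finitely many convex pieces $D_j$ to which the integral inequality above applies. Summing over $j$ and using volume preservation together with projection invariance (which gives $(\di K)|H = K|H$), both $\sum_j \int_{D_j} V_{n-i}(K \cap (H^{\perp} + x))\, dx$ and the analogous sum for $\di K$ equal $V_n(K)$; hence equality is forced in each summand. Letting $\ee \to 0$, the Lebesgue differentiation theorem gives \eqref{vni} at ${\mathcal H}^i$-almost every $x \in K|H$. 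Since the parallel section function of a convex body is continuous on its projection (by the $1/(n-i)$-concavity coming from Brunn--Minkowski applied both to $K$ and to $\di K$), equality extends to every $x \in K|H$; for $x \in H \setminus K|H$ both sides of \eqref{vni} vanish by projection invariance. The main substantive point is the construction $L = K \cap (D + H^{\perp})$ and the interplay of all three hypotheses in producing the key inequality; once that is in place, the partition-and-squeeze is routine.
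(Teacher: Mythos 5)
Your core argument is essentially the paper's own proof in slightly different clothing. The paper intersects $K$ with bounded cylinders $C_r(x)=D_r(x)+s(B^n\cap H^{\perp})$ over small balls $D_r(x)\subset H$, uses monotonicity together with projection invariance to get $\di(K\cap C_r(x))\subset(\di K)\cap C_r(x)$, and then combines volume preservation with the Lebesgue differentiation theorem; your construction $L=K\cap(D+H^{\perp})$ for convex $D\subset K|H$ with nonempty interior in $H$, followed by the grid partition and differentiation, is the same mechanism, and those steps (including $L\in{\mathcal{K}}^n_n$, $L|H=D$, the Fubini computation, and the termwise squeeze) are correct.

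The step that is wrong as stated is the final appeal to continuity. The section function $f_K(x)=V_{n-i}\left(K\cap(H^{\perp}+x)\right)$ of a convex body is \emph{not} in general continuous on all of $K|H$ when $i\ge 2$: Brunn--Minkowski makes $f_K^{1/(n-i)}$ concave on $K|H$, and concavity yields continuity only on $\relint(K|H)$; at relative boundary points $f_K$ is merely upper semicontinuous. For instance, with $n=3$, $i=2$, $H=e_3^{\perp}$, $p=(1,0,0)$, and $K=\conv\left((B^3\cap H)\cup[p,p+e_3]\right)$, the fiber over $p$ has length $1$ while the fibers over all other boundary points of the disk $K|H$ are singletons, so $f_K$ is discontinuous at $p$. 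Consequently your argument proves \eqref{vni} on $\relint(K|H)$ (where both section functions are continuous and agree a.e.) and trivially for $x\notin K|H$, but the relative boundary of $K|H$ requires a separate argument. It can be patched using the hypotheses again: for $x_0$ in the relative boundary and $y_0\in\relint(K|H)$, set $x_t=(1-t)x_0+ty_0$; concavity gives $\liminf_{t\to 0^+}f_K(x_t)\ge f_K(x_0)$, the fibers of $\di K$ over $x_t$ have these same volumes and subconverge in the Hausdorff metric to a compact convex subset of $(\di K)\cap(H^{\perp}+x_0)$, so $V_{n-i}\left((\di K)\cap(H^{\perp}+x_0)\right)\ge f_K(x_0)$; for the reverse inequality, use monotonicity against $K+\ee B^n$, for which $x_0\in\relint\left((K+\ee B^n)|H\right)$, to get $V_{n-i}\left((\di K)\cap(H^{\perp}+x_0)\right)\le f_{K+\ee B^n}(x_0)\to f_K(x_0)$ as $\ee\to 0$. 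To be fair, the paper dispatches this same point with the single phrase ``by the projection invariance of $\di$ and continuity,'' so you are glossing over the same spot as the authors; but the specific justification you give (continuity of the section function on all of the projection via $1/(n-i)$-concavity) is false, so you should restrict the continuity claim to $\relint(K|H)$ and add a boundary argument along the above lines.
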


\begin{proof}
Let $K\in {\mathcal{K}}^n_n$.  For $r>0$, let $D_r(x)$ denote the $i$-dimensional ball in $H$ with center $x$ and radius $r$.  Choose $s>0$ so that $(K\cup \di K)|H^\perp \subset s(B^n\cap H^{\perp})$.  Let $x\in \relint(K|H)=\relint((\di K)|H)$, let $C_r(x)=D_r(x)+s(B^n\cap H^{\perp})$, and note that $K\cap C_r(x)\in {{\mathcal{K}}^n_n}$.  From $K\cap C_r(x)\subset K$, the monotonicity of $\di$ gives $\di(K\cap C_r(x))\subset \di K$.
The projection invariance of $\di$ implies that
$$(\di(K\cap C_r(x)))|H=(K\cap C_r(x))|H\subset D_r(x)=C_r(x)|H$$
and hence, since $(\di K)|H^{\perp}\subset C_r(x)|H^{\perp}$, we have
\begin{equation}\label{cont}
\di(K\cap C_r(x))\subset (\di K)\cap C_r(x).
\end{equation}
With (\ref{cont}) in hand, the proof of Theorem~\ref{SteinerCompact}(i) may be followed from (\ref{dik}) onwards to conclude that (\ref{vni}) holds for ${\mathcal{H}}^i$-almost all $x\in \relint(K|H)$. By the projection invariance of $\di$ and continuity, this is enough to yield (\ref{vni}) for all $x\in H$.
\end{proof}

\begin{cor}\label{Steiner}
Let $H\in {\mathcal{G}}(n,n-1)$ and let $\di:{{\mathcal{K}}^n_n}\rightarrow {{\mathcal{K}}^n_n}_H$ be an $(n-1)$-symmetrization.  If $\di$ is monotonic, volume preserving, and either invariant on $H$-symmetric spherical cylinders or projection invariant, then $\di$ is Steiner symmetrization with respect to $H$.
\end{cor}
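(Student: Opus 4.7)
The plan is to reduce the claim to the already established Theorem~\ref{Schwarz} (which assumes projection invariance and yields an $(n-i)$-volume equation for every slice by an $(n-i)$-plane orthogonal to $H$), and then exploit the rigidity of convex $H$-symmetric sets sliced by a line to recover Steiner symmetrization on the nose.

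First, I would reconcile the two alternative hypotheses. Under either of them, $\di:\mathcal{K}^n_n\to\mathcal{K}^n_{nH}$ is monotonic and volume preserving. Since on $\mathcal{K}^n_n$ the set function $V_n$ is strictly increasing (if $K\subsetneq L$ are convex bodies, pick $x\in L\setminus K$; by convexity $x$ can be connected to an interior point of $L$ by a segment lying in $L$, and since $K$ is closed a short initial subsegment lies in $\operatorname{int}(L)\setminus K$, producing an open set of positive measure in $L\setminus K$), Theorem~\ref{SchwarzFpre} applies with $F=V_n$. That theorem tells us that, in the presence of monotonicity and $V_n$-preservation, the three invariance properties (invariance on $H$-symmetric sets, invariance on $H$-symmetric spherical cylinders, and projection invariance) are equivalent. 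Hence each of the two alternative hypotheses in Corollary~\ref{Steiner} upgrades automatically to projection invariance.

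Next, I would invoke Theorem~\ref{Schwarz} with $i=n-1$, so that $n-i=1$. That yields
\[
V_1\bigl((\di K)\cap(H^{\perp}+x)\bigr)=V_1\bigl(K\cap(H^{\perp}+x)\bigr)
\]
for every $K\in\mathcal{K}^n_n$ and every $x\in H$.

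Finally, because $\di K\in\mathcal{K}^n_{nH}$ is compact, convex, and $H$-symmetric with $\dim H=n-1$, each section $(\di K)\cap(H^{\perp}+x)$ is (if nonempty) a closed line segment whose midpoint lies in $H$. Together with the length identity above, this matches verbatim the definition of the Steiner symmetral $S_HK$, namely that for every line $G$ orthogonal to $H$ meeting $K$, the set $G\cap S_HK$ is the closed segment centered on $H$ with $V_1(G\cap S_HK)=V_1(G\cap K)$. Thus $\di K=S_HK$ for all $K\in\mathcal{K}^n_n$, as required. The real content of the argument has already been absorbed by Theorems~\ref{Schwarz} and~\ref{SchwarzFpre}; the only remaining step is the essentially trivial observation that a convex $H$-symmetric body is determined slice-by-slice by the $1$-dimensional measures of its sections orthogonal to $H$, which is what makes the case $i=n-1$ rigid where Schwarz-type statements for smaller $i$ would only give equality of $(n-i)$-volumes of sections.
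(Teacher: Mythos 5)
Your proof is correct, and one half of it diverges from the paper's. For the projection-invariant alternative you argue exactly as the paper does: Theorem~\ref{Schwarz} with $i=n-1$ gives the length identity for every line orthogonal to $H$, and the $H$-symmetry and convexity of $\di K$ then force each such section to be the centered segment prescribed by the definition of $S_HK$ (together with $(\di K)|H=K|H$ this pins down $\di K=S_HK$). For the spherical-cylinder alternative, however, the paper does not pass through projection invariance: it re-runs the Lebesgue differentiation argument of Theorem~\ref{SteinerCompact}(i) with $i=n-1$, restricting to $x\in\relint(K|H)$ so that $K\cap C_r(x)\in{\mathcal{K}}^n_n$, which yields the length identity only for ${\mathcal{H}}^{n-1}$-almost all lines, and then upgrades to all lines by convexity and continuity. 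You instead observe that $V_n$ is strictly increasing on ${\mathcal{K}}^n_n$ (your segment argument for this is fine) and invoke Theorem~\ref{SchwarzFpre} with $F=V_n$ to upgrade invariance on $H$-symmetric spherical cylinders to projection invariance, collapsing the two hypotheses into one; this is legitimate precisely because the corollary is stated on ${\mathcal{K}}^n_n$ (on all of ${\mathcal{K}}^n$ the strictness of $V_n$, and hence this shortcut, would fail), and it mirrors how the paper itself proves Theorem~\ref{MinChar}(i) via Theorem~\ref{SchwarzF}. What the two routes buy: yours avoids repeating the measure-theoretic a.e.\ argument and the subsequent continuity step, giving a shorter, more uniform deduction from results already proved; the paper's version keeps the corollary tied directly to the compact-set Theorem~\ref{SteinerCompact} and needs neither Theorem~\ref{SchwarzFpre} nor the strict monotonicity of $V_n$, which is in keeping with its aim of exhibiting the convex-body statement as a specialization of the compact-set one.
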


\begin{proof}
Let $K\in {{\mathcal{K}}^n_n}$.  If $\di$ is assumed to be projection invariant, then by Theorem~\ref{Schwarz} with $i=n-1$, we have
\begin{equation}\label{Glines}
V_1\left((\di K)\cap G\right)=V_1(K\cap G)
\end{equation}
for all lines $G$ orthogonal to $H$. Since $\di K$ is $H$-symmetric, this yields $\di K=S_HK$.  If $\di$ is assumed to be invariant on $H$-symmetric spherical cylinders, we can follow the proof of Theorem~\ref{SteinerCompact}(i) with $i=n-1$ to conclude that (\ref{Glines}) holds for ${\mathcal{H}}^{n-1}$-almost all lines $G$ orthogonal to $H$.  (As in the proof of Theorem~\ref{Schwarz}, it is necessary to make the restriction $x\in \relint(K|H)$ in order to ensure that $K\cap C_r(x)\in {{\mathcal{K}}^n_n}$.)  Since $\di K$ and $S_HK$ are convex bodies, (\ref{Glines}) holds for all lines $G$ orthogonal to $H$ by continuity and the conclusion follows as before.
\end{proof}

For maps $\di:{{\mathcal{K}}^n_n}\rightarrow {{\mathcal{K}}^n_n}_H$, Examples~\ref{Vex2} with $j=n$, \ref{ex2nww}, and \ref{ex3nww} with $j=n$ show that none of the assumptions can be omitted.

If in Corollary~\ref{Steiner}, it is only assumed that $\di$ does not decrease volume, i.e., $V_n(\di K)\ge V_n(K)$ for all $K\in {{\mathcal{K}}^n_n}$, instead of the condition that $\di$ is volume preserving, then the proofs of Theorem~\ref{SteinerCompact}(i) and Theorem~\ref{Schwarz} show that $S_HK\subset \di K$ for all $K\in {{\mathcal{K}}^n_n}$.  However, Example~\ref{ex2nww}(ii) shows that this is not enough to conclude that $\di$ is Steiner symmetrization.  Similarly, Example~\ref{ex2nww}(i) shows that it is not enough to assume that $\di$ does not increase volume instead of preserving volume.

For $j\in \{1,\dots,n-1\}$, the $j$th intrinsic volume does not increase under Steiner symmetrization; see \cite[p.~587]{Sch93}.  But again, Example~\ref{ex2nww}(ii) shows that in Corollary~\ref{Steiner}, the volume-preserving property of $\di$ cannot be replaced by the assumption that $\di$ does not increase the $j$th intrinsic volume for $j\in \{1,\dots,n-1\}$.

\begin{cor}\label{Steinercor}
Let $H\in {\mathcal{G}}(n,n-1)$ and let $\di:{\mathcal{K}}^n\rightarrow {\mathcal{K}}^n_H$ be an $(n-1)$-symmetrization.  If $\di$ is monotonic, volume preserving, and either invariant on $H$-symmetric spherical cylinders or projection invariant, then $\di K=S_HK$ for each $K\in {\mathcal{K}}^n$ not contained in a hyperplane orthogonal to $H$.
\end{cor}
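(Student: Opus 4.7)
The plan is to split the argument into cases according to the dimension of $K$, applying Corollary~\ref{Steiner} directly in the full-dimensional case and reducing the lower-dimensional case to it by approximation. First, for $K\in\mathcal{K}^n_n$, I would note that the restriction of $\di$ to $\mathcal{K}^n_n$ actually maps into $\mathcal{K}^n_{nH}$: by volume preservation, $V_n(\di K)=V_n(K)>0$, so $\di K$ is full-dimensional. This restriction inherits all the hypotheses of Corollary~\ref{Steiner}, which therefore yields $\di K=S_HK$ for every $K\in\mathcal{K}^n_n$.

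Next, consider $K\in\mathcal{K}^n\setminus\mathcal{K}^n_n$ not contained in any hyperplane orthogonal to $H$. A short dimension count shows this forces $\dim K=n-1$ with the direction of $\aff K$ transverse to $H^\perp$, since any compact convex set of dimension at most $n-2$ is contained in $\aff K+H^\perp$, which has dimension at most $n-1$ and hence lies in some hyperplane whose direction contains $H^\perp$. Consequently, every line orthogonal to $H$ meets $K$ in at most one point, so $S_HK=K|H$.

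To establish $\di K\subset S_HK$, I would approximate from above by $K_\epsilon=K+\epsilon B^n\in\mathcal{K}^n_n$; the first step gives $\di K_\epsilon=S_HK_\epsilon$, and monotonicity yields $\di K\subset S_HK_\epsilon$ for every $\epsilon>0$. One then verifies the identity $\bigcap_{\epsilon>0}S_HK_\epsilon=S_HK$: if $q$ lies in the intersection, then $q|H\in\bigcap_\epsilon K_\epsilon|H=K|H$ and the distance from $q$ to $H$ is bounded by $V_1(K_\epsilon\cap(H^\perp+q|H))/2$ for every $\epsilon$, a quantity tending to $V_1(K\cap(H^\perp+q|H))/2=0$ as $\epsilon\to 0$, forcing $q\in K|H=S_HK$. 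For the reverse inclusion, the plan is to use singletons: for each $p\in K$, the closed ball $\{p\}+\epsilon B^n$ has Steiner symmetral $\{p|H\}+\epsilon B^n$ (a ball of the same radius centered on $H$), so the first step and monotonicity give $\di\{p\}\subset\{p|H\}+\epsilon B^n$ for every $\epsilon$, forcing $\di\{p\}=\{p|H\}$; then $\{p\}\subset K$ yields $\{p|H\}\subset\di K$, and taking the union over $p\in K$ produces $K|H\subset\di K$. Combining gives $\di K=K|H=S_HK$.

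The main subtlety is the approximation identity $\bigcap_{\epsilon>0}S_HK_\epsilon=S_HK$, which ultimately reduces to the elementary continuity $V_1(K_\epsilon\cap\ell)\to V_1(K\cap\ell)$ as $\epsilon\to 0$ for any fixed line $\ell$ orthogonal to $H$; all other ingredients are routine consequences of monotonicity and the already established Corollary~\ref{Steiner}.
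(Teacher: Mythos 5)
Your proposal is correct and follows essentially the same route as the paper: apply Corollary~\ref{Steiner} to the full-dimensional case, then handle lower-dimensional $K$ via the outer approximation $K+\ee B^n$ together with monotonicity, noting that $S_HK=K|H$ when $K$ is not contained in a hyperplane orthogonal to $H$. The only (sound) variation is that you obtain the reverse inclusion $K|H\subset\di K$ directly from the computation $\di\{p\}=\{p|H\}$ for $p\in K$, treating both hypotheses in the disjunction uniformly, where the paper instead uses projection invariance in one branch and a contradiction argument with a subset $L\subset K$ in the other.
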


\begin{proof}
Since $\di$ is volume preserving, we have $\di:{\mathcal{K}}^n_n\rightarrow {\mathcal{K}}^n_{nH}$ and hence by Corollary~\ref{Steiner}, the result holds when $K$ is a convex body.  Suppose that $\dim K<n$. For each $\ee>0$, $K+\ee B^n$ is a convex body, so $\di(K+\ee B^n)=S_H(K+\ee B^n)$.  Therefore, if $K$ is not contained in a hyperplane orthogonal to $H$, the monotonicity of $\di$ yields
$$\di K\subset \di(K+\ee B^n)=S_H(K+\ee B^n)\rightarrow K|H=S_HK$$
in the Hausdorff metric as $\ee\rightarrow 0+$.  If $\di$ is projection invariant, we obtain $\di K=S_HK$ immediately.  Otherwise, we know only that $\di K\subset K|H=S_HK$. Suppose that $(K|H)\setminus \di K\neq \emptyset$.  Choose $L\in {\mathcal{K}}^n$ with $L\subset K$ and $L|H\subset (K|H)\setminus \di K$. Since the previous argument shows that $\di L\subset L|H$, we obtain $\di L\not\subset \di K$, contradicting the monotonicity of $\di$.  Therefore $\di K= K|H=S_HK$ as before.
\end{proof}

\begin{ex}\label{ex4}
{\em Let $H\in {\mathcal{G}}(n,n-1)$ and for all $K\in {{\mathcal{K}}^n}$, let $\di K=S_HK$ if $K$ is not contained in a hyperplane orthogonal to $H$ and $\di K=K|H$ otherwise. Then $\di:{{\mathcal{K}}^n}\rightarrow {\mathcal{K}}^n_H$ is monotonic, volume preserving, idempotent, invariant on $H$-symmetric spherical cylinders, and projection invariant. However, $\di$ is not invariant on $H$-symmetric sets and is therefore not Steiner symmetrization; indeed, if $u\in S^{n-1}$ is orthogonal to $H$ and $K=[-au,au]$ for $a>0$, then $S_HK=[-au,au]\neq \{o\}=K|H=\di K$.
\qed}
\end{ex}

Returning to Theorem~\ref{Schwarz}, we note that the hypotheses stated there are not enough to conclude that $\di$ is Schwarz symmetrization.  This is shown by Example~\ref{ex6} with $j=n-i$ and $L\neq B^n\cap H^{\perp}$, as well as by the following different example, which together suggest that it may be difficult to find a nontrivial characterization of Schwarz symmetrization.

\begin{ex}\label{ex7}
{\em Let $i\in \{1,\dots,n-2\}$ and let $H\in {\mathcal{G}}(n,i)$.  Choose mutually orthogonal subspaces $H_j\in {\mathcal{G}}(n,n-1)$, $j=1,\dots,n-i$, such that $H=\cap_{j=1}^{n-i}H_j$.  For $K\in {{\mathcal{K}}^n_n}$, define $\di K=(S_{H_{1}}\circ S_{H_{2}}\circ\cdots\circ S_{H_{n-i}})K$.  Since $\di K$ is $H_j$-symmetric, $j=1,\dots,n-i$, it is also $H$-symmetric and hence $\di:{\mathcal{K}}^n_n\rightarrow {\mathcal{K}}^n_{nH}$ is an $i$-symmetrization.  Moreover, $\di$ is strictly monotonic, volume preserving, invariant on $H$-symmetric sets, and projection invariant.  However, $\di$ is not Schwarz symmetrization.
\qed}
\end{ex}

\section{Open problems}\label{problems}

In the problems below, we assume that $H\in {\mathcal{G}}(n,i)$, $i\in\{0,\dots,n-1\}$, and ${\mathcal{B}}={\mathcal{K}}^n$ or ${\mathcal{B}}={\mathcal{K}}^n_n$.

\begin{prob}\label{prob0}
Let $i=n-1$ and let $j\in \{2,\dots,n-1\}$.  Is there a symmetrization ${\di}:{\mathcal{B}}\rightarrow{\mathcal{B}}_H$ that is monotonic, $V_j$-preserving, and either invariant on $H$-symmetric spherical cylinders or projection invariant?
\end{prob}

In particular, taking $j=n-1$ in Problem~\ref{prob0}, is there a symmetrization on compact convex sets that behaves like Minkowski or Steiner symmetrization but which preserves surface area instead of mean width or volume?  Variants of Problem~\ref{prob0} may be posed, for example insisting that $\di$ be invariant on $H$-symmetric sets and extending the question to $i\in \{0,\dots,n-1\}$.

\begin{prob}\label{prob1}
Let $i\in \{2,\dots,n-2\}$.  Is there a symmetrization ${\di}:{\mathcal{B}}\rightarrow{\mathcal{B}}_H$ that is strictly monotonic, idempotent, and invariant on $H$-symmetric spherical cylinders, but not projection invariant?
\end{prob}

We remark that if such a $\di$ exists, it cannot be invariant under translations orthogonal to $H$ of $H$-symmetric sets.  Indeed, suppose it is and let $K\in {\mathcal{B}}$.  Then by Theorem~\ref{IdemGab2}, $\di K\subset O_HK$, and since $O_H$ is projection invariant, we obtain $(\di K)|H\subset (O_HK)|H= K|H$. The second paragraph of the proof of Theorem~\ref{Invariance} shows, assuming only monotonicity and invariance on $H$-symmetric spherical cylinders, that the reverse inclusion $K|H\subset (\di K)|H$ holds, so $\di$ is projection invariant.

\begin{prob}\label{prob2}
Let $i\in \{2,\dots,n-2\}$. Is there a strictly increasing set function $F:{\mathcal{B}}\to [0,\infty)$ and a symmetrization ${\di}:{\mathcal{B}}\rightarrow{\mathcal{B}}_H$ that is monotonic, $F$-preserving, and invariant on $H$-symmetric spherical cylinders, but not projection invariant?
\end{prob}

\begin{prob}\label{prob3}
Let $i\in \{0,\dots,n-2\}$.  Is there a symmetrization ${\di}:{\mathcal{B}}\rightarrow{\mathcal{B}}_H$ that is monotonic, mean width preserving, and invariant on $H$-symmetric sets, but not invariant under translations orthogonal to $H$ of $H$-symmetric sets?
\end{prob}

\bigskip

\end{document}